\newtheorem{theorem}{Theorem}[section]
\newtheorem{lemma}[theorem]{Lemma}
\newtheorem{proposition}[theorem]{Proposition}
\theoremstyle{definition} \newtheorem{definition}[theorem]{Definition}
\newtheorem{remark}[theorem]{Remark}
\begin{document}

\title[HMAE and Canonical Tubular Neighbourhoods in K\"ahler Geometry]{Homogeneous Monge-Amp\`ere Equations and Canonical Tubular Neighbourhoods in K\"ahler Geometry}

\author{Julius  Ross and David Witt Nystr\"om}
\maketitle

\begin{abstract}
  We prove the existence of canonical tubular neighbourhoods around complex submanifolds of K\"ahler manifolds that are adapted to both the holomorphic and symplectic structure. This is done by solving the complex Homogeneous Monge-Amp\`ere equation on the deformation to the normal cone of the submanifold. We use this to establish local regularity for global weak solutions, giving local smoothness to the (weak) geodesic ray in the space of (weak) K\"ahler potentials associated to a given complex submanifold. We also use it to get an optimal regularity result for naturally defined plurisubharmonic envelopes and for the boundaries of their associated equilibrium sets. 
\end{abstract}

\newcommand{\dist}{\operatorname{dist}}
\newcommand{\PSH}{PSH} 
\newcommand{\Sing}{\operatorname{Sing}}
\newcommand{\vol}{\operatorname{vol}}
\newcommand{\id}{\operatorname{id}}
\newcommand{\Conv}{\operatorname{Conv}}
\newcommand{\ord}{\operatorname{ord}}
\newcommand{\im}{\operatorname{im}}
\makeatletter
\newcommand{\Vast}{\bBigg@{5}}
\makeatother

\newcommand{\customV}{\Vast\}V}
\setlength{\parskip}{2pt}
\setlength{\parindent}{0pt}

\section{Introduction}

\subsection{Canonical tubular neighbourhoods}

Tubular neighbourhoods are used in differential and symplectic geometry to reveal the structure around submanifolds. As is well known, a complex submanifold of a complex manifold will in general not admit a tubular neighbourhood that is holomorphic. However, we will see in this paper that any compact complex submanifold of a K\"ahler manifold has a canonical smooth tubular neighbourhood, which in general will not be holomorphic, but nevertheless has properties that make it adapted to both the holomorphic and symplectic structure.   

To state our results, let $p\colon N_Y\to Y$ be the normal bundle of a complex submanifold $Y$ of a complex manifold $X$ and $\iota\colon Y\to N_Y$ be the inclusion of $Y$ as the zero section. Note that $N_Y$ is a holomorphic vector bundle that admits a holomorphic $S^1$-action obtained by rotating the fibres. By a smooth tubular neighbourhood of $Y\subset X$ we mean a diffeomorphism $T\colon U\to \tilde{U} \subset X$ between a neighbourhood $U$ of $\iota(Y)\subset N_Y$ and a neighbourhood $\tilde{U}$ of $Y\subset X$ such that $$T\circ \iota = \id_Y.$$

\begin{theorem}\label{thm:tubularmain}
Suppose that $Y$ is a complex submanifold of a K\"ahler manifold $(X,\omega)$.    Then there exists a smooth tubular neighbourhood $$T\colon U\to \tilde{U} \subset X$$ of $Y\subset X$ with the following properties:
\begin{enumerate}
\item $U$ is $S^1$-invariant and the pullback $T^*\omega$ is an $S^1$-invariant K\"ahler form on $U$.
\item For any $u\in U$ the function $f_u: S^1 \to X,$ given by $f_u(e^{i\theta}):=T(e^{i\theta}u)$ extends to a holomorphic function $F_u\colon D \to X$ from the unit disc $D\subset \mathbb C$ such that 
$$F_u(0) =p(u) \text{ and } \left[{DF_u}|_0(\frac{\partial}{\partial x}) \right]= u,$$ i.e. the holomorphic disc $F_u$ is centered at $p(u)$ and points in the normal direction determined by $u$. 
\end{enumerate}
When $Y$ is compact there is a canonical choice of $T$. In general, the germ of the tubular neighbourhood we construct is local and canonical, in the sense that at any point $\iota(p)\in \iota(Y)$ it depends only on the local structure of $(Y,\omega_{|Y})\subset (X,\omega)$ around $p\in Y$. 
\end{theorem}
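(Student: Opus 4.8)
The plan is to obtain $T$ by solving a Dirichlet problem for the complex homogeneous Monge--Amp\`ere equation (HMAE) on the deformation to the normal cone of $Y$, and then to read the tubular neighbourhood off from the associated Monge--Amp\`ere foliation. So I would begin by setting up $p\colon\mathcal X\to\mathbb C$, the deformation to the normal cone of $Y\subset X$: $\mathcal X$ is the blow-up of $X\times\mathbb C$ along $Y\times\{0\}$ with the strict transform of $X\times\{0\}$ removed, so $\mathcal X_t:=p^{-1}(t)\cong X$ for $t\neq 0$ while the central fibre $\mathcal X_0$ is the total space of $N_Y$, the strict transform of $Y\times\mathbb C$ identifying $\iota(Y)$ with the zero section of $\mathcal X_0$. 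It carries a holomorphic $\mathbb C^*$-action lifting multiplication on the base, trivial on each $\mathcal X_t\cong X$ with $t\neq 0$, restricting on $\mathcal X_0=N_Y$ to fibrewise scaling, and with $\iota(Y)$ as its $S^1$-fixed locus; the pullback of $\omega$ over $p^{-1}(\partial D)$ provides boundary data.

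The main obstacle---the analytic heart, which I expect to occupy the body of the paper---is then to produce an $S^1$-invariant function $\Phi$ near $\mathcal X_0$ inside $p^{-1}(\overline D)$ that is a potential for a closed positive current, solves $(dd^c\Phi)^{n+1}=0$, has the prescribed boundary values, and, crucially, is \emph{smooth in a neighbourhood of $\mathcal X_0$}, modelled there on the explicit solution of the linearised problem, with $dd^c\Phi$ strictly positive along $\mathcal X_0$; on $\mathcal X_t\cong X$ ($t\neq 0$) one writes $dd^c\Phi|_{\mathcal X_t}=\omega+dd^c\phi_t$. This is precisely the local regularity of the otherwise merely weak geodesic ray promised in the abstract, and it is the hard point, since weak solutions of the HMAE are not smooth in general---what must be exploited is that the geometry of the degeneration forces regularity near the central fibre.

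Granting such a $\Phi$, the rest is geometric extraction. Near $\mathcal X_0$, $dd^c\Phi$ is smooth of constant rank $n$, so by the standard correspondence between HMAE solutions and holomorphic discs (cf.\ Semmes, Donaldson) its kernel integrates to a complex foliation with holomorphic leaves, which is $S^1$-invariant because $\Phi$ is. Since $dd^c\Phi|_{\mathcal X_0}$ is K\"ahler, the leaf through a point $u\in\mathcal X_0$ near $\iota(Y)$ meets $\mathcal X_0$ transversally at $u$; reparametrising it by $p$ (shrinking the disc if needed) gives a holomorphic section $G_u\colon D\to\mathcal X$ with $p\circ G_u=\mathrm{id}_D$, $G_u(0)=u$, depending smoothly on $u$, and the $S^1$-invariance of the foliation gives an equivariance relating $G_{e^{i\theta}u}$ to $G_u$ precomposed with a rotation of $D$. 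I would then fix a small $r_0\in(0,1)$ and set $T(u):=\mathrm{pr}_X\bigl(G_{u/r_0}(r_0)\bigr)$ on an $S^1$-invariant neighbourhood $U$ of $\iota(Y)$; since near $\iota(Y)$ the $G_u$ are $C^1$-close to those of the explicit model, $T$ is a diffeomorphism onto a neighbourhood $V$ of $Y$. The leaf through a zero-section point is $S^1$-fixed, hence (by a short uniqueness argument in local coordinates) the constant horizontal section $\zeta\mapsto(y,\zeta)$, which forces $T\circ\iota=\mathrm{id}_Y$. Setting $F_u(\zeta):=\mathrm{pr}_X\bigl(G_{u/r_0}(r_0\zeta)\bigr)$, the equivariance yields $f_u(e^{i\theta})=T(e^{i\theta}u)=F_u(e^{i\theta})$, so $f_u$ extends to the holomorphic disc $F_u\colon D\to X$, with $F_u(0)=\mathrm{pr}_X(G_{u/r_0}(0))=\pi(u)$; and the factor $1/r_0$ is precisely what makes $[DF_u|_0(\partial/\partial x)]=u$, giving (2). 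For (1), flowing along leaves from $\mathcal X_0$ to $\mathcal X_{r_0}$ is a diffeomorphism $S_{r_0}$ with $S_{r_0}^*\bigl(dd^c\Phi|_{\mathcal X_{r_0}}\bigr)=dd^c\Phi|_{\mathcal X_0}$, because $dd^c\Phi$ is closed and kills the leaf directions; since $T$ is $\mathrm{pr}_X\circ S_{r_0}$ composed with a fibrewise dilation, $T^*\omega$ differs from the manifestly $S^1$-invariant form $dd^c(\Phi|_{\mathcal X_0})$ (pulled back by the dilation) by $dd^c$ of $\phi_{r_0}\circ T$, and a direct computation using the equivariance of the $G_u$ shows this correction is $S^1$-invariant as well; positivity of $T^*\omega$ then follows since it is $C^0$-close to the model K\"ahler form.

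Finally, uniqueness of the HMAE solution for given boundary data (comparison principle) makes $T$ canonical when $Y$ is compact, taking the canonical boundary data $\omega$ on $\mathcal X_{\partial D}\cong X$. In general, over a neighbourhood of $p\in Y$ both the deformation to the normal cone near $\mathcal X_0$ and---by the localisation near the central fibre to be established in the body---the germ of $\Phi$ there depend only on the germ of $(Y,\omega|_Y)\subset(X,\omega)$ at $p$, hence so does the germ of $T$ at $\iota(p)$. I expect every step after the first to be comparatively routine; the real work is producing a $\Phi$ that is smooth near the central fibre.
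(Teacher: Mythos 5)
Your geometric extraction of the tubular neighbourhood from a regular HMAE solution on the deformation to the normal cone is essentially the paper's argument: the Monge--Amp\`ere foliation, the $S^1$-equivariance of its leaves, the Cartan-formula computation showing that flowing along leaves pulls $\Omega|_{V_\tau}$ back to $\Omega|_{V_0}$ (which gives property (1) directly, since $\Omega$ restricted to the central fibre is K\"ahler --- you do not need the $C^0$-closeness argument you sketch), and the identification of the leaf derivative at the exceptional divisor with the normal vector $u$ (which gives property (2); your $r_0$-rescaling is harmless but unnecessary, as one can flow all the way to $\tau=1$). However, you have explicitly deferred the entire analytic content --- the existence of an $S^1$-invariant $\Phi$, smooth near the central fibre, solving the HMAE with boundary data induced by $\omega$ --- to ``the body of the paper.'' That existence statement \emph{is} the theorem; without an argument for it the proposal is not a proof but a reduction of the theorem to itself.

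Concretely, the missing ideas are these. First, one works locally: for each $p\in Y$ there is a biholomorphism $\Gamma\colon B_\delta\times D\to V\subset\mathcal N_Y$ onto a neighbourhood of the proper transform $\mathcal D_p$ of $\{p\}\times D$, under which the HMAE near $\mathcal D_p$ becomes an HMAE on a \emph{product} $B_\delta\times D$ with twisted boundary data $\omega_\tau=\rho(\tau)^*\omega_f$, where $\rho$ rotates the coordinates transverse to $Y$. Second, by choosing coordinates in which the local potential is $|z|^2+O(|z|^3)$ and gluing with $(1+a)|z|^2-2b$ via a regularized maximum, this boundary data can be made $C^2$-close to the $\rho$-invariant function $|z|^2$, whose associated solution is the trivial product one; Donaldson's openness theorem then produces a regular solution near $\mathcal D_p$. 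Third, these local solutions must be patched over $Y$, which requires a local uniqueness statement (two regular solutions with the same boundary data agree near a common complete leaf, proved by the maximum principle applied along leaves of each foliation) that does not follow from the usual comparison principle, since no control at $\partial V$ is available. Note also that your framing of the hard step as ``regularity of the weak geodesic ray near the central fibre'' inverts the paper's logic: the regular solution is built directly by perturbation, and only afterwards is it shown to coincide with the weak Perron solution. Finally, your appeal to the comparison principle for canonicity only yields uniqueness of the \emph{germ}; selecting a canonical domain $U$ when $Y$ is compact requires the additional device of the Hamiltonian $H=L_{J\zeta}\Phi$ and the maximal ``canonical radius'' among complete solutions.
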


Of course the map $T$ need not be holomorphic, but the two properties above describe ways in which it interacts with the holomorphic structure (for example the statement that $T^*\omega$ is K\"ahler is not immediate given that $T$ is not holomorphic).  Even in cases where holomorphic tubular neighbourhoods do exist they will not in general have this property so what is produced above will be different. In fact the existence of such a tubular neighbourhood is highly non-trivial even when the submanifold is a single point in $\mathbb{C}$. 

In the very special case that there exists a holomorphic $S^1$-action on $X$ that fixes $Y$ pointwise inducing the usual action on $N_Y$ obtained by rotating the fibers there is a natural holomorphic tubular neighbourhood of $Y$ which encodes this symmetry. If in addition this action preserves $\omega$ then the germ of our $T$ will agree with this holomorphic tubular neighbourhood. Since our construction is local this is also true locally around any point $p\in Y$ (see Proposition \ref{prop:hologerm}).

We point out that there is a standard way to produce tubular neighbourhoods that only satisfy the first property in the above theorem.   In fact, for any choice of symplectic form $\tilde{\omega}$  on a neighbourhood $U$ of $\iota(Y)$ such that $\iota^*\tilde{\omega} = \omega_{|Y}$ there exists, after shrinking $U$ if necessary, a tubular neighbourhood $T\colon U\to X$ such that $T^*\omega = \tilde{\omega}$ (this follows by using an arbitrary tubular neighbourhood to pull back $\omega$ and then applying the relative version of Moser's Theorem \cite[Theorem 7.4]{deSilva}).  So by choosing such an $\tilde{\omega}$ that is K\"ahler and $S^1$-invariant we get a tubular neighbourhood with property (1). This will of course depend on several choices, and from the point of view of K\"ahler geometry it is not clear why any particular choice is more natural than any other. 

On the other hand it is easy to construct a tubular neighbourhood with property (2) from Theorem \ref{thm:tubularmain}, starting with any smooth foliation by holomorphic curves of some neighbourhood of the exceptional divisor in the blowup of $X$ along $Y$, which is transverse to the exceptional divisor.   But in general one has then lost control over the symplectic structure. Thus the significance of the above theorem is that there exist tubular neighbourhoods with both properties simultaneously, which as we shall see can be made canonical.

\subsection{HMAE on the deformation to the normal cone of Y}

Really the main result of this paper is a proof of existence of regular solutions to a Dirichlet problem for a certain homogeneous Monge-Amp\`ere equation (HMAE), and the tubular neighbourhood of Theorem \ref{thm:tubularmain} will then be constructed using the associated foliation.  

Recall the \emph{deformation to the normal cone} $\mathcal N_Y$ of $Y$ in $X$ is the blowup of $X\times D$ along $Y \times \{0\}$, with projection map
$$\pi\colon \mathcal N_Y\to X\times D.$$ 
We let $\pi_X\colon \mathcal N_Y\to X$  and $\pi_D\colon \mathcal N_Y\to D$ denote the composition of $\pi$ with the projection onto the factors $X$ and $D$ of $X\times D$ respectively.  The fiber $\pi_D^{-1}(0)$ of $\mathcal N_Y$ over $0\in D$ has two components, one which is isomorphic to the blowup $Bl_Y(X)$ of $X$ along $Y$ and the other being the exceptional divisor $E$ which is isomorphic to the projective completion $\mathbb P(N_Y\oplus\mathbb C)$ of the normal bundle $N_Y$.   Since $N_Y$ embeds in $\mathbb P(N_Y\oplus\Bbb C)$ by $v\mapsto [v,1]$ we can, and shall,  identify $N_Y$ with its image in $\mathcal N_Y$.  Thus $\mathcal N_Y$ provides us with a space which contains both our original manifold $X$ as well as the normal bundle $N_Y$ (sitting inside its projective completion), and we will use this to construct a tubular neighbourhood as a kind of flow given by a foliation associated to a solution to a certain Dirichlet problem that we describe next.

  Letting $\mathcal Y\subset \mathcal N_Y$ denote the proper transform of $Y\times D$  we have that $\iota(Y)= E\cap \mathcal Y$.  Similarly if $p\in Y$ we let $\mathcal D_p$ denote the proper transform of $\{p\}\times D$, so $\iota(p) = E\cap \mathcal D_p$.    The holomorphic $S^1$-action on $X\times D$ given by 
\begin{equation} \label{circleaction}
e^{i\theta}\cdot (x,\tau) =(x,e^{-i\theta}\tau)
\end{equation} 
lifts to $\mathcal N_Y$; it preserves $N_Y$ and induces the action on $N_Y$ obtained by rotating the fibers.

\begin{theorem} \label{defconethm}
There exists an $S^1$-invariant neighbourhood $V$ of $\mathcal Y$ in $\mathcal N_Y$ and $S^1$-invariant regular solution $\Omega$ to the homogeneous Monge-Amp\`ere equation with boundary data induced by $\omega$.  Here, by a regular solution we mean that:
\begin{enumerate}
\item $\Omega$ is a smooth real closed  $(1,1)$ form on $V$,
\item $\Omega_{|V_{\tau}}$ is K\"ahler for all $\tau\in D$ where $V_{\tau}:= \pi^{-1}_D(\tau)\cap V$,
\item $\Omega_{|V_\tau}=\pi_X^*\omega_{|V_\tau}$ for all $\tau\in S^1$, and
\item $\Omega^{n+1}=0$ on $V$.
\end{enumerate}
The solution $\Omega$ is cohomologous to $\pi_X^*\omega,$ i.e. there exists a (unique) smooth real valued $S^1$-invariant function $\Phi$ on $V$ which is zero on $V_\tau$ for $\tau\in S^1$ and such that $$\Omega=\pi_X^*\omega|_V+dd^c\Phi.$$
When $Y$ is compact there is a canonical choice of $(V,\Omega)$. In general the germ around $\mathcal Y$ of any such $S^1$-invariant regular solution is unique.
\end{theorem}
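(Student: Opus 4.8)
The plan is to construct the solution $\Omega$ by solving the HMAE on the deformation to the normal cone, exploiting the $S^1$-symmetry to reduce the problem to a more tractable one. First I would set up the Dirichlet problem precisely: writing $\Omega = \pi_X^*\omega + dd^c\Phi$ for an $S^1$-invariant function $\Phi$, the conditions become that $\Phi$ vanishes on $V_\tau$ for $\tau \in S^1$, that $\pi_X^*\omega + dd^c\Phi$ is positive on each fibre $V_\tau$, and that the full Monge-Amp\`ere measure $(\pi_X^*\omega + dd^c\Phi)^{n+1}$ vanishes. The key structural observation is that over a punctured disc (away from $\tau = 0$) the deformation to the normal cone is just $X \times (D \setminus \{0\})$, so the HMAE there, together with the $S^1$-invariance, is equivalent to a geodesic equation in the space of K\"ahler potentials on $X$ issuing from $\omega$; the subtlety and the whole content of the theorem is the behaviour as $\tau \to 0$, i.e.\ the extension of the solution smoothly across the central fibre $Bl_Y(X) \cup E$ near $\mathcal Y$.

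Second, I would establish existence of a \emph{weak} solution by the standard Perron/envelope method: take $\Phi$ to be the upper envelope of all $S^1$-invariant $\pi_X^*\omega$-psh functions on $V$ that are $\le 0$ on $\bigcup_{\tau\in S^1} V_\tau$ and suitably bounded. This produces a bounded solution with $(\pi_X^*\omega + dd^c\Phi)^{n+1} = 0$ in the pluripotential sense; the $S^1$-invariance of the data guarantees the envelope is $S^1$-invariant. The genuinely hard step — and the one I expect to be the main obstacle — is upgrading this weak solution to a \emph{smooth} one near $\mathcal Y$ and showing each fibrewise restriction stays strictly K\"ahler (not merely semipositive). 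I would attack this locally: around a point $p \in Y$, choose holomorphic coordinates splitting the normal and tangent directions, in which to leading order the geometry looks like the model case where $X$ carries a holomorphic fibre-rotating $S^1$-action fixing $Y$ (cf.\ the remark before Proposition \ref{prop:hologerm}); there the solution is explicit. One then treats the general $(X,\omega)$ as a perturbation of this model and runs an implicit-function-theorem / a priori estimate argument — the degeneracy of the linearised operator (it is the Laplacian of a \emph{degenerate} metric, since $\Omega^{n+1}=0$) is exactly what makes this delicate, and is presumably handled via the foliation by the holomorphic discs $F_u$ (equivalently, the Monge-Amp\`ere foliation), pushing forward to a non-degenerate problem on a quotient.

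Third, for the cohomological normalisation I would note that since $\Phi$ is globally defined, bounded, and $S^1$-invariant, and $\Omega - \pi_X^*\omega = dd^c\Phi$ is automatic by construction; uniqueness of $\Phi$ with the stated vanishing follows from the maximum principle applied fibrewise on the $V_\tau$, $\tau \in S^1$. Finally, for canonicity and uniqueness of the germ: when $Y$ is compact the envelope above is manifestly canonical (it is defined purely in terms of $\omega$ and the complex structure with no auxiliary choices), and in general two $S^1$-invariant regular solutions agreeing to infinite order — in fact agreeing on the relevant boundary — must coincide near $\mathcal Y$ by a comparison-principle argument: their difference is $dd^c$ of an $S^1$-invariant function that is $\le 0$ and $\ge 0$ on the boundary fibres and fibrewise $\omega$-subharmonic for a degenerate metric, forcing it to vanish. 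The locality of the construction — dependence only on the germ of $(Y,\omega|_Y)\subset(X,\omega)$ at $p$ — is inherited from the fact that the envelope at a point is controlled by arbitrarily small neighbourhoods, which I would make precise using a cutoff/barrier argument to localise the Perron family.
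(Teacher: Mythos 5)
Your overall shape (local model near $p\in Y$, perturbation off the model, then uniqueness and gluing) is in the right spirit, but two of your central steps would not go through as written. First, the route ``construct a weak Perron envelope, then upgrade it to a smooth solution by an implicit-function-theorem argument'' puts the entire difficulty into the regularity upgrade and then does not supply it: the linearisation of the HMAE at a solution with $\Omega^{n+1}=0$ is degenerate, so the naive IFT fails, and your suggested fix (``pushing forward to a non-degenerate problem on a quotient'') is not a mechanism that produces the estimate. The paper never regularises a weak solution at this stage. Instead it invokes Donaldson's openness theorem, whose proof replaces the degenerate PDE by a Fredholm problem for holomorphic discs with boundary on LS-submanifolds of an auxiliary bundle $W$; the perturbative content is packaged there. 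Moreover, to be able to apply that theorem one must quantitatively arrange the boundary data: after the coordinate change $\Gamma(z,w)=(f(wz',z''),w)$ identifying a neighbourhood of $\mathcal D_p$ with $B_\delta\times D$, the boundary data becomes the non-invariant family $\rho(\tau)^*\omega_f$, and one needs its potential to be $\epsilon$-close in $C^2$ to the $\rho$-invariant model $|z|^2$ \emph{and} equal to $|z|^2$ near $\partial B_1$ (so the non-compact case reduces to $\mathbb P^n$). This requires the normal form $\phi=|z|^2+O(|z|^3)$ plus an explicit regularised-max gluing with $(1+a)|z|^2-2b$ on a shrinking ball (Lemma \ref{lemma:coordinates}, Lemma \ref{lem:regmax}, Proposition \ref{prop:approxharmonic}); your phrase ``to leading order the geometry looks like the model case'' names the idea but omits the construction that makes the smallness hypothesis actually hold.

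Second, your uniqueness argument is based on comparing two solutions on the ``boundary fibres'' $V_\tau$, $\tau\in S^1$, and concluding by a fibrewise comparison principle. This fails because the solutions live on an open neighbourhood $V$ of $\mathcal Y$ whose lateral boundary $\partial V$ is completely uncontrolled: two candidate solutions need not be comparable there, so no global maximum principle on $V$ applies. The paper's Proposition \ref{prop:localuniq} (whose point, as remarked there, is precisely that it uses no information near $\partial V$) instead runs the maximum principle \emph{along each complete leaf} of the Monge--Amp\`ere foliation: a leaf is a disc over $D$ whose boundary lies over $S^1$ where the two potentials agree, one potential is harmonic along the leaf and the other subharmonic, and symmetrising over the two foliations forces equality near $\mathcal D_p$. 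This leaf-wise argument is also what makes the patching of the local solutions (Lemma \ref{lem:locpatch}) and the later definition of the canonical solution work; note finally that the canonical choice for compact $Y$ is not the Perron envelope but the maximal complete solution singled out by the Hamiltonian $H=L_{J\zeta}\Phi$ and the canonical radius, which is a different object from the one you propose.
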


In the above theorem $V$ should be understood as not meeting the singular locus of the fibre of $\pi_D^{-1}(0)$ and hence $V$ as well as $V_{\tau}$ for $\tau\in D$ are smooth.  We observe that since the central fibre of $\mathcal N_Y$ is of a different topological type to the general fibre, there can never be a regular solution defined on all of $\mathcal N_Y$, so in this sense a local result such as Theorem \ref{defconethm} is the best one could hope for. The uniqueness part of Theorem \ref{defconethm} is also novel since it does not involve any hypothesis on the behaviour of the solutions near the boundary of $V$.  


We explain briefly how such an $\Omega$ gives rise to a tubular neighbourhood.  Since the work of Bedford-Kalka \cite{BedfordKalka} it has been known that regular solutions to the HMAE generate associated \emph{Monge-Amp\`ere foliations} by holomorphic curves as follows: the kernel of the form $\Omega$ defines an  integrable distribution of complex lines in the tangent bundle of $V$.  By Frobenius' theorem it induces a foliation by holomorphic curves, which because of property (1) are transverse to the fibers $V_{\tau}$. In the case under consideration, $\Omega$ is $S^1$-invariant and hence so is the foliation (see Figure 1). 

\begin{figure}[htb]
	\centering
\scalebox{.7}{\input{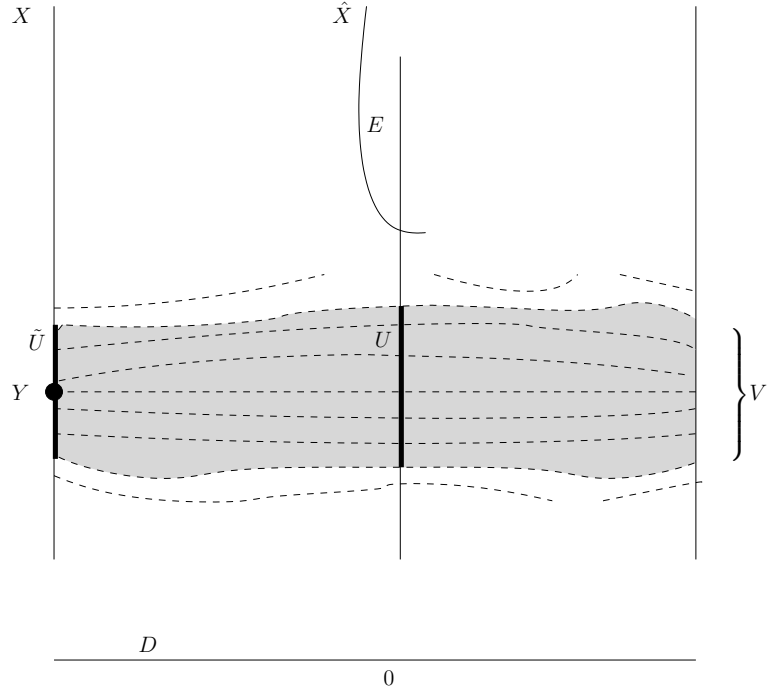}}
	\caption{Local regularity of HMAE in neighbourhood of the proper transform of $Y\times D$.  The dotted lines represent the leaves of the Monge-Amp\'ere foliation associated to a regular solution $\Omega$ of the HMAE.    The tubular neighbourhood $T:U\to\tilde{U}$ is obtained as the flow along these leaves.}
	\label{fig:blowup}
\end{figure}

We shall say that a leaf of the foliation is \emph{complete} if it covers the base $D$ (i.e.\ the restriction to this leaf of the projection to $D$ is a surjection onto $D$). From the $S^1$-invariance it follows that $\Omega|_{\mathcal D_p}=0$ for all $p\in Y$, thus each $\mathcal D_p$ is a complete leaf of the foliation. By continuity we can then find an $S^1$-invariant neighbourhood of $\mathcal Y$ consisting solely of complete leaves that pass through $N_Y\subset E$, so after shrinking $V$ we can assume that $V$ is foliated by such leaves. Note that $U:=V_0= V\cap \pi_D^{-1}(0)$ is an $S^1$-invariant neighbourhood of $\iota(Y)$ in $N_Y$.   By flowing along the leaves of this foliation we get for each $\tau\in D$ a diffeomorphism $T_{\tau}:U \to V_{\tau}$ which remarkably is a symplectomorphism, i.e. 
\begin{equation} \label{eq:pullbacknew}
T^*_{\tau}(\Omega_{|V_{\tau}})=\Omega_{|V_0}.
\end{equation}
(see \cite[Lemma 1]{Donaldson} or \cite{Semmes}). The map $T_1\colon U\to V_1$ is the desired tubular neighbourhood, and thanks to (\ref{eq:pullbacknew}) it has property (1) of Theorem \ref{thm:tubularmain}, while the existence of the holomorphic leaves assures that it has property (2) (the details are provided in Section \ref{sec:tubular}).

We now turn to making our uniqueness statement more precise.

\begin{definition}
We say that a solution $(V,\Omega)$ to the HMAE is \emph{complete} if $V$ is foliated by complete leaves of the Monge-Amp\`ere foliation, $V_0\subseteq N_Y$ and whenever $u\in V_0$ then $\tau u\in V_0$ for all $\tau \in D$.
\end{definition}

Given a regular solution $(V,\Omega)$ as in Theorem \ref{defconethm} we get a complete regular solution by shrinking $V$ if necessary.  

Let $\zeta$ denote the vector field which generates the $S^1$-action on $V$. From the fact that $\Omega$ is cohomologous to $\pi_X^*\omega$ it follows that the $S^1$-action is Hamiltonian, in the sense described in the next theorem. 

\begin{theorem}\label{thm:ham}
Let $(V,\Omega)$ be a regular solution to the HMAE as in Theorem \ref{defconethm}. Then the function $H:=L_{J\zeta}\Phi$ is a Hamiltonian for the $S^1$-action, in that it satisfies $$dH = \iota_{\zeta} \Omega.$$ Moreover $H$ is constant along the leaves of the Monge-Amp\`ere foliation associated to $\Omega$. If $(V,\Omega)$ is complete then $H\ge 0$ with equality precisely on $\mathcal Y$.
\end{theorem}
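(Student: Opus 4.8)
The plan is to verify the three assertions in turn, relying on the $S^1$-invariance of $\Omega$ and on the cohomological normalisation $\Omega = \pi_X^*\omega + dd^c\Phi$. First I would establish that $H:=L_{J\zeta}\Phi$ is a Hamiltonian. Since $\Omega$ is closed and $S^1$-invariant, the Cartan formula gives $0=L_\zeta\Omega = d\iota_\zeta\Omega + \iota_\zeta d\Omega = d(\iota_\zeta\Omega)$, so $\iota_\zeta\Omega$ is closed; the issue is to identify a global primitive. Writing $\Omega=\pi_X^*\omega+dd^c\Phi$ and using that $\zeta$ is the lift of the rotation vector field on $X\times D$, which is tangent to the fibres of $\pi_X$ (the $S^1$-action fixes the $X$-coordinate), one gets $\iota_\zeta\pi_X^*\omega=0$, hence $\iota_\zeta\Omega = \iota_\zeta dd^c\Phi$. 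Then I would compute $\iota_\zeta dd^c\Phi$ using $dd^c = 2i\partial\bar\partial$ together with $L_\zeta\Phi=0$ (as $\Phi$ is $S^1$-invariant) and the standard identity relating contraction with $\zeta$ and $J\zeta$ on $(1,1)$-forms of the form $dd^c\Phi$ when $\zeta$ generates a holomorphic action: this yields $\iota_\zeta dd^c\Phi = d(L_{J\zeta}\Phi) = dH$. Concretely, $L_{J\zeta}\Phi = d^c\Phi(\zeta)$ up to sign, and $\iota_\zeta d d^c\Phi = L_\zeta d^c\Phi - d\iota_\zeta d^c\Phi = d(\iota_\zeta d^c\Phi)$ since $d^c\Phi$ is $S^1$-invariant; so $H = -\iota_\zeta d^c\Phi$ (or $+$, depending on sign conventions) does the job.

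Second, I would show $H$ is constant along the leaves of the Monge-Ampère foliation. The foliation is tangent to $\ker\Omega$, so for a leafwise vector field $\xi$ we have $\iota_\xi\Omega=0$, and therefore $\xi(H) = dH(\xi) = (\iota_\zeta\Omega)(\xi) = \Omega(\zeta,\xi) = -\Omega(\xi,\zeta) = -(\iota_\xi\Omega)(\zeta) = 0$. This is immediate once the first part is in hand.

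Third, for a complete solution I would prove $H\ge 0$ with equality exactly on $\mathcal Y$. By $S^1$-invariance and the discussion preceding the theorem, each $\mathcal D_p$ is a complete leaf with $\Omega|_{\mathcal D_p}=0$; along such a leaf the parametrisation by the holomorphic disc $D$ intertwines the $S^1$-action with rotation of $D$, so $\zeta$ restricts to (a multiple of) the angular vector field $\partial_\theta$ on $D$ which vanishes only at the centre $0\in D$, i.e. only at the point $\iota(p)\in\mathcal Y$. Since $H$ is constant on leaves, $H$ on the leaf through a point $u\in V_0=V\cap N_Y$ equals its value at $u$ itself; restricting to the leaf $\mathcal D_p$ (the $p=\pi(u)$ zero-disc) won't directly help, so instead I would use the Hamiltonian equation on the disc: the completeness hypothesis says $V_0$ is a union of discs $\tau\mapsto\tau u$, and $H$ restricted to such a disc is a function of $|\tau|$ whose derivative is controlled by $\iota_\zeta\Omega$ evaluated against the radial vector field; positivity of $\Omega$ on the fibre $V_0$ (property (1) of Theorem \ref{defconethm}) forces this to be increasing in $|\tau|$, vanishing at $\tau=0$, hence $H\ge0$ with equality on $\{\tau=0\}=\iota(Y)=\mathcal Y\cap V_0$. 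Because $H$ is leaf-constant and every leaf meets $V_0$, this propagates to all of $V$.

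The main obstacle I expect is the precise identification in the first step — pinning down the identity $\iota_\zeta dd^c\Phi = dH$ with $H=L_{J\zeta}\Phi$ with correct signs, which requires care with the $d^c$ convention and with the interplay between the holomorphicity of the $S^1$-action (so that $L_\zeta J = 0$) and the invariance $L_\zeta\Phi=0$. The positivity argument in the third step is the next most delicate point, since one must make sure that "increasing along radial directions in $V_0$" genuinely characterises the zero set as $\mathcal Y$ and not a larger set; this uses that $\Omega|_{V_0}$ is a genuine Kähler form (not merely semipositive) together with completeness.
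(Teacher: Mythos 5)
Your proposal is correct and follows essentially the same route as the paper: the identity $dL_{J\zeta}\Phi=\iota_\zeta dd^c\Phi$ via Cartan's formula and invariance of $\Phi$ together with $\iota_\zeta\pi_X^*\omega=0$, the leaf-constancy from $\xi\in\ker\Omega$, and the positivity from the fact that along each normal-fibre disc $\{\tau u\}\subseteq V_0$ one has $L_{J\zeta}H=\Omega(\zeta,J\zeta)>0$ off the zero section (the paper phrases this equivalently as strict convexity of $t\mapsto\Phi(e^t u)$ with value $0$ at $t=-\infty$). The sign ambiguity you flag in step one is harmless, and your observation that $H$ vanishes at $\iota(Y)$ because $\zeta$ does is a clean substitute for the paper's remark that $\Phi=0$ on $\mathcal Y$.
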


At least when $Y$ is compact we can use this Hamiltonian to select a canonical regular solution $(V_{can},\Omega_{can})$ to the HMAE.

\begin{definition}
Given a complete solution $(V,\Omega)$ with Hamiltonian $H$ we define the radius $rad(V,\Omega)$ of $(V,\Omega)$ to be the supremum of all $\lambda\geq 0$ such that $\partial H^{-1}([0,\lambda))\subseteq V$. We then define the \emph{canonical radius} $\Lambda_{can}$ to be the supremum of $rad(V,\Omega)$ over all complete solutions $(V,\Omega)$. 
\end{definition}

Note that the radius of a complete solution could be zero, and hence the canonical radius $\Lambda_{can}$ could also be zero. But at least when $Y$ is compact we clearly have that $rad(V,\Omega)>0,$ and thus $\Lambda_{can}>0$. 

\begin{theorem} \label{thm:canonicalhmae}
Assume that $Y$ is compact (or more generally $\Lambda_{can}>0$). Then there exists a unique complete solution $(V_{can},\Omega_{can})$ to the HMAE as in Theorem \ref{defconethm} with $$rad(V_{can},\Omega_{can})=\Lambda_{can}$$ and $$H_{can}< \Lambda_{can}.$$ This canonical solution is maximal in the following sense. If $(V,\Omega)$ is any other complete solution to the HMAE as in Theorem \ref{defconethm} then for any $\lambda<rad(V,\Omega)$ we have that $$H^{-1}([0,\lambda))=H^{-1}_{can}([0,\lambda))$$ and there $\Omega=\Omega_{can}$.
\end{theorem}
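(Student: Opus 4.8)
The plan is to build the canonical solution $(V_{can}, \Omega_{can})$ by gluing together the germs produced by Theorem~\ref{defconethm}, using the local uniqueness statement from that theorem to guarantee that the gluing is well-defined, and then using the Hamiltonian $H$ from Theorem~\ref{thm:ham} as the natural ``gauge'' that measures how far a complete solution extends. First I would fix, for each complete solution $(V,\Omega)$ and each $\lambda < \operatorname{rad}(V,\Omega)$, the open set $\Omega$-sublevel set $H^{-1}([0,\lambda))$; by definition of the radius this is a relatively compact subset of $V$, it is $S^1$-invariant, and — because $H$ is constant along the leaves of the Monge--Amp\`ere foliation (Theorem~\ref{thm:ham}) — it is saturated, i.e.\ a union of complete leaves. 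The first key step is to show that any two complete solutions agree on the overlap of such sublevel sets: if $(V,\Omega)$ and $(V',\Omega')$ are complete and $\lambda < \min(\operatorname{rad}(V,\Omega), \operatorname{rad}(V',\Omega'))$, then $H^{-1}([0,\lambda)) = (H')^{-1}([0,\lambda))$ as subsets of $\mathcal N_Y$ and $\Omega = \Omega'$ there. Near $\mathcal Y$ this is exactly the germ-uniqueness of Theorem~\ref{defconethm}; to propagate it outward one uses that both forms are regular solutions foliated by complete leaves through $N_Y\subset E$, that on $\mathcal D_p$ (a complete leaf in both) the two foliations coincide because each $\mathcal D_p$ is forced to be a leaf, and that a complete leaf is determined by its germ at its intersection point with $E$ together with the ODE defining the foliation (the kernel distribution of a regular HMAE solution). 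So the leaves match, and matching the leaves plus matching $\Phi$ (determined by $\Omega = \pi_X^*\omega + dd^c\Phi$ with the boundary normalisation on $V_\tau$, $\tau\in S^1$) forces $\Omega = \Omega'$.

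Granting this compatibility, I would define
\begin{equation*}
V_{can} := \bigcup_{(V,\Omega)} \ \bigcup_{\lambda < \operatorname{rad}(V,\Omega)} H^{-1}([0,\lambda)),
\end{equation*}
the union over all complete solutions, and define $\Omega_{can}$ on $V_{can}$ by setting it equal to $\Omega$ on each such piece — well-defined by the previous paragraph. One then checks the routine points: $V_{can}$ is open and $S^1$-invariant, it contains $\mathcal Y$, the restriction $\Omega_{can}$ is a smooth closed $S^1$-invariant $(1,1)$-form satisfying the three HMAE conditions and the cohomological normalisation (all of these are local and inherited from the pieces), and $(V_{can},\Omega_{can})$ is complete (each piece is saturated by complete leaves through $N_Y$, and the condition $u\in V_0 \Rightarrow \tau u \in V_0$ is $S^1\times\mathbb{R}_{>0}$-invariance, which holds for each $H^{-1}([0,\lambda))$ since $H$ and $V$ of a complete solution are invariant under $\tau\mapsto \sigma\tau$). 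The Hamiltonian of $(V_{can},\Omega_{can})$ is the common extension $H_{can}$ of all the local $H$'s, again by uniqueness of $H = L_{J\zeta}\Phi$ once $\Omega$ and $\Phi$ agree.

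The next step is to identify $\operatorname{rad}(V_{can},\Omega_{can})$ with $\Lambda_{can}$ and to verify $H_{can} < \Lambda_{can}$. For the lower bound: for any complete solution $(V,\Omega)$ and any $\lambda < \operatorname{rad}(V,\Omega)$ we have $H^{-1}([0,\lambda)) \subseteq V_{can}$ with $H_{can} = H$ there, hence $\partial H_{can}^{-1}([0,\mu))\subseteq V_{can}$ for all $\mu \le \lambda$; taking the sup over $\lambda$ and then over $(V,\Omega)$ gives $\operatorname{rad}(V_{can},\Omega_{can})\ge \Lambda_{can}$. The reverse inequality $\operatorname{rad}(V_{can},\Omega_{can})\le \Lambda_{can}$ is immediate since $(V_{can},\Omega_{can})$ is itself a complete solution. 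That $H_{can} < \Lambda_{can}$ on $V_{can}$: any point of $V_{can}$ lies in some $H^{-1}([0,\lambda))$ with $\lambda < \operatorname{rad}(V,\Omega)\le \Lambda_{can}$, so its $H_{can}$-value is $<\lambda<\Lambda_{can}$. Uniqueness of the canonical solution with these two properties, and the maximality statement, then follow formally: if $(V,\Omega)$ is any complete solution and $\lambda < \operatorname{rad}(V,\Omega)$, then $H^{-1}([0,\lambda))\subseteq V_{can}$ and $\Omega = \Omega_{can}$ there by construction, giving $H^{-1}([0,\lambda)) = H_{can}^{-1}([0,\lambda))$; and if $(V,\Omega)$ additionally satisfies $\operatorname{rad}(V,\Omega) = \Lambda_{can}$ and $H < \Lambda_{can}$, then $V = \bigcup_{\lambda < \Lambda_{can}} H^{-1}([0,\lambda)) = \bigcup_{\lambda<\Lambda_{can}} H_{can}^{-1}([0,\lambda)) = V_{can}$ with $\Omega = \Omega_{can}$, using $H_{can}<\Lambda_{can}$ for the last union.

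\emph{The main obstacle} I anticipate is the first step — the rigidity/overlap argument showing that two complete solutions literally coincide on the common sublevel sets, not merely near $\mathcal Y$. The germ-uniqueness in Theorem~\ref{defconethm} is only a statement in a neighbourhood of $\mathcal Y$, so propagating agreement out to all of $H^{-1}([0,\lambda))$ requires a genuine continuation argument: one must argue that the set where the two regular solutions (equivalently, the two foliations and the two potentials $\Phi$) agree is open and closed inside the connected saturated set $H^{-1}([0,\lambda))$. Openness is the uniqueness theorem applied at each point (or rather, the statement that a regular HMAE solution near a point with a prescribed leaf through it is locally rigid). Closedness uses the smoothness of both $\Omega$'s and the fact that the leaves through $N_Y\subset E$ depend continuously (indeed smoothly) on their starting point in $E$, so a limit of agreeing leaves is an agreeing leaf. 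Making this continuation clean — in particular handling the interplay between ``the foliations agree'' and ``the potentials agree'', and ensuring connectedness of $H^{-1}([0,\lambda))$ (which should follow since it deformation-retracts onto $\mathcal Y$ along the leaves, using completeness) — is where the real work lies; everything after that is bookkeeping with sublevel sets.
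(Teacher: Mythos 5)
Your proposal is correct and follows essentially the same route as the paper: the paper also defines $V_{can}$ as a union of sublevel sets $H^{-1}([0,\lambda))$ over complete solutions (Lemma \ref{lem:completerestr} showing these are again complete), with well-definedness supplied by an overlap proposition (Proposition \ref{prop:completeuniq2}) proved by exactly the open--closed continuation you anticipate, run along the radial path of leaves $\mathcal L_t$ through $tu$, $t\in[0,1]$, for $u\in V_0$. The ``local rigidity'' you flag as the main obstacle is implemented there by the two-sided maximum principle: near a shared complete leaf, $\Phi'-\Phi$ is subharmonic along the complete leaves of the $\Omega$-foliation and vanishes on their boundaries, giving $\Phi\ge\Phi'$, and symmetrically, while closedness uses $\lim_{t\to T}H'(tu)=\lim_{t\to T}H(tu)<\lambda\le rad(V',\Omega')$ to keep the limit point inside $V'$ --- precisely the role of the radius hypothesis in your setup.
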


Thus when $\Lambda_{can}>0$ the function ${H_{can}}_{|_{V_1}}$ gives a canonical smooth function defined on a neighbourhood of $Y$ in $X$ (that depends on the K\"ahler structure of $X$ near $Y$), whose level sets give a canonical "flow" away from $Y$, which we will revisit below.

\subsection{Local regularity of weak solutions}

For topological reasons one cannot find global regular solutions of the HMAE on $\mathcal{N_Y}$ but instead of looking for local regular solutions one can consider global weak solutions. Using a variant of the Perron envelope, for any fixed number $c$ one can construct a closed positive $(1,1)$-current $\Omega_w$ on $\mathcal{N_Y}$ cohomologous to $\pi_X^*\omega-c[E]$ which restricts to $\omega$ on $X_{\tau}$ for $\tau\in S^1,$ and such that $$\Omega_w^{n+1}=0$$ in the sense of Bedford-Taylor. Thus $\Omega_w$ is a globally defined weak solution to the HMAE.

\begin{theorem}\label{thm:mainhmae2}
Suppose $Y$ is compact (or more generally $\Lambda_{can}>0$). Then the weak solution $\Omega_w$ is equal to $\Omega_{can}$ on $H_{can}^{-1}([0,c))$. In particular $\Omega_w$ is smooth and regular on a neighbourhood of $\mathcal{Y}$. 
\end{theorem}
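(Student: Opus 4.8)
The plan is to identify $\Omega_w$ with $\Omega_{can}$ wherever the latter is defined (up to the cohomological normalization by $[E]$), using the maximality characterization of both objects together with the uniqueness statements already established. First I would recall that $\Omega_{can}$ is cohomologous to $\pi_X^*\omega$, not to $\pi_X^*\omega - c[E]$, so strictly speaking the comparison must be made after adding $c[E]$ (equivalently, after subtracting a potential for $c[E]$ concentrated near $E$); since near $\mathcal Y$ the divisor $E$ is smooth and we are working on the open set $H_{can}^{-1}([0,c))$ which meets $E$ only along $\iota(Y)$, this adjustment is harmless and I will suppress it, or absorb it into the choice of reference form. The key point is that on $H_{can}^{-1}([0,c))$, the form $\Omega_{can}$ is a genuine smooth solution of the HMAE whose restriction to $X_\tau$, $\tau\in S^1$, equals $\omega$, and which near $\mathcal Y$ has the correct behaviour along $E$.

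The main step is a comparison principle argument. On the open $S^1$-invariant set $W:=H_{can}^{-1}([0,c))$, both $\Omega_w$ and $\Omega_{can}$ are closed positive $(1,1)$-currents (forms, in the case of $\Omega_{can}$) with $\Omega^{n+1}=0$, restricting to $\omega$ on the fibres over $S^1$. Write $\Omega_{can} = \pi_X^*\omega + dd^c\Phi_{can}$ and $\Omega_w = \pi_X^*\omega + dd^c\Phi_w$ (after the normalization above), both $\Phi$'s vanishing on $X_\tau$ for $\tau\in S^1$. I would first show $\Phi_{can}\le \Phi_w$: since $\Omega_w$ is by construction the Perron envelope — the supremum of all subsolutions — and $\Phi_{can}$ is a subsolution on $W$ (it is a genuine solution, hence $\omega$-psf and maximal there) with the right boundary values on $\partial_D W$-fibres over $S^1$, it is dominated by the envelope. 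For the reverse inequality, I would use maximality of $\Omega_{can}$: on $W$ the function $\Phi_w$ is $(\pi_X^*\omega)$-plurisubharmonic and $\Omega_w$ is maximal, so $\Phi_w$ cannot exceed the maximal extension of its boundary data, which on a complete regular solution is exactly $\Phi_{can}$ by the uniqueness in Theorem \ref{defconethm} and the maximality in Theorem \ref{thm:canonicalhmae}. Care is needed at the boundary where $\partial W$ meets $E$; here the level set $\{H_{can}=c\}$ provides the comparison surface, and the fact (Theorem \ref{thm:ham}) that $H_{can}$ is constant along the Monge-Amp\`ere foliation means the leaves through $W$ stay inside $W$, so the foliation argument localizes cleanly.

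The hard part, I expect, is the boundary matching: the global weak solution $\Omega_w$ is defined by a Perron construction on all of $\mathcal N_Y$ and a priori one only controls its boundary values on the fibres over $S^1$, not on the artificial boundary $\{H_{can}=c\}$ inside $\mathcal N_Y$. To run the comparison principle on $W$ one needs to know that $\Phi_w$ and $\Phi_{can}$ have comparable boundary behaviour along $\partial W$, and this is not automatic — it requires showing that $\Phi_w$ is already bounded and, ideally, continuous up to $\{H_{can}=c\}$, or else running the argument with a family of slightly smaller sublevel sets and passing to the limit. I would handle this by an exhaustion: apply the comparison on $H_{can}^{-1}([0,\lambda))$ for each $\lambda<c$ using the fact that on such a set $\Omega_{can}$ restricted to the boundary level set is strictly positive transverse to the foliation, deduce $\Omega_w=\Omega_{can}$ there, and then let $\lambda\uparrow c$. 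Once the equality $\Omega_w=\Omega_{can}$ is established on $H_{can}^{-1}([0,c))$, smoothness and regularity of $\Omega_w$ near $\mathcal Y$ follow immediately from the corresponding properties of $\Omega_{can}$ in Theorem \ref{defconethm}, since $\mathcal Y\subseteq H_{can}^{-1}(0)\subseteq H_{can}^{-1}([0,c))$ by the last assertion of Theorem \ref{thm:ham}.
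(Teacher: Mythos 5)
There is a genuine gap, and it sits exactly where you chose to ``suppress'' the normalization by $c[E]$. The first inequality, $\Phi_{can}+c\ln|\tau|^2\le\Phi_w$, does \emph{not} follow from saying that $\Phi_{can}$ is a subsolution on $W=H_{can}^{-1}([0,c))$ dominated by the Perron envelope: the envelope $\Phi_w$ is the supremum over $\pi_X^*\omega$-psh functions defined on \emph{all} of $\mathcal N_Y$, bounded by $0$ on $X\times S^1$, and with Lelong number at least $c$ along $E$. The function $\Phi_{can}$ is only defined on $V_{can}$ and is smooth, hence has zero Lelong number along $E$, so neither it nor $\Phi_{can}+c\ln|\tau|^2$ is a competitor in that supremum, and there is no a priori way to extend it to a global competitor. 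The paper's proof supplies precisely the missing mechanism: the partial Legendre transforms $\alpha_\lambda(x)=\inf_\tau\{\Phi(x,\tau)+\lambda\ln|\tau|^2\}$, which by Proposition \ref{prop:alphalambda} are $\omega$-psh, vanish outside $H^{-1}([0,\lambda))$ (so extend by zero to all of $X$), and have Lelong number at least $\lambda$ along $Y$. Each $\alpha_\lambda(x)+(c-\lambda)\ln|\tau|^2$ is then a genuine global competitor, hence below $\Phi_w$, and the involution property of the Legendre transform recovers $\Phi+c\ln|\tau|^2=\sup_{\lambda<c}\{\alpha_\lambda+(c-\lambda)\ln|\tau|^2\}$ on $H^{-1}([0,c))$. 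This is the central idea of the proof and it is absent from your proposal; without it the ``easy'' direction is circular.

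Your treatment of the reverse inequality also misidentifies where the difficulty lies. You worry about boundary matching along the artificial hypersurface $\{H_{can}=c\}$ and propose an exhaustion by sublevel sets, but an exhaustion does not help because $\Phi_w$ is not controlled on any level set of $H_{can}$ either. The paper avoids the issue entirely by arguing leaf by leaf: every leaf $\mathcal L$ of the Monge--Amp\`ere foliation of the complete solution restricted to $H^{-1}([0,c))$ is a complete leaf whose boundary lies over $S^1$, where both potentials vanish. Since $\Phi_w$ has Lelong number at least $c$ along $E$, the function $(\Phi_w-c\ln|\tau|^2)|_{\mathcal L}+\psi$ (with $dd^c\psi=\pi_X^*\omega|_{\mathcal L}$) is subharmonic on $\mathcal L$, while $\Phi|_{\mathcal L}+\psi$ is harmonic with the same boundary values, so the maximum principle gives $\Phi_w-c\ln|\tau|^2\le\Phi$ on each leaf, hence everywhere on $H^{-1}([0,c))$. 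Appealing instead to ``maximality of $\Omega_{can}$'' and the uniqueness of Theorem \ref{defconethm} is not sufficient, since those statements compare regular solutions with one another, not a regular solution with the a priori merely bounded current $\Omega_w$.
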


In particular this gives a local regularity result for the weak geodesic rays that are naturally associated to the deformation to the normal cone (see Section \ref{sec:previous}).

\subsection{Volume Growth of Canonical Tubular Neighbourhoods}

Any Riemannian metric $g$ defines a tubular neighbourhood of $Y$ through the exponential map, whose boundary is of course the geodesic distance function $dist(\cdot,Y)$  to $Y$.  For our canonical tubular neighbourhoods, the analog of the function $dist(\cdot,Y)^2$ is, up to some multiplicative constant, the restriction of the Hamiltonian function $H_{can}$ from Theorem \ref{thm:canonicalhmae} to the fiber $X_1\simeq X$ (and we shall denote this restriction by $h$).   Thus we actually have a flow of subsets 
\begin{equation}
\tilde{U}_{\lambda} : = \{ x\in X : h(x)\le \lambda\} \text{ for } \lambda\le \Lambda_{can}\label{eq:deftildeU}
\end{equation}
along with canonical tubular neighbourhoods
$$ T_{\lambda} \colon U_{\lambda} \simeq \tilde{U}_{\lambda}$$
from some subset $U_{\lambda}$ of $N_Y$, with the properties of Theorem \ref{thm:tubularmain}.    It turns out that the volume growth of these neighbourhoods is polynomial in $\lambda$, with coefficients that depend only on the topology of $Y$ and the K\"ahler class of $\omega$.

\begin{theorem}\label{thm:volumegrowth}
Assume $X$ is compact, and let $\pi\colon \tilde{X}\to X$ denote the blowup of $X$ along $Y$ with exceptional divisor $E$.  Then the volume of the tubular neighbourhood $T_{\lambda}$ is
$$Vol_{\lambda}:= \int_{\tilde{U}_{\lambda}} \frac{\omega^n}{n!} = \frac{1}{n!}\left(\int_X [\omega]^{n} - \int_{\tilde{X}} (\pi^*[\omega] -\lambda [E])^n\right)$$
where $[E]$ denotes the class of the current of integration along $E$.  In particular $Vol_{\lambda}$ is a polynomial in $\lambda$ whose coefficients depend only on the topology of $Y$ and $X$ and the K\"ahler class of $\omega$.
\end{theorem}

This result bears some similarity to a result of Gray \cite{Gray} who computes the volume growth of the tubular neighbourhood defined using the distance function $dist(\cdot,Y)$ associated to a K\"ahler metric $\omega$.  Gray proves that, if $X$ has constant sectional curvature, then this volume is also a polynomial with topological coefficients (although the coefficients are different to ours).  It may be interesting to ask how these are related, but we will not pursue this question here.

\subsection{Optimal regularity of plurisubharmonic envelopes}

Very much connected to the above discussion are some naturally defined envelopes that occur in pluripotential theory, and our methods provides a new regularity theorem in this context.   Let $Y$ be a compact complex submanifold of a (not necessarily compact) K\"ahler manifold $(X,\omega)$ and let $\lambda>0$ be a parameter.   We consider the envelope
$$\psi_{\lambda}:=\sup\{\psi\in PSH(X,\omega): \psi\leq 0 \text{ and }  \nu_{Y}(\psi)\geq \lambda\}$$
where $PSH(X,\omega)$ denotes the set of $\omega$-plurisubharmonic functions (i.e.\ upper semicontinuous $L^1_{loc}$ functions $\psi$ such that $\omega + dd^c \psi$ is a positive current) and $\nu_Y(\psi)$ denotes the Lelong-number of $\psi$ along $Y$.  Given this data, the \emph{equilibrium set} is defined to be
$$S_{\lambda}:=\psi_{\lambda}^{-1}(0)$$
whose complement $S_{\lambda}^c$ is a neighbourhood of $Y$.   As we will see (c.f.\ Theorem \ref{optimalregthm2} and Remark \ref{rmk:tubularequilibrium}) our tubular neighbourhoods $\tilde{U}_{\lambda}$ from \eqref{eq:deftildeU} are precisely the complement of the equilibrium set.  

Thus the above theorems have implications for the structure of this complement, so we introduce the following definition of regularity that captures what we shall prove:

\begin{definition}
We say that $\psi_{\lambda}$ has \emph{optimal regularity} if
\begin{enumerate}
\item $S_{\lambda}$ is smoothly bounded.
\item On $S_{\lambda}^c\setminus Y$ the envelope $\psi_{\lambda}$ is smooth and also
$$ (\omega+dd^c\psi_{\lambda})^{n-1}\neq 0\quad \text{ on } S_{\lambda}^c\setminus Y.$$
\item There is a family $\{D_{q}\}$ of holomorphic discs in $S_{\lambda}^c,$ parametrized by points $q$ in the exceptional divisor in $Bl_Y X,$ such that the restriction of $\omega+dd^c\psi_{\lambda}$ to each $D_q$ is zero.  Moreover each $D_q$ passes through $Y$, the boundary of $D_q$ lies in $\partial S_{\lambda}$ and the family $\{D_q\}$ foliates the blowup of $S_{\lambda}^c$ along $Y$ (see Figure 2). 
\end{enumerate}
\end{definition}

\begin{figure}[htb]
	\centering
\scalebox{.8}{\input{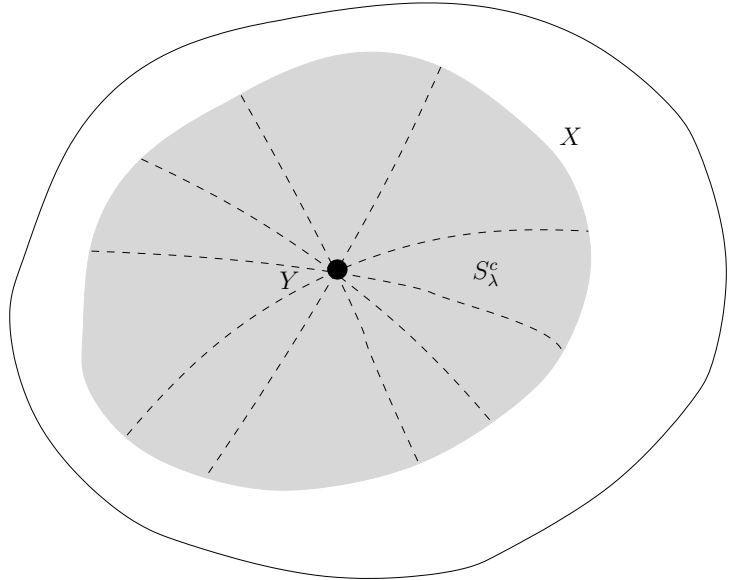}}
	\caption{Optimal Regularity around a submanifold $Y$. Dotted lines represent the discs $D_q$.}
	\label{Optimal Regularity}
\end{figure}

Recall that $\Lambda_{can}$ denotes the canonical radius.

\begin{theorem} \label{optimalregthm}
Assume that $Y$ is compact (or more generally $\Lambda_{can}>0$). Then for $\lambda$ small enough (i.e. $\lambda <\Lambda_{can}$) the envelope $\psi_{\lambda}$ has optimal regularity. Moreover the corresponding holomorphic discs $D_p$ all have area $\lambda$, and the boundaries $\partial S_{\lambda}$ vary smoothly with $\lambda$. 
\end{theorem}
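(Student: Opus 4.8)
The plan is to reduce the statement about $\psi_\lambda$ to the canonical HMAE solution $(V_{can},\Omega_{can})$ via the standard correspondence between $S^1$-invariant solutions of the HMAE on the deformation to the normal cone and geodesic rays/envelopes. First I would recall the dictionary: an $S^1$-invariant closed positive $(1,1)$-current on $\mathcal{N}_Y$ restricting to $\omega$ on $X_\tau$ for $|\tau|=1$ corresponds, via $\tau \mapsto \log|\tau|$ and averaging over $S^1$, to a curve $t\mapsto \psi_t$ of $\omega$-psh functions on $X$; the weak solution $\Omega_w$ cohomologous to $\pi_X^*\omega - c[E]$ corresponds exactly (see Section~\ref{sec:previous}) to the envelope-type ray whose ``value at infinity'' is governed by the Lelong number condition $\nu_Y \ge c$, and in fact $\psi_c$ as defined above is, up to the standard normalisation, this weak solution restricted to a single fibre. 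Concretely I would show $\psi_\lambda = \sup\{\psi : \psi \le 0, \ \nu_Y(\psi)\ge\lambda\}$ is recovered from $\Omega_w$ by restricting the associated subgeodesic to an appropriate fibre (or equivalently by a Legendre-type transform in the $\log|\tau|$ variable), so that $\lambda$ plays the role of $c$.

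Next, granting $\lambda < \Lambda_{can}$, Theorem~\ref{thm:mainhmae2} tells us $\Omega_w = \Omega_{can}$ on $H_{can}^{-1}([0,\lambda))$, which is an $S^1$-invariant neighbourhood of $\mathcal{Y}$ foliated by the complete leaves of the Monge-Amp\`ere foliation of $\Omega_{can}$. I would then transport the three items in the definition of optimal regularity across the dictionary. For (1): the equilibrium set $S_\lambda$ is the image in $X$ of the complement of $H_{can}^{-1}([0,\lambda))$ inside the total space, and its boundary $\partial S_\lambda$ is the image of the level set $H_{can}^{-1}(\lambda)$; since $H_{can}$ is smooth with $dH_{can} = \iota_\zeta \Omega_{can}$ nonvanishing away from $\mathcal{Y}$ (Theorem~\ref{thm:ham} — $H_{can}$ is a proper Hamiltonian, zero exactly on $\mathcal{Y}$, so $\lambda>0$ is a regular value after possibly shrinking), $\lambda$ is a regular value and $\partial S_\lambda$ is a smooth hypersurface, proving smooth boundedness and, by the implicit function theorem applied to the family $\{H_{can}^{-1}(\lambda)\}_\lambda$, smooth dependence on $\lambda$. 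For (2): on $S_\lambda^c \setminus Y$ the function $\psi_\lambda$ is read off from the smooth $\Phi_{can}$ with $\Omega_{can} = \pi_X^*\omega + dd^c\Phi_{can}$, hence smooth; the nonvanishing $(\omega+dd^c\psi_\lambda)^{n-1}\neq 0$ follows from property (1) of Theorem~\ref{defconethm} (the restriction of $\Omega_{can}$ to each fibre is K\"ahler) together with the fact that the Monge-Amp\`ere foliation is one-dimensional, so the kernel of $\Omega_{can}$ on $V$ is a line and its restriction to $X$ has at most one degenerate direction — giving exactly that the $(n-1)$-st power survives. For (3): the discs $D_q$ are precisely the images under $T_1$ (equivalently, the projections to $X_1 \cong X$) of the complete leaves of the Monge-Amp\`ere foliation of $\Omega_{can}$ passing through the fibre over $1$; each leaf is a holomorphic disc, $\Omega_{can}$ vanishes on it, its centre lies on $\mathcal{Y}$ (so $D_q$ passes through $Y$), its boundary lies on $H_{can}^{-1}(\lambda)$ (so $\partial D_q \subset \partial S_\lambda$), and the leaves foliate $H_{can}^{-1}([0,\lambda))$, which blows down to the blowup of $S_\lambda^c$ along $Y$ — the parametrising space being the exceptional divisor, i.e.\ $\mathbb{P}(N_Y)$. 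Finally, the area statement: the symplectic area of $D_q$ with respect to $\omega$ equals $\int_{D_q}\Omega_{can}$ over a leaf capped off, which by Stokes and the Hamiltonian identity $dH_{can}=\iota_\zeta\Omega_{can}$ equals the value of $H_{can}$ on $\partial D_q$, namely $\lambda$; this is where the normalisation of $H$ is pinned down so that ``radius'' literally equals ``area''.

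The main obstacle, I expect, is establishing the dictionary precisely enough — i.e.\ identifying $\psi_\lambda$, defined intrinsically as a Lelong-number-constrained envelope on $X$, with the restriction to a fibre of the weak HMAE solution $\Omega_w$ on $\mathcal{N}_Y$, with $\lambda$ matched to $c$ and with the correct normalisation constants so that the radius/area/Lelong-number bookkeeping is consistent. This requires care because the Lelong number along $Y$ translates into the coefficient of $[E]$ in the cohomology class on the deformation space, and because the envelope over $X$ must be compared with the Perron envelope defining $\Omega_w$ fibrewise, using that the $S^1$-average of a subgeodesic is again a subgeodesic and that maximality is preserved. Once this identification is in hand, items (1)--(3) are essentially a translation of Theorems~\ref{defconethm}, \ref{thm:ham}, \ref{thm:canonicalhmae} and~\ref{thm:mainhmae2} through the blowdown map $\mathcal{N}_Y \to X\times D$ followed by projection to $X$, and the only remaining subtlety is checking that this blowdown/projection is a diffeomorphism on the relevant neighbourhood away from $Y$ (which it is, since $\pi$ is an isomorphism off $E$) and that it correctly contracts $E \cap V$ onto $Y$, turning the foliation by leaves into the foliation of the blowup of $S_\lambda^c$ along $Y$.
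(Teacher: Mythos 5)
Your overall strategy is the paper's: reduce everything to the canonical solution $(V_{can},\Omega_{can})$, identify $\psi_\lambda$ with a Legendre transform of the potential $\Phi$ in the $\log|\tau|$ variable, and then read off items (1)--(3) from the Monge--Amp\`ere foliation, the Hamiltonian $H$ and its level sets. The gap is exactly the step you flag as ``the main obstacle'' and then do not close: the identification $\psi_\lambda=\alpha_\lambda$, where $\alpha_\lambda(x)=\inf_\tau\{\Phi(x,\tau)+\lambda\ln|\tau|^2\}$. This needs two separate inequalities, neither of which is automatic. First, $\alpha_\lambda\le\psi_\lambda$ requires showing $\alpha_\lambda$ is a competitor for the envelope, i.e.\ that it is $\omega$-psh (Kiselman's minimum principle) and, crucially, that $\nu_Y(\alpha_\lambda)\ge\lambda$; the latter is not formal and is proved in the paper (Proposition \ref{prop:alphalambda}) by an explicit estimate in an admissible $\rho$-chart, using $\Gamma(e^{t/2}z',z'',e^{-t/2})=(f(z',z''),e^{-t/2})$ to convert the infimum over $\tau$ into a $\lambda\ln|z'|^2$ singularity along $Y$. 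Second, $\psi_\lambda\le\alpha_\lambda$ comes from the inequality $\psi_\lambda+(c-\lambda)\ln|\tau|^2\le\Phi_w$ (any competitor for $\psi_\lambda$ produces a competitor for the Perron envelope defining $\Phi_w$) combined with the identity $\Phi_w=\Phi+c\ln|\tau|^2$ on $H^{-1}([0,c))$ supplied by Theorem \ref{thm:mainhmae2}; your ``restrict the subgeodesic to a fibre'' phrasing does not by itself produce this chain. Without both inequalities none of (1)--(3) can be transported, since a priori the intrinsically defined $\psi_\lambda$ has nothing to do with $\Omega_{can}$.

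Two smaller points. Smoothness of $\psi_\lambda$ on $S_\lambda^c\setminus Y$ is not simply ``read off from the smooth $\Phi$'': one must show the infimum defining $\alpha_\lambda$ is attained exactly on $H^{-1}(\lambda)$, which the paper does with a maximum-principle argument along the leaf, yielding $\alpha_\lambda(x)=\Phi(x,\tau)+\lambda\ln|\tau|^2$ for $(x,\tau)\in H^{-1}(\lambda)$. Relatedly, for $(\omega+dd^c\psi_\lambda)^{n-1}\neq 0$ the relevant comparison is with $\Omega_{can}$ restricted to the hypersurface $H^{-1}(\lambda)$ (the semisymplectic quotient picture), not with its restriction to a fibre $V_\tau$ as your appeal to property (1) of Theorem \ref{defconethm} suggests; your ``at most one degenerate direction'' heuristic gives the right conclusion, but the justification goes through the equality of $\psi_\lambda$ with $\Phi+\lambda\ln|\tau|^2$ on that level set. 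The remaining ingredients --- the Hamiltonian/Stokes computation of the area, and the smooth variation of $\partial S_\lambda$ from $dH=\iota_\zeta\Omega_{can}\neq 0$ away from $\iota(Y)$ --- match the paper's argument.
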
  

In fact $\partial S_\lambda=H_{can}^{-1}(\lambda)\cap V_1$ where $H_{can}$ is the Hamiltonian function of the canonical solution $(V_{can},\Omega_{can})$ to the HMAE associated to the data $Y\subset (X,\omega)$, and the foliation of $S_{\lambda}^c$ is the projection of the Monge-Amp\`ere foliation.

The above result is interesting even in the simplest case that $Y$ is a point in an open set in $\mathbb C^n$. This is then a purely local statement and we shall use the optimal regularity to prove the existence of a flow in $\mathbb C^n$ by sets with a certain reproducing property:

\begin{theorem}\label{thm:flowpoint}
Let $\phi$ be a smooth strictly plurisubharmonic function on the open unit ball $B$ in $\mathbb{C}^n$. Let $\psi_{\lambda}$ be defined as
$$\psi_{\lambda}:=\sup\{\psi\in PSH(B): \psi\leq \phi, \nu_0(\psi)\geq \lambda\},$$
and thus the equilibrium set
$$ S_{\lambda} := \{ \psi_{\lambda}= \phi\}.$$
Then for small $\lambda$ (i.e. $\lambda<\Lambda_{can}$) the envelope $\psi_{\lambda}$ has optimal regularity. Also for any bounded holomorphic function $f$ on $B_{\lambda}:=B\setminus S_{\lambda}$ we have
\begin{equation}
\frac{1}{\operatorname{vol}(B_{\lambda})}\int_{B_{\lambda}}f \frac{(dd^c\phi)^n}{n!}=f(0).\label{eq:reproducing}
\end{equation}
\end{theorem}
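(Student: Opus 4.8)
The plan is to deduce Theorem~\ref{thm:flowpoint} from the results already established, after noting that this is precisely the purely local model of Theorem~\ref{optimalregthm}.

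\emph{Step 1: optimal regularity.} Set $\omega:=dd^c\phi$, a K\"ahler form on $B$, and take $Y:=\{0\}$. The substitution $\psi\mapsto g:=\psi-\phi+\lambda\log|z|^2$ turns the defining family of $\psi_\lambda$ into $\{g\in PSH(B,\omega): g\le 0,\ \nu_{\{0\}}(g)\ge\lambda\}$, since $\omega+dd^cg=dd^c\psi+\lambda\,dd^c\log|z|^2\ge 0$, $g\le 0\Leftrightarrow\psi\le\phi-\lambda\log|z|^2$, and $\nu_{\{0\}}(g)=\nu_{\{0\}}(\psi)+\lambda$. Writing $\tilde\psi_\lambda$ for the envelope of Theorem~\ref{optimalregthm}, the optimal regularity of $\tilde\psi_\lambda$ -- in particular the fact, a consequence of that regularity near $Y$, that $\omega+dd^c\tilde\psi_\lambda=\lambda\,dd^c\log|z|^2$ on a punctured neighbourhood of $0$ -- shows that $\tilde\psi_\lambda+\phi-\lambda\log|z|^2$ is itself an admissible competitor, whence $\psi_\lambda=\tilde\psi_\lambda+\phi-\lambda\log|z|^2$ and $S_\lambda$ is the equilibrium set of $\tilde\psi_\lambda$. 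So for $\lambda<\Lambda_{can}$, Theorem~\ref{optimalregthm} immediately yields the optimal regularity of $\psi_\lambda$ (with ambient form $\omega$), the family of discs $D_q$ of area $\lambda$ through $0$, and the smooth dependence on $\lambda$ of $\partial S_\lambda=H_{can}^{-1}(\lambda)\cap V_1$.

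\emph{Step 2: the reproducing property.} Let $g:=f-f(0)$, so $g$ is holomorphic on $B_\lambda$ with $g(0)=0$, and it suffices to prove $\int_{B_\lambda}g\,(dd^c\phi)^n=0$. By Step~1 the sublevel sets $B_t:=\{H_{can}<t\}\cap V_1$ increase from $B_0=\{0\}$ to $B_\lambda$ with smooth boundaries $\partial S_t=H_{can}^{-1}(t)\cap V_1$ for $0\le t\le\lambda$, so by the coarea formula
\[
\int_{B_\lambda}g\,\frac{(dd^c\phi)^n}{n!}=\int_0^\lambda\Big(\int_{\partial S_t}g\,\mu_t\Big)\,dt,\qquad \mu_t:=\frac{(dd^c\phi)^n}{n!\,|dH_{can}|}\Big|_{\partial S_t},
\]
and it is enough to show $\int_{\partial S_t}g\,\mu_t=0$ for each $t<\lambda$. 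Now $\partial S_t$ is foliated by the boundary circles $\partial D_q^{(t)}$ of the holomorphic discs $D_q^{(t)}$ (of area $t$, through $0$, parametrised by maps $F_u\colon D\to B$ with $F_u(0)=0$), hence $\int_{\partial S_t}g\,\mu_t$ fibres over $q\in\mathbb{P}^{n-1}$ into integrals over the circles $\partial D_q^{(t)}$; on each such circle $g\circ F_u$ is holomorphic with $g\circ F_u(0)=0$. The claim to be proved is that, in the coordinate $\tau$, the measure induced by $\mu_t$ on each circle $\partial D_q^{(t)}$ is rotationally invariant; granting this, each fibre integral vanishes by the mean value theorem for holomorphic functions, so $\int_{\partial S_t}g\,\mu_t=0$, giving the identity. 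Taking $f\equiv 1$ identifies the remaining total mass with $\vol(B_\lambda)$ and produces \eqref{eq:reproducing}.

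\emph{Step 3: the main obstacle.} Everything rests on the rotational invariance asserted in Step~2 -- equivalently, that the normal velocity of the flow $t\mapsto\partial S_t$ is distributed along each characteristic circle $\partial D_q$ exactly as in the model case $\phi=|z|^2$, where $B_t$ is a round ball centred at $0$ and the statement is the classical mean value property. The subtlety is that $dd^c\phi$ restricted to a leaf $D_q$ is \emph{not} itself rotationally symmetric in general, so one cannot simply integrate $(dd^c\phi)^{\wedge n}$ leafwise; instead one must slice the ambient volume along the Monge-Amp\`ere foliation, and the invariance has to be extracted from the global structure of the canonical solution $(V_{can},\Omega_{can})$ to the HMAE on the deformation to the normal cone. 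Concretely: in the coordinates $(\tau,u)$ flattening the foliation, $\Omega_{can}$ is independent of $\tau$ (its kernel is spanned by $\partial/\partial\tau$ and $\Omega_{can}$ is closed), the potential satisfies $\Phi(\tau,e^{i\theta}u)=\Phi(e^{i\theta}\tau,u)$, and one has $dH_{can}=\iota_\zeta\Omega_{can}$ and $\Omega_{can}^{n+1}=0$ (Theorems~\ref{defconethm} and \ref{thm:ham}); combining these -- and using $\pi_X^*\omega=\Omega_{can}-dd^c\Phi$ to express $(dd^c\phi)^n$ in these coordinates -- is what forces the leafwise slice to be $\tau$-rotation invariant. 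Making this precise is the heart of the argument and the place where the hard-won regularity of the HMAE solution is genuinely used; the auxiliary identifications (that $\partial S_t=H_{can}^{-1}(t)\cap V_1$ and that $H_{can}|_{V_1}$ records the $\omega$-area of the corresponding disc) play the role of the radial coordinate in the flat computation.
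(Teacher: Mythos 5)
Your Step 1 is exactly the paper's reduction: the substitution $\psi\mapsto\psi-\phi+\lambda\ln|z|^2$ converts the envelope into the one treated in Theorem \ref{optimalregthm2}, and optimal regularity follows. The issue is the reproducing property. Your Step 2 reduces it, via a coarea slicing of $B_\lambda$ by the level sets of $H_{can}$ followed by a further disintegration of each slice along the boundary circles of the holomorphic discs, to the claim that the resulting measure on each circle $\partial D_q^{(t)}$ is rotation invariant in the disc coordinate $\tau$ --- and your Step 3 explicitly concedes that this claim is not proved. That is a genuine gap: the entire argument rests on it, and the sentence ``combining $dH=\iota_\zeta\Omega$, $\Omega^{n+1}=0$ and the equivariance of $\Phi$ is what forces the invariance'' is a description of where a proof might come from, not a proof. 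The needed statement is also delicate to even formulate: $(dd^c\phi)^n$ restricted to a leaf is not rotation invariant, and the leafwise conditional measures of $\mu_t$ are only defined up to the choice of a transverse measure on the space of leaves, so the invariance you want is a statement about a two-step disintegration that has to be set up carefully before it can be attacked.

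The paper closes exactly this gap by a device that bypasses the coarea formula altogether: pull back the \emph{entire} integral under the canonical tubular neighbourhood $T\colon U\to B$ of Theorem \ref{thm:tubularmain}. By Lemma \ref{lem:jistubular} (i.e.\ Proposition \ref{prop:pullback}) the form $\tilde\omega:=T^*dd^c\phi$ is $S^1$-invariant on $U$, and $T^{-1}(B_\lambda)=\{H<\lambda\}\cap U$ is $S^1$-invariant as well, so averaging over the action gives
$$\int_{B_\lambda}f\,\frac{(dd^c\phi)^n}{n!}=\int_{\{H<\lambda\}\cap U}\Big(\frac{1}{2\pi}\int_0^{2\pi}f\bigl(T(e^{i\theta}u)\bigr)\,d\theta\Big)\frac{\tilde\omega^n}{n!},$$
and the inner average equals $f(0)$ by the mean value property, since $\theta\mapsto T(e^{i\theta}u)$ extends to the holomorphic disc $F_u$ centred at the origin (property (2) of Theorem \ref{thm:tubularmain}); taking $f\equiv 1$ identifies the total mass with $\vol(B_\lambda)$. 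This is precisely the rotational invariance you were after, but realised globally as invariance of the pulled-back volume form on an invariant domain rather than of leafwise conditional measures. If you want to rescue your slicing strategy, the clean way is to perform the coarea decomposition upstairs on $U$ with respect to $\tilde\omega^n$ and the invariant function $H\circ T=H|_U$, where the $S^1$-invariance of every conditional measure is immediate --- but at that point the one-line averaging argument above is strictly shorter.
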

Here $\Lambda_{can}$ is the canonical radius of the HMAE associated to the data $\{0\}\subset (B,dd^c\phi).$

The reproducing property in \eqref{eq:reproducing} arises in the theory of complex moments and the planar Hele-Shaw flow in fluid mechanics (see Section \ref{sec:previous}) and gives a generalization of this flow to all complex dimensions.

 \subsection{Outline of Proofs} The idea of the proof of Theorem \ref{defconethm} is to begin by picking a point $p\in Y$ and finding a regular solution to the HMAE in a neighbourhood of the proper transform $\mathcal D_p$ of $\{p\}\times D$ inside the deformation to the normal cone $\mathcal N_Y$.    It turns out to be possible, by a simple change of coordinates, to translate this to the Dirichlet problem for the HMAE to one on a product $U\times D$.     The cost in this change is that whereas we originally were seeking a solution with $S^1$-invariant boundary data, the new boundary data will no longer be $S^1$-invariant.   However it will be close to being $S^1$-invariant near $\{p\}\times D$, and we will prove that this is sufficient to ensure that this new Dirichlet problem has a regular solution (the proof relies on a correspondence between solutions to the HMAE and foliations by holomorphic discs with boundary in some given Lagrangian submanifold, and the existence of the latter is unchanged by small pertubations of this submanifold).   Thus for any point $p\in Y$ we get a regular solution to the HMAE near $\mathcal D_p$.

 We then observe that regular solutions to HMAE enjoy some strong uniqueness properties (even if they are defined locally) coming from the existence of the associated foliation.  We use this to show that these local solutions glue together to give a regular solution in a neighbourhood of the proper transform $\mathcal Y$ of $Y\times D$.  The fact that the solutions we produce are cohomologous to $\pi_X^*\omega$ gives the existence of a Hamiltonian function, and we use this to define the notion of canonical radius $\Lambda_{can}$ and to get a canonical regular solution to the HMAE.

To prove that, if $Y$ is compact (or more generally if $\Lambda_{can}>0$), the weak solution agrees with the canonical regular one in a neighbourhood of $\mathcal{Y}$ we use the Legendre transform of the potential $\Phi$ to deduce that on some neighbourhood of $\mathcal{Y}$ this $\Phi$ is bounded from above by a local potential for the weak solution. The fact that $dd^c\Phi$ is harmonic along the leaves of the Monge-Amp\`ere foliation allows us to use the maximum principle to deduce the reverse inequality.

For the result about the envelopes $\psi_{\lambda}$, the key point is that these can be shown to agree with the Legendre transform of the potential for the canonical regular solution. The associated foliation on $\mathcal N_Y$ can then be translated to this optimal regularity result on $X$.

\subsection{Comparison with previous works}\label{sec:previous}  {\bf Tubular Neighbourhoods:}   It is well known that holomorphic tubular neighbourhoods around a complex submanifold $Y\subset X$ need not exist.  In fact if one does exist then the exact sequence $0\to TY\to TX|_Y \to N_Y\to 0$ must split holomorphically.  There are stronger results on non-existence, for instance it is a theorem of Van de Ven \cite{Van} that the only connected submanifolds of projective space that admit holomorphic tubular neighbourhoods are linear subspaces.

The local structure around a holomorphic submanifold $Y$ goes back at least as far as the seminal works of Grauert \cite{Grauert} and Griffiths \cite{Griffiths}, in which some basic notions in algebraic geometry are developed, for example infinitesimal neighbourhoods, notions of positivity of vector bundles and subvarieties, and some vanishing theorems  (see \cite{Movasati} for a survey).   In this, and related works, what is often sought after is a holomorphic transverse foliation by which what is meant a holomorphic family of disjoint subvarieties $S_y$ for $y\in Y$ such that $S_y\cap Y=\{y\}.$ The existence of such a foliation places strong restrictions on the normal bundle. In contrast, the Monge-Amp\`ere foliations used here are typically not holomorphic, and thus no assumptions on the normal bundle are needed. 

Infinitesimal neighbourhoods in algebraic geometry (now called ``formal neighbourhoods") appear from the work of Zariski and Grothendieck.  One of the main uses of the deformation to the normal cone in algebraic geometry is to get around the non-existence of holomorphic tubular neighbourhoods (as used, for example, in intersection theory).  Thus it is not particularly surprising that this same deformation appears here.\vspace{2mm}

\noindent {\bf Geodesics in the space of K\"ahler Metrics:}  The Dirichlet problem for the complex HMAE has a long history, going back at least as far as the fundamental work of Bedford-Taylor \cite{BedfordTaylor}.   The existence of smooth (or regular) solutions is a difficult and much studied problem and can depend in a subtle way on the boundary data (see \cite{Guedjbook} for a survey).   Following work of Semmes \cite{Semmes}, Mabuchi \cite{Mabuchi2}, Donaldson \cite{Donaldson}, it is known that in the case of compact fibres the existence of solutions can be interpreted as geodesic rays in the space of K\"ahler metrics, and the existence of regular solutions has deep implications to the theory of extremal metrics  (for example \cite{ChenTian,RossNystrom, Sturm4,Sturm1,Sturm2, Yanir, Yanir2, SongZelditch} among others).   In the above we consider this problem for the deformation to the normal cone of a submanifold $Y$.   This family is the simplest non-trivial example of a \emph{test-configuration} that lies at the heart of the Yau-Tian-Donaldson conjecture connecting the existence of a constant scalar curvature K\"ahler metric with the algebro-geometric notion of K-stability, and has been studied from a number of points of view (for example \cite{Apostolov, Hwang, Ross1, Ross2, Nystrom}).   So another interpretation of Theorem \ref{thm:mainhmae2} is that the Phong-Sturm geodesic in the space of K\"ahler metrics associated to the test-configuration given by the deformation to the normal cone of a submanifold $Y$ is regular near the orbit of $Y$. \vspace{2mm}

\noindent {\bf Envelopes and Equilibrium Sets:} The kind of envelopes we consider above are basic objects in the study of the Monge-Amp\`ere equation (see for example \cite[Ch. 6]{Klimek}).  They are related to the asymptotic behaviour of the partial Bergman kernel involving holomorphic sections that vanish to a particular order along the submanifold $Y$.  This was first studied in detail in the toric case by  Schiffman-Zelditch \cite{Shiffman} who introduced the name \emph{forbidden region} essentially for the complement of what we call the equilibrium set.  This was then taken up by Berman \cite{Berman2} in the general projective case  (from whom we have taken the terminology \emph{equilibrium set}) who proves, among other things, that the envelopes are $C^{1,1}$, and then again by Berman-Demailly \cite{BermanDemailly} in the setting of big K\"ahler classes.  We refer the reader also to Ross-Singer \cite{RossSinger} and Ross-Witt-Nystr\"om \cite{RossNystrom2} where these envelopes are considered further.

To determine $\partial S_{\lambda}$ and $\psi_{\lambda}$ from $\phi$ as in Theorem \ref{thm:flowpoint} is a \emph{free boundary} type of problem. It is interesting to compare it with its "fixed boundary" analogue. Given a domain $\Omega\subset \mathbb{C}^n$ and a point $z\in \Omega$ the pluricomplex Green function on $\Omega$ with logarithmic pole at $z$ is defined as $$u_z:=\sup\{\psi\in PSH(\Omega): \psi\leq 0, \nu_z(\psi)\geq 1\}.$$ Note that because of the homogeneity of the boundary condition, scaling the size of the pole only has the effect of scaling the original solution. Lempert proved in \cite{Lempert} that if $\Omega$ is strictly convex and smoothly bounded, then the pluricomplex Green function has optimal regularity, in the sense that there exists a family of discs $D_q$, all of which are attached to the boundary of $\Omega$, which foliates $Bl_z\Omega,$ and such that $u_z$ is harmonic along each disc $D_q$ except for having a simple pole at $z.$ This regularity is known not to hold for more general types of domains (see e.g. \cite{BedfordDemailly}), but when $\Omega$ is smoothly bounded and strictly pseudoconvex Blocki \cite{Blocki} has shown that $u_z\in C^{1,1}(\Omega\setminus \{z\})$ (see also \cite{Guan}).\vspace{2mm}

\noindent {\bf The Hele-Shaw flow: } In the particular case that $Y$ is a single point in a Riemann surface $(\Sigma,\omega)$, the tubular neighbourhoods produced here describe a model of the Hele-Shaw flow with empty initial condition and permeability encoded by $\omega$. This has been studied by Hedenmalm-Shimorin \cite{Hedenmalm} for real-analytic hyperbolic Riemann surfaces. Using the Hele-Shaw flow they construct an exponential map, which has the property that the image of any concentric circle intersects orthogonally the image of any ray emanating from zero. One can check that this implies that their exponential map differs from ours, and does not share the properties listed in Theorem \ref{thm:tubularmain}. The Hele-Shaw flow on a general Riemann surface is the topic of previous work of the authors \cite{RossNystromHeleShaw} in which we prove the $n=1$ case of Theorem \ref{thm:flowpoint}.  The link is made through the fact that the Hele-Shaw flow is characterized by its complex moments, or said another way the reproducing property \eqref{eq:reproducing}.  In this way we can consider Theorem \ref{thm:flowpoint} as a generalization of the Hele-Shaw flow to a point in $\mathbb C^n$ (but do not claim that this has any fluids interpretation).\vspace{2mm}

\noindent{\bf Acknowledgements: } We wish to thank Robert Berman and  Bo Berntdsson for their encouragement and input, and to S\'ebastien Boucksom who at an early stage pointed out the connection to tubular neighbourhoods in the symplectic category. We also thank the referees for their close reading and constructure comments.  During this work JR was supported by an EPSRC Career Acceleration Fellowship (EP/J002062/1). DWN has received funding from the People Programme (Marie Curie Actions) of the European Union's Seventh Framework Programme (FP7/2007-2013) under REA grant agreement no 329070, and previously by Chalmers University and the University of Gothenburg.

\section{Preliminaries}
\subsection{Notation and Terminology}
\newcommand{\smooth}{smooth}
\noindent{\bf Deformation to the normal cone:} Let $Y$ be a complex submanifold of a complex manifold $X$ (which we always take to be connected, closed as a subset and not equal to $X$).    The \emph{deformation to the normal cone} $\mathcal N_Y$ of $Y$ is the blowup 
$$\mathcal N_Y \stackrel{\pi}{\to} X\times D$$ 
of $X\times D$ along $Y\times \{0\}$.  The \emph{exceptional divisor} $E=\pi^{-1}(Y\times \{0\})$ is isomorphic to the projective completion $\mathbb P(N_Y\oplus\mathbb C)$ of the normal bundle $N_Y$ of $Y$.   The map $\pi$ is an isomorphism away from $E$ and the \emph{proper transform} of a subvariety $Z\subset X\times D$ not contained in $Y\times \{0\}$  is the closure (in the classical topology) of $\pi^{-1}(Z \setminus Y\times\{0\})$ in $\mathcal N_Y$. We also let $\pi_X$ and $\pi_D$ denote the composition of $\pi$ with the projections to $X$ and $D$ respectively.



\noindent {\bf Plurisubharmonic functions: } The following notions can be found in \cite{Demailly2,Klimek}.  For an open subset $U$ of a complex manifold $X$ we denote by $\PSH(U)$ the space of plurisubharmonic functions $\phi\colon U\to \mathbb R\cup \{-\infty\}$ on $U$.  We say a function $\phi$ on $U$ is \emph{pluriharmonic} if both $\phi$ and $-\phi$ are plurisubharmonic, and $\phi$ is \emph{strictly plurisubharmonic} if its curvature $dd^c\phi$ is a strictly positive current. Any K\"ahler form $\omega$ can be written locally as $dd^c\phi$ for some smooth strictly plurisubharmonic function $\phi$, which is uniquely defined up to addition by pluriharmonic functions.  Given a closed real $(1,1)$-form $\alpha$ on $X$ we let $PSH(\alpha)$ be the set of upper-semicontinuous $L^1_{loc}$ functions $\psi$ such that $\alpha + dd^c \psi$ is a positive current.  So if $\alpha = dd^cu$ on $U$ then $\psi\in\PSH(\alpha)$ if and only if $u + \psi \in PSH(U)$.

\noindent {\bf Foliations: }  A complex $1$-dimensional foliation on a $n$-dimensional complex manifold $X$ consists of a set $\mathcal F$ of (not necessarily closed) $1$-dimensional complex submanifolds of $X$ called \emph{leaves} that cover $X$ such that the following holds:  there is a cover of $X$ by charts $\{U_\alpha\}$ and coordinate projections $f_{\alpha}\colon U_{\alpha}\to \mathbb R^{2n-2}$ that form a $\Gamma_{2n-2}$ structure in the sense of Haefliger \cite{Haefliger} such that for each $\mathcal L\in \mathcal F$ the set $\mathcal L\cap U_{\alpha}$ is the union of connected components of $f_\alpha^{-1}(c)$ \cite[p547]{BedfordKalka}.  By the Frobenius Theorem, any involutive $k$-dimensional distribution of the tangent bundle of $X$ is integrable, and thus this distribution is in fact the tangent space to the leaves of a uniquely defined foliation.  Moreover if this distribution is complex (i.e.\ consists of complex subspaces of the tangent bundle) then the leaves of this foliation are complex submanifolds by the Theorem of Levi-Civita (see the Appendix of \cite{Klimek}).



\subsection{The Dirichlet problem for the Homogeneous Monge-Amp\`ere Equation}

Suppose that $\pi'\colon V'\to \mathbb C$ is a surjective map from a complex $(n+1)$-dimensional manifold $V'$ to $\mathbb C$ such that all the fibers $V'_{\tau}: = \pi'^{-1}(\tau)$ are manifolds for all $\tau \in \mathbb C$.  We let $V: = \pi^{-1}(D)$ and $\pi:= \pi'|_{V} \colon V\to D$.  When dealing with functions (or forms) on $V$ we consider it as a $(2n+2)$-real dimensional smooth manifold with boundary.  In this paper $V$ will always be either a product $V=X\times D$ or a subset of the deformation to the normal cone $\mathcal N_Y$ of a submanifold $Y\subset X$ (in which case $V'$ is $X\times \mathbb C$ or an open subset of the blowup of $X\times \mathbb C$ along $Y\times \{0\}$ with the obvious projection). 

\begin{definition}
We shall refer to a smooth family of K\"ahler forms $\omega_{\tau}$ on $V_{\tau}$ for $\tau\in S^1$ as \emph{boundary data} for $V$. If $V$ is a subset of $\mathcal{N_Y}$ then the K\"ahler form $\omega$ on $X$ induces the boundary condition $\omega_{\tau} = \omega_{|V_{\tau}}$ which we refer to as the \emph{boundary condition induced by} $\omega$.     
\end{definition}

\begin{definition}
We say that a smooth closed real $(1,1)$-form $\Omega$ on $V$ is a \emph{regular solution to the HMAE} with boundary data $\omega_{\tau}$ if
\begin{enumerate} \label{eq:hmae2}
\item $\Omega_{|V_{\tau}}$ is K\"ahler for all $\tau\in D,$
\item $\Omega_{|V_\tau}=\omega_{\tau}$ for all $\tau\in S^1$, and
\item $\Omega^{n+1}=0$ on $V$.
\end{enumerate}
\end{definition}


The distribution defined by the kernel of a regular solution $\Omega$ to the HMAE is integrable (since $\Omega$ is closed), complex (since $\Omega$ is $(1,1)$), and one-dimensional and transverse to the fibers (since $\Omega$ is nondegenerate along the fibers) (see \cite{BedfordKalka} or \cite{Donaldson}). Thus by the Frobenius integrability theorem there is $1$-dimensional complex foliation of $V$, and by construction the restriction of $\Omega$ to each such leaf vanishes. We shall refer to this as the \emph{Monge-Amp\`ere foliation} determined by $\Omega$.  


\subsection{A local existence result for the HMAE}

We now present a local existence result for the HMAE.  Our proof uses the connection between such solutions and foliations by holomorphic discs,   and is based on the Donaldson's proof \cite{Donaldson} of the following theorem:

\begin{theorem}\label{thm:donaldsonopenness}
Let $(X,\omega)$ be compact K\"ahler and set $V=X\times D$. Suppose that for some function $F(z,\tau)$ on $V$ the form $\Omega_F=\pi_X^*\omega+dd^cF$ is a regular solution to  the HMAE  with boundary data $\omega_{\tau}:=\omega+dd^cF(\cdot,\tau)$. Then any smooth function $g = g(z,\tau)$ on $X\times S^1$ that is sufficiently close to $F_{|X\times S^1}$ with respect to the $C^\infty$-topology can be uniquely extended to smooth function $G$ on $V$ such that $\Omega_G=\pi_X^*\omega+dd^cG$ is a regular solution to  the HMAE  with boundary data $\omega + dd^c g(\cdot,\tau)$.
\end{theorem}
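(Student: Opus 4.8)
The plan is to deduce this from the inverse/implicit function theorem in Banach spaces, linearising the Monge--Amp\`ere operator at the given regular solution $\Omega_F=\pi_X^*\omega+dd^cF$. First I would strip the problem down to a scalar Dirichlet problem. Since $X$ is compact, the requirement that $\Omega_G|_{V_\tau}=\omega+dd^cg(\cdot,\tau)$ for $\tau\in S^1$ forces $G(\cdot,\tau)-g(\cdot,\tau)$ to be pluriharmonic on $X$, hence constant; so prescribing the boundary data is the same as prescribing $G|_{X\times S^1}$, up to a harmless $\tau$-dependent additive constant fixed by a normalisation. Likewise fibrewise positivity (condition (1)) is an open condition, so it holds automatically for any $G$ that is $C^2$-close to $F$ and needs no further attention. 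Thus it suffices to solve, for $g$ near $F|_{X\times S^1}$, the single scalar equation
$$ \mathcal P(G):=\frac{(\pi_X^*\omega+dd^cG)^{n+1}}{\mu}=0,\qquad G|_{X\times S^1}=g, $$
for a fixed smooth volume form $\mu$ on $V$; by hypothesis $\mathcal P(F)=0$.

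Next I would compute the linearisation of $\mathcal P$ at $F$ in the interior variable: writing $\Omega=\Omega_F$, one has $D\mathcal P|_F(\phi)=(n+1)\,dd^c\phi\wedge\Omega^{n}/\mu$ for $\phi$ vanishing on $X\times S^1$. The crucial point is that, because $\Omega^{n+1}=0$ and $\Omega$ is fibrewise positive, $\Omega^n$ is a semipositive $(n,n)$-form whose kernel is exactly the one-dimensional tangent distribution of the Monge--Amp\`ere foliation $\mathcal F_\Omega$. Working at a point in holomorphic coordinates $(z_1,\dots,z_n,w)$ adapted to $\mathcal F_\Omega$ (so that $\partial_w$ spans $\ker\Omega$ there) gives $dd^c\phi\wedge\Omega^n=c\,\phi_{w\bar w}\,\mu$ with $c>0$; in particular the linearised operator $L:=D\mathcal P|_F$ has no transverse second derivatives and no lower-order terms, and its restriction to each leaf $\mathfrak L\in\mathcal F_\Omega$ — a holomorphic disc meeting $X\times S^1$ in its boundary circle — is (a positive multiple of) the Laplace--Beltrami operator of a conformal metric on the disc. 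So $L$ is \emph{elliptic along the leaves} and its Dirichlet problem decouples leafwise.

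The heart of the matter is then to prove that $L$ is an isomorphism from $\{\phi\in C^{k,\alpha}(V):\phi|_{X\times S^1}=0\}$ onto $C^{k-2,\alpha}(V)$. Injectivity and leafwise surjectivity are immediate from the unique solvability of the Dirichlet problem $\Delta u=f$, $u|_{\partial D}=0$ on the disc. The real work is regularity: one must show that solving this family of disc Dirichlet problems, over the smoothly varying family of leaves $\mathcal F_\Omega$ (which is as regular as $\Omega_F$, hence smooth), produces a solution that is $C^{k,\alpha}$ on all of $V$ and not merely along each leaf. This is a transversally degenerate Schauder theory: one couples the tangential (disc) Schauder estimates with difference-quotient estimates in the transverse directions, bootstrapping using the smoothness of the foliation and of $\Omega^n$. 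I expect this degenerate-elliptic regularity statement — together with the bookkeeping of H\"older norms, and the verification that $C^2$-closeness of the boundary data (rather than $C^{k,\alpha}$-closeness) suffices to land us in the basin of the inverse function theorem, using that regular solutions have automatically good interior regularity and are in fact smooth — to be the main obstacle.

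Finally, with $L$ an isomorphism and $\mathcal P$ a smooth map of Banach spaces, the inverse (equivalently implicit) function theorem yields, for every $g$ sufficiently $C^2$-close to $F|_{X\times S^1}$, a unique $G$ near $F$ with $\mathcal P(G)=0$ and $G|_{X\times S^1}=g$; fibrewise positivity is then automatic, a bootstrap shows $G$ is smooth, and hence $\Omega_G=\pi_X^*\omega+dd^cG$ is a regular solution to the HMAE with boundary data $\omega+dd^cg(\cdot,\tau)$. Uniqueness of the extension is part of the conclusion of the inverse function theorem, and if desired can be upgraded to a global statement by the maximum principle applied along the leaves of the Monge--Amp\`ere foliation, exactly as in the uniqueness assertions of Theorem \ref{defconethm}.
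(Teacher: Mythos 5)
The paper does not reprove this result; it is quoted from Donaldson \cite{Donaldson}, whose argument recasts the problem as a deformation problem for a $2n$-parameter family of holomorphic discs in an auxiliary bundle $W\to X$, with boundaries constrained to lie in the family of totally real (``LS'') submanifolds determined by the boundary data. Your proposal instead attempts a direct implicit function theorem argument on the scalar Monge--Amp\`ere operator, and this is where it breaks down. The linearisation $L\phi=(n+1)\,dd^c\phi\wedge\Omega_F^{n}/\mu$ is, as you correctly compute, a second-order operator that is a Laplacian along each leaf of the Monge--Amp\`ere foliation and contains \emph{no} transverse derivatives at all. Solving the Dirichlet problem leaf by leaf therefore gains two derivatives in the leaf direction and \emph{zero} derivatives transversally: in the model case $L\phi=\phi_{w\bar w}$ on $X\times D$ with the product foliation, the solution $\phi(z,w)=\int_D G(w,w')f(z,w')$ is exactly as regular in $z$ as $f$ is, no better. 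Your difference-quotient/bootstrap scheme only confirms this: each transverse derivative of $\phi$ solves a disc problem whose data involves the same-order transverse derivative of $f$, so transverse regularity is \emph{preserved}, not improved. Hence $L$ is injective but not surjective from $\{\phi\in C^{k,\alpha}(V):\phi|_{X\times S^1}=0\}$ onto $C^{k-2,\alpha}(V)$; there is a genuine loss of derivatives and the Banach-space inverse function theorem does not apply. This is not bookkeeping to be deferred --- it is the central analytic obstruction, and overcoming it by these means would require a Nash--Moser scheme with tame estimates that you have not supplied and that are not obviously available.

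Donaldson's route avoids the loss of derivatives entirely: the objects being deformed are the individual holomorphic discs of the foliation, lifted to $W$, with boundary in the LS-submanifold determined by $g$ (locally the graph of $\partial g(\cdot,\tau)$, which is controlled in $C^1$ by the $C^2$-norm of $g$ --- this is also why $C^2$-closeness is the natural hypothesis). For each disc the linearised problem is a $\bar\partial$-equation on the disc with totally real boundary conditions, a genuinely elliptic Fredholm boundary value problem of index $2n$ with vanishing cokernel, so the ordinary implicit function theorem applies disc by disc with smooth dependence on parameters; a separate argument (\cite[Prop 2]{Donaldson} and the surrounding discussion, invoked in the proof of Theorem \ref{thm:donaldsonopennesslocal} above) shows the perturbed evaluation map remains a diffeomorphism and the family reassembles into a potential $G$ with $\Omega_G$ regular. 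To salvage a direct PDE proof you would have to explain how to recover the two missing transverse derivatives; as written, the key step of your argument fails.
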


We recall some ingredients of the proof.    Semmes \cite{Semmes} constructs a fiber bundle $W_X$ over $X,$ locally modelled on the cotangent bundle of $X,$ such that K\"ahler metrics of $X$ cohomologuous to $\omega$ correspond precisely to certain $LS$-submanifolds of $W_X.$ The boundary data $\omega_{\tau}:=\omega+dd^cF(\cdot,\tau)$ then correspond to a $LS$-submanifold $\Lambda_{\tau}$ of $W_X\times D.$ Donaldson proves in \cite{Donaldson} that regular solutions to the HMAE over $X\times D$ correspond to certain families of holomorphic discs in $W_X\times D$ that cover $D$ and attach to $\Lambda_{\tau}$ along their boundary. The projection of these discs to $X\times D$ will form the Monge-Amp\`ere foliation associated to the regular solution. The proof of Theorem \ref{thm:donaldsonopenness} then relies on the deformation theory for holomorphic discs attached to totally real submanifolds (an $LS$-submanifold is in particular totally real).

By using similar ideas we next prove a local existence theorem for regular solutions to the HMAE in a particular setting (what we do here is somewhat easier in that it only involves rather specific deformations of the $LS$-submanifold).

\begin{theorem}\label{thm:localexistence}
Let $\phi$ be a smooth strictly plurisubharmonic function on the open unit ball $B_1\subseteq \mathbb{C}^n$ and assume that
\begin{equation} \label{locexisteq}
\phi(z)=|z|^2+O(|z|^3).
\end{equation}
Let $\phi_{\tau}(z):=\phi(\tau z',z'')$ for $\tau\in S^1$ where $z'=(z_1,...,z_r)$ and $z''=(z_{r+1},...,z_n)$ for some number $r$. Then there exists a $\delta\in (0,1]$ and a regular solution $\Omega$ to the HMAE on $B_{\delta} \times D$ with boundary data $\omega_{\tau}:=dd^c\phi_{\tau}(z)|_{B_{\delta}}$. The solution $\Omega$ is invariant under the following $S^1$-action $\rho$:
\[ \rho(e^{i\theta}) \cdot (z',z'',\tau):= (e^{i\theta} z',z'',e^{-i\theta}\tau) = (e^{i\theta} z_1,\ldots,e^{i\theta} z_r,z_{r+1},\ldots,z_n,e^{-i\theta}\tau).\] Furthermore there exists a smooth function $\Phi'$ on $B_{\delta}\times D$ such that $dd^c\Phi'=\Omega$ and $\Phi'(z,\tau)=\phi_{\tau}(z)$ for $\tau\in S^1.$
\end{theorem}  

\begin{proof}
Our K\"ahler manifold is in this case $B_1$ and the fiber bundle $W_X$ is identified with $B_1\times \mathbb{C}^n.$ Let $w$ denote the variables of the fiber $\mathbb{C}^n$. The $LS$-submanifold $\Lambda_{\tau}$ corresponding to the boundary data $\phi_{\tau}(z)$ is the graph of $\frac{\partial}{\partial z}\phi_{\tau}(z)$ over $B_1\times S^1.$ We first note that $$\tau \mapsto (0,0,\tau)\in B_1\times \mathbb{C}^n\times D$$ is a holomorphic disc which attaches along its boundary to $\Lambda_{\tau}$ thanks to the assumption (\ref{locexisteq}).

Consider the case when $\phi(z)=|z|^2.$ Then for any $z\in B_1$ we have a holomorphic disc $$\tau\mapsto (z,\bar{z},\tau)$$ which attaches to $\Lambda_{\tau}$ along its boundary.  These form a $2n$-dimensional family of such discs, whose projections to $B_1\times D$ forms a foliation (the standard one). Donaldson proves in \cite[p192]{Donaldson} that in this case the discs are superregular (we refer the reader to \cite[Definition 1]{Donaldson} for the definition of superregular). In particular they are regular and have index $2n,$ so any holomorphic disc nearby $\tau \mapsto (0,0,\tau)$ that attaches to $\Lambda_{\tau}$ along its boundary must be of the form $\tau \mapsto (z,\bar{z},\tau)$ (which can also be seen directly).

The property of a disc to be superregular only depends on the tangent space of $\Lambda_{\tau}$ inside the total tangent bundle restricted to the disc. Since this is unchanged for the disc $\tau \mapsto (0,0,\tau)$ when $\phi(z)=|z|^2$ is replaced with $\phi(z)=|z|^2+O(|z|^3)$ we conclude that the disc is still superregular.  Thus it belongs to a family of discs attaching to $\Lambda_{\tau}$ whose projections foliate an open subset $V\subset B_1\times D$. Since $\phi_{\tau}$ is invariant under the $S^1$-action we can assume that $V$ is as well. We can also assume that each fiber $V_{\tau}$ is open and diffeomorphic to a ball.

We now argue exactly as in \cite[Proposition 1, Lemma 3]{Donaldson} to deduce the existence of a regular solution $\Omega$ to the HMAE on some $B_{\delta}\times D \subseteq V$ with the correct boundary data. The $S^1$-invariance of the boundary data and uniqueness of the discs ensure that $\Omega$ will be $S^1$-invariant.

For the final statement, by the $dd^c$-lemma we can find some function $\Phi''$ on $B_{\delta}\times D$ such that $dd^c\Phi''=\Omega,$ and we can assume $\Phi''$ to be $S^1$-invariant. We then have that $h(z):=\Phi''(z,1)-\phi(z)$ is pluriharmonic on $B_{\delta}$.   It now follows that $$\Phi'(z,\tau):=\Phi''(z,\tau)-h(\tau z)$$ has the desired properties. 
\end{proof}

\section{Local Regular Solutions}

\subsection{Local Existence}\label{sec:local}

Let $Y$ be a complex submanifold of a K\"ahler manifold $(X,\omega)$ and consider the family $\mathcal N_Y\stackrel{\pi}{\to} X\times D$ given by the deformation to the normal cone of $Y$.   We recall that we say that a leaf of a one dimensional foliation on a subset of $\mathcal N_Y$ is \emph{complete} if the projection by $\pi_D$ from this leaf to the base $D$ is a surjection.  

\begin{definition}
For $p\in Y$ we let $\mathcal D_p$ denote the proper transform of $\{p\}\times D$ in $\mathcal N_Y$.
\end{definition}

\begin{definition}\label{def:localregularsolution}
A \emph{local regular solution} to  the HMAE  with boundary data induced by $\omega$ for a subset $A\subset Y$ consists of a pair $(V,\Phi)$ such that
  \begin{enumerate}
  \item $V$ is an open $S^1$-invariant neighbourhood of   the closure (in the classical topology) of $\pi^{-1}(A\times D\setminus Y\times \{0\})$,     that does not meet the singular locus of $\pi_D^{-1}(0)$ and so all fibers $V_{\tau}$ for $\tau\in D$ are smooth manifolds,
  \item $\Phi$ is a smooth real valued $S^1$-invariant function on $V$ which is zero on $V_{\tau},$ $\tau\in S^1,$
  \item $\Omega:=\pi_X^*\omega+dd^c\Phi$ is a regular solution to the HMAE on $V$, 
  \item $V$ is the union of complete leaves of the Monge-Amp\`ere foliation determined by $\Omega$, and if a leaf intersects $\mathcal{Y}$ then it must be equal to $\mathcal{D}_p$ for some $p\in Y$.
  \end{enumerate}
\end{definition}

Note that it follows from the definition that if $(V,\Phi)$ is a local regular solution to  the HMAE  for $A\subseteq Y$ then $\mathcal{D}_p$ is part of the Monge-Amp\`ere foliation for each $p\in A$.

\begin{proposition}\label{prop:hmaelocal}
Let $p\in Y$.  Then there exists a local regular solution $(V,\Phi)$ to  the HMAE  for the subset $\{p\}$.
\end{proposition}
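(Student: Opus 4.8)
The plan is to work in a chart of $\mathcal N_Y$ adapted to $\mathcal D_p$ in which the deformation to the normal cone becomes a product, and then apply Donaldson's Openness Theorem in the form of Theorem~\ref{thm:donaldsonopennesslocal}. First I would choose holomorphic coordinates $z=(z',z'')\in\mathbb C^r\times\mathbb C^{n-r}$ centred at $p$ (where $r$ is the codimension of $Y$) in which $Y=\{z'=0\}$, and write $\omega=dd^c\phi$ locally for a smooth strictly plurisubharmonic $\phi$. Subtracting from $\phi$ the real part of its holomorphic $2$-jet (which is pluriharmonic and so does not change $\omega$) and applying a $\mathbb C$-linear change of coordinates, I may assume $\phi(z)=|z|^2+O(|z|^3)$ near $0$, so that $\|\phi-|z|^2\|_{C^2(B_\delta)}=O(\delta)$. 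In $\mathcal N_Y$, the chart obtained by dividing by $\tau$ has coordinates $(u,z'',\tau)$ with $\pi(u,z'',\tau)=(\tau u,z'',\tau)$; in these coordinates $\mathcal D_p=\{u=0,\ z''=0\}$, the proper transform $\mathcal Y$ of $Y\times D$ is $\{u=0\}$, and the lifted action of \eqref{circleaction} is $e^{i\theta}\cdot(u,z'',\tau)=(e^{i\theta}u,z'',e^{-i\theta}\tau)$, exactly the action $\rho$ appearing in Lemma~\ref{lemma:invariantleaves}. The boundary data induced by $\omega$ is, along $V_\tau$ for $\tau\in S^1$, the form $dd^c\psi_\tau$ with $\psi_\tau(u,z'')=\phi(\tau u,z'')$, and it is $\rho$-invariant in the sense that $\psi_{e^{-i\theta}\tau}(e^{i\theta}u,z'')=\psi_\tau(u,z'')$ (the $\omega$-induced data on $\mathcal N_Y$ being always $S^1$-invariant).

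Next I would rescale and truncate so that Theorem~\ref{thm:donaldsonopennesslocal} applies. For small $\delta>0$ set $\Psi_\tau(\zeta):=\delta^{-2}\phi(\tau\delta\zeta',\delta\zeta'')$ on the unit ball $B_1$, with $\zeta=(\zeta',\zeta'')$; since $|\tau|=1$ and $\phi=|z|^2+O(|z|^3)$ one gets $\|\Psi_\tau-|\zeta|^2\|_{C^2(B_1\times S^1)}=O(\delta)$. Choosing a cut-off $\chi$ that equals $1$ on $B_{1/2}$, vanishes near $\partial B_1$, and depends only on $(|\zeta'|,|\zeta''|)$, I put $\hat\Psi_\tau:=|\zeta|^2+\chi\cdot(\Psi_\tau-|\zeta|^2)$. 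For $\delta$ small this family is strictly plurisubharmonic, equals $|\zeta|^2$ near $\partial B_1$, obeys the $C^2$-bound of Theorem~\ref{thm:donaldsonopennesslocal}, and stays $\rho$-invariant. Theorem~\ref{thm:donaldsonopennesslocal} then yields a unique smooth $\hat\Phi$ on $B_1\times D$ with $\hat\Phi(\cdot,\tau)=\hat\Psi_\tau$ for $\tau\in S^1$, equal to $|\zeta|^2$ near $\partial B_1\times D$, with $dd^c\hat\Phi$ a regular solution of the HMAE; and, by the ``Moreover'' clause of that theorem (since $\rho$ covers the inverse rotation of $D$ and fixes $\hat\Psi_\tau$ and $|\zeta|^2$), the solution $\hat\Phi$ is itself $\rho$-invariant.

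Finally I would transport this back. With $G(u,z'',\tau)=(u/\delta,z''/\delta,\tau)$, a biholomorphism commuting with $\pi_D$, set $\Theta:=\delta^2\,G^*\hat\Phi$, so that $dd^c\Theta=\delta^2 G^*(dd^c\hat\Phi)$ is again a regular solution of the HMAE on $\{|(u,z'')|<\delta\}\times D$ whose Monge-Amp\`ere foliation is the $G$-pullback of that of $dd^c\hat\Phi$. On $\{|(u,z'')|<\delta/4\}$ (where the rescaled cut-off is $\equiv1$) the boundary data of $dd^c\Theta$ along $V_\tau$, $\tau\in S^1$, is exactly $dd^c\psi_\tau=\omega_{|V_\tau}$; hence, putting $\Phi:=\Theta-\phi(\tau u,z'')$, the form $\Omega:=\pi_X^*\omega+dd^c\Phi$ equals $dd^c\Theta$, is a regular solution of the HMAE, and $\Phi$ is a smooth real $S^1$-invariant function vanishing on $V_\tau$ for $\tau\in S^1$ (all invariances descending from those of $\hat\Phi$). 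By Lemma~\ref{lemma:invariantleaves} applied to $dd^c\hat\Phi$ the discs $\{(0,\zeta'')\}\times D$ are leaves, so the discs $\{(0,z'')\}\times D=\mathcal D_{p'}$ are leaves of $\Omega$, and by uniqueness of leaves any leaf meeting $\mathcal Y=\{u=0\}$ is some $\mathcal D_{p'}$. Since $dd^c\hat\Phi$ is a small perturbation of the trivial solution $dd^c|\zeta|^2$ whose leaves are the complete constant discs, for $\delta$ small the leaves of $dd^c\hat\Phi$ through $B_{1/4}\times\{0\}$ are complete and contained in $B_{1/2}\times D$; I then let $V$ be the ($S^1$-invariant) union of the corresponding complete leaves of $\Omega$, which is a neighbourhood of $\mathcal D_p$ with all fibres smooth manifolds. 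Thus $(V,\Phi)$ is a local regular solution in the sense of Definition~\ref{def:localregularsolution}.

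The step requiring real care is the preparation of the boundary data, where one must \emph{simultaneously} make it flat (equal to $|\zeta|^2$) near $\partial B_1$, $C^2$-close to $|\zeta|^2$, and invariant under the rotation $\rho$ of Lemma~\ref{lemma:invariantleaves}: the rescaling exploits precisely that $\phi$ osculates $|z|^2$ to second order at $p$, and the fortunate point is that in the $\tau$-chart the lifted circle action is exactly $\rho$ and the $\omega$-induced data is automatically $\rho$-invariant. The only remaining fiddly point is the last shrinking of $V$ to ensure that \emph{all} its leaves are complete and that it meets $\mathcal Y$ only in the $\mathcal D_{p'}$; this is routine once one knows the solution is a perturbation of the trivial one, but it is where one must be slightly careful about the size of $\delta$.
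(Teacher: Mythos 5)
Your proposal is correct and follows the same overall architecture as the paper: pass to the $\rho$-chart in which $\mathcal N_Y$ becomes $B_\delta\times D$ and the lifted circle action is $\rho$, normalize coordinates so the potential osculates $|z|^2$ to second order, modify the boundary data so that it is simultaneously equal to $|z|^2$ near $\partial B_1$, globally $C^2$-small, and $\rho$-invariant, invoke Theorem~\ref{thm:donaldsonopennesslocal} together with its invariance clause, and finally use Lemma~\ref{lemma:invariantleaves} plus continuity of the foliation to verify conditions (1)--(4) of Definition~\ref{def:localregularsolution}. This is exactly the content of Propositions~\ref{prop:translation} and \ref{prop:approxharmonic} and the paper's proof of Proposition~\ref{prop:hmaelocal}.

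The one place where you genuinely diverge is the flattening step. The paper glues $\phi$ to $(1+a)|z|^2-2b$ via a regularized maximum (Lemma~\ref{lem:regmax}), which requires the quantitative $C^2$ estimate \eqref{eq:regmaxc2} and produces a potential agreeing with $\phi$ only up to the multiplicative constant $c=(1+a)^{-1}$, which then has to be carried through the application of Donaldson openness. You instead perform the parabolic rescaling $\Psi_\tau(\zeta)=\delta^{-2}\phi(\tau\delta\zeta',\delta\zeta'')$, which makes $\Psi_\tau-|\zeta|^2=O(\delta)$ in $C^2$ on all of $B_1$ because $\phi-|z|^2$ vanishes to third order, and then interpolate with a fixed radial cut-off $\chi$. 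Since multiplying an $O(\delta)$ function (in $C^2$) by a fixed $\chi$ keeps $dd^c(\chi\,(\Psi_\tau-|\zeta|^2))=O(\delta)$, strict plurisubharmonicity of $|\zeta|^2+\chi(\Psi_\tau-|\zeta|^2)$ and the $\epsilon$-bound both hold for $\delta$ small, and the glued data agrees with $\Psi_\tau$ exactly (no constant $c$) where $\chi\equiv 1$. This is a legitimate and slightly cleaner substitute for Lemma~\ref{lem:regmax} and Proposition~\ref{prop:approxharmonic}; the trade-off is that it leans on the second-order osculation to make the cut-off error uniformly small, whereas the regularized maximum is the more robust general-purpose gluing tool. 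The remaining steps (the subtraction $\Phi=\Theta-\phi(\tau u,z'')$ recovering $\Omega=\pi_X^*\omega+dd^c\Phi$, the identification of the leaves through $\{u=0\}$ with the discs $\mathcal D_{p'}$, and the shrinking to a union of complete leaves) coincide with the paper's Proposition~\ref{prop:translation}.
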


As discussed previously, the idea is to reduce the problem to finding regular solutions to  the HMAE  over a neighbourhood of $\mathcal{D}_p$ which looks like a ball times the disc, i.e.\ the product case, but with a twisted boundary condition.     To set this up, let $z_1,\ldots,z_n$ be coordinates centered at $p$ chosen so that $Y$ is given locally by $z_1=\cdots=z_r=0$ (these coordinates will later be chosen with additional properties).      For simplicity we shall write $z=(z',z'')$ where $z'=(z_1,\ldots,z_r)$ and $z''=(z_{r+1},\ldots,z_n)$. Let $f:B_{\delta}\to X$ denote the corresponding holomorphic embedding of the ball of radius $\delta,$ where $\delta$ is chosen small enough so that $Y$ is given by $z_1=\cdots=z_r=0$ on the chart $f^{-1}:U\to B_{\delta}$.  Let also $\omega_f:=f^*\omega$.  

Consider the map $\Gamma\colon B_{\delta}\times D^{\times} \to U\times D^{\times}$ given by
\[ \Gamma(z,w)  = (f(wz',z''),w).\]


Considering $U\times D^{\times}$ embedded in $\mathcal N_Y$ by $\pi^{-1}$, we see from the definition of the blowup that this map extends to a biholomorphism from $B_{\delta}\times D$ to an open set $V\subset \mathcal N_Y$ that contains $\mathcal{D}_p$.   Henceforth we shall let
\begin{equation}
  \label{eq:definitionV'}
\Gamma \colon B_{\delta}\times D \to V
\end{equation}
denote this extended biholomorphism. Thus $\Gamma^{-1}$ is a holomorphic chart on $\mathcal N_Y$ containing $\mathcal D_p$.

Recall the holomorphic action $\rho$ of $S^1$ on $B_{\delta}\times D:$
\[ \rho(e^{i\theta}) \cdot (z',z'',\tau):= (e^{i\theta} z',z'',e^{-i\theta}\tau) = (e^{i\theta} z_1,\ldots,e^{i\theta} z_r,z_{r+1},\ldots,z_n,e^{-i\theta}\tau).\]
Under the biholomorphism $\Gamma$ the action $\rho$ is the same as the $S^1$ action on $\mathcal{N_Y}$ considered above restricted to $V=\Gamma(B_{\delta}\times D)$. We shall therefore refer to this data as a $\rho$-chart. By abuse of notation we will let the holomorphic action of $S^1$ on $B_1$ given by $$e^{i\theta}\cdot(z',z''):=(e^{i\theta}z',z'')$$ also be denoted by $\rho$.

\begin{proposition}\label{prop:translation}
There exists a local regular solution to  the HMAE  for the subset $\{p\}$ if and only if there exists a $\rho$-chart $f^{-1}:U \to B_{\delta}$ centered at $p$ together with a smooth real $\rho$-invariant function $\Phi'$ on $B_{\delta}\times D$ such that $\Omega':=dd^c\Phi'$ is a regular solution to the HMAE with boundary condition $\omega_{\tau}:=\rho(\tau)^*\omega_f$. 
\end{proposition}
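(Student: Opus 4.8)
The plan is to exploit the biholomorphism $\Gamma\colon B_\delta\times D\to V$ from \eqref{eq:definitionV'} to transport the HMAE back and forth between the neighbourhood of $\mathcal D_p$ inside $\mathcal N_Y$ and the product $B_\delta\times D$, checking that all the structures in Definition \ref{def:localregularsolution} correspond correctly under this dictionary. The key point is that $\Gamma$ intertwines the $S^1$-action on $\mathcal N_Y$ with the action $\rho$ on $B_\delta\times D$, so $S^1$-invariance on one side is exactly $\rho$-invariance on the other.

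First I would treat the forward direction. Suppose $(V,\Phi)$ is a local regular solution for $\{p\}$, with $V$ chosen (after shrinking) to lie inside the image of $\Gamma$; set $\Phi_0:=\Gamma^*\Phi$, a smooth $\rho$-invariant function on $\Gamma^{-1}(V)\subseteq B_\delta\times D$. Since $\Gamma$ is a biholomorphism, $\Gamma^*(\pi_X^*\omega+dd^c\Phi)=\Gamma^*\pi_X^*\omega+dd^c\Phi_0$, and because $\pi_X\circ\Gamma(z,w)=f(wz',z'')$ we have $\Gamma^*\pi_X^*\omega=dd^c\bigl(g(z,w)\bigr)$ for a local potential, but more efficiently I would argue directly: the pulled-back form is a closed real $(1,1)$-form, it is K\"ahler on each slice $\{w\}\times B_\delta$ for $w\in D$ (property (1) is biholomorphism-invariant and preserved by the fibrewise structure since $\Gamma$ respects the projection to $D$), it satisfies the Monge-Amp\`ere equation (property (3), again biholomorphism-invariant), and on the boundary $|w|=1$ one computes that the pullback of $\omega_{|V_w}$ is exactly $\rho(w)^*\omega_f$ — this is a direct unwinding of the formula $\Gamma(z,w)=(f(wz',z''),w)$ restricted to $|w|=1$, where multiplication by $w$ on $z'$ is precisely the action $\rho(w)$. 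It remains to write the pulled-back solution in the form $dd^c\Phi'$ on all of $B_\delta\times D$ (possibly after shrinking $\delta$): since $B_\delta\times D$ is Stein/contractible, the closed $(1,1)$-form $\Gamma^*\Omega$ has a global potential $\Phi'$, which can be averaged over $S^1$ to make it $\rho$-invariant, and adjusted by a pluriharmonic function so that $\Phi'=0$ is replaced by the correct boundary normalization $dd^c\Phi'_{|w\in S^1}=\rho(w)^*\omega_f$. The completeness condition (4) guarantees $\Gamma^{-1}(V)$ contains $\{z'=0\}\times D=\Gamma^{-1}(\mathcal D_p)$ and that, after shrinking, we may take $\Gamma^{-1}(V)=B_\delta\times D$ on the nose.

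For the reverse direction I would run the same argument backwards: given $(f^{-1},\Phi')$, push forward via $\Gamma$ to get $\Omega:=(\Gamma^{-1})^*(dd^c\Phi')$ on $V$, which is closed, $(1,1)$, $S^1$-invariant, fibrewise K\"ahler, satisfies $\Omega^{n+1}=0$, and restricts on $V_\tau$ ($\tau\in S^1$) to $\omega_{|V_\tau}$ by the same boundary computation. One then checks $\Omega$ is cohomologous to $\pi_X^*\omega$ (their difference restricts to zero on the boundary fibres and is $dd^c$-exact on the Stein chart), producing the required $\Phi$ with $\Omega=\pi_X^*\omega+dd^c\Phi$ and $\Phi_{|V_\tau}=0$; $\rho$-invariance of $\Phi'$ gives $S^1$-invariance of $\Phi$. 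Finally one verifies the foliation condition (4): $\rho$-invariance forces the leaf through each fixed point $(0,z'',0)$ to be the constant disc $\{(0,z'')\}\times D$ by Lemma \ref{lemma:invariantleaves}, so in particular $\mathcal D_p$ corresponds to a complete leaf, and shrinking $V$ to a union of complete leaves through $N_Y\subset E$ (exactly as in the discussion following Theorem \ref{defconethm}) arranges the rest.

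The main obstacle I anticipate is bookkeeping around the boundary normalization and the passage from "regular solution $\Omega$ cohomologous to $\pi_X^*\omega$" to "$\Omega=dd^c\Phi'$ as an honest potential on the product" — i.e. matching the normalization "$\Phi$ vanishes on $V_\tau$ for $\tau\in S^1$" on the $\mathcal N_Y$ side with "$dd^c\Phi'=\rho(\tau)^*\omega_f$ on the boundary" on the product side, since the local potential for $\pi_X^*\omega$ in the $\Gamma$-chart is itself a nontrivial (non-invariant) function that must be absorbed correctly. Everything else is a routine but careful transport of structure along the biholomorphism $\Gamma$, using that $\Gamma$ respects $\pi_D$ and intertwines the two circle actions.
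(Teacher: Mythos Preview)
Your proposal is correct and follows essentially the same route as the paper: transport everything through the biholomorphism $\Gamma$, use that $\Gamma$ intertwines the two $S^1$-actions, invoke Lemma~\ref{lemma:invariantleaves} for the constant discs, and shrink to a neighbourhood of complete leaves. The only real difference is one of explicitness. Where you argue abstractly (``a global potential exists on the contractible product, average it, then adjust by a pluriharmonic function''), the paper simply writes down the formulas: in the forward direction $\Phi'(z,\tau):=\phi(\rho(\tau)z)+\Phi(\Gamma(z,\tau))$ with $dd^c\phi=\omega_f$, and in the reverse direction $\Phi:=\Phi'\circ\Gamma^{-1}-\phi\circ f^{-1}\circ\pi_X$ with $\phi:=\Phi'|_{B_\delta\times\{1\}}$. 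These formulas make the ``main obstacle'' you anticipate disappear: the non-invariant local potential $\phi\circ\rho(\tau)$ for $\Gamma^*\pi_X^*\omega$ is exactly the correction term, and $\rho$-invariance of $\Phi'$ then gives $\Phi|_{V_\tau}=0$ for $\tau\in S^1$ on the nose, with no pluriharmonic adjustment needed. So your plan works, but the explicit formulas are both shorter and remove the bookkeeping you flagged.
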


\begin{proof}
If $(V,\Phi)$ is a local regular solution to  the HMAE  for the subset $\{p\}$ then pick a $\rho$-chart $f^{-1}:U \to B_{\delta}$ centered at $p$ such that $\Gamma(B_{\delta}\times D)\subseteq V$. Let $\Phi'(z,\tau):=\phi\circ \rho(\tau)(z)+\Phi\circ \Gamma(z,\tau),$ where $dd^c\phi=\omega_f$. Since the usual $S^1$-action on $\mathcal{N_Y}$ restricts to $\rho$ on the coordinate chart $B_{\delta}\times D,$ $\Phi'$ is $\rho$-invariant, and since $\Gamma$ is a biholomorphism $\Omega'$ is a regular solution to the HMAE on $B_{\delta}\times D$. One easily checks that it has boundary values $\omega_{\tau}:=\rho(\tau)^*\omega_f$.

For the other direction, let us assume that we have a $\rho$-chart and an $\rho$-invariant function $\Phi'$ on $B_{\delta}\times D$ such that $\Omega':=dd^c\Phi'$ is a regular solution to the HMAE with boundary condition $\omega_{\tau}:=\rho(\tau)^*\omega_f$. Let $\phi:=\Phi'_{|B_{\delta}\times \{1\}}$. As the boundary data is invariant, the same is true for the associated Monge-Amp\`ere foliation of $B_{\delta}\times D$.  In particular $\{0\}\times D$ must be a leaf of this foliation, so by continuity there exists a neighbourhood $V'$ of $\{0\}\times D$ consisting of complete leaves. Since the the foliation is $\rho$-invariant we can take the union of all $\rho(e^{i\theta})V'$ which is then a $\rho$-invariant neighbourhood $V'$ of $\{0\}\times D$ consisting of complete leaves, and we now call this neighbourhood $V'$. Because $\Gamma$ is a biholomorphism, $\Gamma^* \omega = \rho(\tau)^*\omega_f$ on $B_{\delta}\times S^1$ and $\rho$ corresponds to the restriction of the usual $S^1$-action on $\mathcal{N_Y}$ to $V=\Gamma(V')$ we get that $(V,\Phi)$ where $\Phi:=\Phi'\circ\Gamma^{-1}-\phi\circ f^{-1}\circ \pi_X$ is a local regular solution to  the HMAE  for the subset $\{p\}$. 
\end{proof}

\begin{lemma}\label{lemma:coordinates}
  There exists a $\rho$-chart centered at $p$ such that $\omega_f=dd^c \phi$ with
$$\phi(z) =|z|^2 + O(|z|^3).$$
\end{lemma}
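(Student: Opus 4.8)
The plan is to produce the chart by a careful choice of holomorphic coordinates that simultaneously respects the splitting $z=(z',z'')$ adapted to $Y$ and normalizes the Taylor expansion of a K\"ahler potential at $p$. First I would fix any $\rho$-chart $f^{-1}\colon U\to B_\delta$ centered at $p$ as in the setup above, so that $Y$ is cut out by $z_1=\dots=z_r=0$, and pick a smooth strictly plurisubharmonic $\phi$ with $dd^c\phi=\omega_f$ near $0$, normalized so that $\phi(0)=0$ and $d\phi|_0=0$ (the latter is possible because $\phi$ is determined only up to a pluriharmonic summand, and we may subtract $2\operatorname{Re}$ of the linear holomorphic part of its Taylor series). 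The second-order term of $\phi$ at $0$ is then $\phi_2(z)=\operatorname{Re}\sum a_{ij}z_iz_j + \sum h_{i\bar j}z_i\bar z_j$ with $(h_{i\bar j})$ Hermitian positive definite (strict plurisubharmonicity), and again the holomorphic-quadratic part $\operatorname{Re}\sum a_{ij}z_iz_j$ is pluriharmonic and can be absorbed into $\phi$. So after this reduction $\phi_2(z)=\sum h_{i\bar j}z_i\bar z_j$.

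Next I would diagonalize the Hermitian form. The key point is that the coordinate change must preserve the condition that $Y=\{z'=0\}$, i.e. it must be block upper/lower triangular with respect to the decomposition $\mathbb C^r\oplus\mathbb C^{n-r}$. Concretely, I would first choose a complex linear change of the $z''$ variables so that $h_{i\bar j}$ restricted to the $z''$-block becomes the identity; then, since the space spanned by $\partial_{z_{r+1}},\dots,\partial_{z_n}$ already has a fixed orthonormal basis, I would subtract from each $z_k$ ($k\le r$) an appropriate linear combination of the $z''$ to kill the mixed terms $h_{k\bar l}$ with $k\le r<l$ — this is a triangular substitution $z_k\mapsto z_k + \sum_{l>r}c_{kl}z_l$, which clearly preserves $\{z'=0\}$; finally I would apply a linear change of the $z'$ variables to diagonalize the remaining $r\times r$ Hermitian block to the identity. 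After these changes $\phi_2(z)=|z|^2$. One must check all these substitutions are holomorphic, invertible near $0$, and respect $Y=\{z'=0\}$, which is immediate from their triangular form, and that the $\rho$-action is unaffected (it only involves the grading $z'$ vs $z''$, which is preserved).

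With $\phi(0)=0$, $d\phi|_0=0$, and $\phi_2(z)=|z|^2$, Taylor's theorem gives $\phi(z)=|z|^2+O(|z|^3)$ on a possibly smaller ball, which is exactly the assertion; shrinking $\delta$ if necessary we get the claimed $\rho$-chart. I do not expect any genuine obstacle here — the statement is essentially the classical fact that a strictly plurisubharmonic function can be put into normal form to second order, the only mild subtlety being to carry out the linear algebra by triangular moves so as not to disturb the locus $\{z'=0\}$ defining $Y$, and to remember that pluriharmonic (in particular holomorphic-quadratic) corrections to $\phi$ are free. The main "work", such as it is, is simply bookkeeping with the block decomposition; there is no analysis involved.
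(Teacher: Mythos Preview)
Your overall strategy matches the paper's: strip off the pluriharmonic constant, linear, and holomorphic--quadratic parts of the Taylor expansion, then make a linear change of coordinates to reduce the Hermitian second-order form $(h_{i\bar j})$ to the identity while preserving $Y=\{z'=0\}$. There is, however, a genuine slip in your triangular step. The substitution you write, $z_k\mapsto z_k+\sum_{l>r}c_{kl}z_l$ for $k\le r$, does \emph{not} preserve the locus $\{z'=0\}$: on the set where the old $z'$ vanish the new $k$-th coordinate equals $\sum_{l>r}c_{kl}z_l$, which is generically nonzero, so in the new coordinates $Y$ is no longer cut out by the vanishing of the first $r$ coordinates and the chart fails to be a $\rho$-chart. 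Your parenthetical ``which clearly preserves $\{z'=0\}$'' is exactly the point that fails.

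The fix is simply to shear in the other direction: keep the $z'$ coordinates fixed and modify each $z_l$ with $l>r$ by a linear combination of the $z'$, i.e.\ $z_l\mapsto z_l+\sum_{k\le r}c_{lk}z_k$. This block lower-triangular change manifestly preserves $\{z'=0\}$, and it still kills the mixed terms (Gram--Schmidt with respect to $(h_{i\bar j})$, projecting the $z'$-directions orthogonally off the span of the $z''$-directions, corresponds on the coordinate side to adjusting $z''$ by $z'$, not the reverse). The paper does precisely this, phrasing it as choosing an orthonormal basis for the Hermitian form that first extends one for the subspace $\{u'=0\}=T_pY$; the resulting change-of-basis matrix is block lower triangular. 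With this single correction your argument goes through and is essentially the paper's proof.
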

\begin{proof}


For completeness we include the standard argument.   Let $u=(u',u'')$ with $u'=(u_1,\ldots,u_r)$ and $u''=(u_{r+1},\ldots,u_n)$ be coordinates centered at $p$ defining some $\rho$-chart, and thus $Y$ is given locally by $u'=0$. Also choose a potential $\psi$ for $\omega_f$. The fact that $\psi$ is real valued implies that the Taylor expansion around $0$ takes the form
$$\psi(u) = \alpha + Re(\sum_i \alpha_i u_i + \beta_i u_i^2) + \sum_{ij} \gamma_{ij} u_i\bar{u}_j + o(|u|^3)$$
for some real coefficients $\alpha,\alpha_i,\beta_i$ with 
$$\gamma_{ij} = \frac{\partial^2 \psi}{\partial u_i \partial \overline{u}_j}|_{u=0}.$$
Clearly $h(u):= \alpha + Re(\sum_i \alpha_i u_i + \beta_i u_i^2)$ is pluriharmonic since it is the real part of a holomorphic function. 
 
As $\psi$ is strictly plurisubharmonic, $\Gamma:=(\gamma_{ij})$ is a positive definite hermitian.  Thus by choosing an orthonormal basis with respect to the hermitian form given by $\Gamma$ one gets a linear change of coordinates $P$ so that $P^* \Gamma P=I$. Moreover this can be achieved whilst preserving a given subspace (say by first picking such an orthonormal basis for this subspace).  Thus there exits a matrix with block form
$$ P := \left(
  \begin{array}{ll}
 * & 0 \\
 * & *
  \end{array}
\right)$$
so that $P^*\Gamma P=I$. Letting $z:=Pu$ we get a new $\rho$-chart $f'$ with $Y = \{z'=0\}$ and
$$\psi(P^{-1}z) = h(P^{-1}z) + |z|^2 + O(|z|^3).$$
Now $\omega_{f'}=dd^c\phi$ where $\phi(z):=\psi(P^{-1}z)-h(P^{-1}z)$ which clearly is of the right form.
\end{proof} 
 

\begin{proof}[Proof of Proposition \ref{prop:hmaelocal}]
The existence of a local regular solution for the subset $\{p\}$ follows immediately on combining Proposition \ref{prop:translation}, Lemma \ref{lemma:coordinates} with Theorem \ref{thm:localexistence}.
\end{proof}

\subsection{Patching}\label{sec:patch}

We now show how the local solutions provided in Section \ref{sec:local} patch together. This will follow from a local uniqueness property for these regular solutions.

\begin{proposition} \label{prop:localuniq}
If $(V^{\alpha},\Phi^{\alpha})$ is a local regular solution to the HMAE for a subset $A\subseteq Y$ and $(V^{\beta},\Phi^{\beta})$ is a local regular solution to the HMAE for a subset $B\subseteq Y$ then in fact $\Phi^{\alpha}=\Phi^{\beta}$ in some neighbourhood of the closure of $\pi^{-1}( A\cap B\times D \setminus Y\times \{0\})$.
\end{proposition}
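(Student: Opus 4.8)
The plan is to play the two Monge-Amp\`ere foliations off against one another via the maximum principle. The key elementary observation is that \emph{any} regular solution $\Omega$ to the HMAE is automatically a semipositive form: at each point $\Omega$ is a Hermitian form on the $(n+1)$-dimensional holomorphic tangent space that is positive definite on the $n$-dimensional subspace tangent to the fibre $V_\tau$, so it has at most one nonpositive eigenvalue (two of them would span a complex $2$-plane meeting the fibre direction); but $\Omega^{n+1}=0$ forces the determinant of this Hermitian form, hence one of its eigenvalues, to vanish. Therefore $\Omega\ge 0$, and in particular $\Omega|_C\ge 0$ for every complex curve $C$ contained in the domain of $\Omega$.

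Fix $p\in A\cap B$ and set $u:=\Phi^\alpha-\Phi^\beta$ on $V^\alpha\cap V^\beta$; then $dd^c u=\Omega^\alpha-\Omega^\beta$, and by condition (2) of Definition~\ref{def:localregularsolution} we have $u=0$ on $V_\tau$ for every $\tau\in S^1$. As remarked after Definition~\ref{def:localregularsolution}, $\mathcal D_p$ is a leaf of the Monge-Amp\`ere foliation of $\Omega^\alpha$ and also of that of $\Omega^\beta$. Since $D$ is the closed disc, $\mathcal D_p$ is compact and lies in the open set $V^\alpha\cap V^\beta$; working in a product chart $\Gamma\colon B_\delta\times D\to\mathcal N_Y$ around $\mathcal D_p$ as in \eqref{eq:definitionV'}, with $\delta$ shrunk so that $\Gamma(B_\delta\times D)\subseteq V^\alpha\cap V^\beta$, the leaf $\Gamma^{-1}(\mathcal D_p)=\{0\}\times D$ is a global section of $\pi_D$ (cf. Lemma~\ref{lemma:invariantleaves}), and since $D$ is simply connected the nearby leaves of the $\beta$-foliation are again global sections, i.e.\ complete leaves, and they sweep out a neighbourhood of $\mathcal D_p$ lying inside $V^\alpha\cap V^\beta$. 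Call this neighbourhood $W^\beta$, and define $W^\alpha$ analogously with $\alpha$ in place of $\beta$.

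Let $L$ be one of the complete $\beta$-leaves making up $W^\beta$. On $L$ we have $\Omega^\beta|_L=0$, hence $dd^c(u|_L)=(dd^c u)|_L=\Omega^\alpha|_L\ge 0$ by the semipositivity above, so $u|_L$ is subharmonic on $L\cong D$; as $u$ vanishes on the boundary $\partial L$ (which lies over $S^1$), the maximum principle gives $u\le 0$ on $L$. Letting $L$ vary over the leaves of $W^\beta$ we obtain $u\le 0$ on all of $W^\beta$, and interchanging the roles of $\alpha$ and $\beta$ gives $u\ge 0$ on $W^\alpha$. Therefore $u\equiv 0$ on the open neighbourhood $W^\alpha\cap W^\beta$ of $\mathcal D_p$, that is $\Phi^\alpha=\Phi^\beta$ near $\mathcal D_p$. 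Since $p\in A\cap B$ was arbitrary, the union of these neighbourhoods over $p$ (together with the identical argument at any limit points of $A\cap B$, where both solutions are still defined) is a neighbourhood of the proper transform of $(A\cap B)\times D$ on which $\Phi^\alpha=\Phi^\beta$.

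The step I expect to demand the most care is the passage, in the second paragraph, from ``$\mathcal D_p$ is a leaf of each foliation'' to ``a full neighbourhood of $\mathcal D_p$ inside $V^\alpha\cap V^\beta$ is swept out by \emph{complete} leaves of each foliation'': an individual complete leaf covers all of $D$ and could a priori wander out of the common domain $V^\alpha\cap V^\beta$ before closing up, yet the maximum principle is only useful when applied to an entire disc leaf whose boundary lies over $S^1$. The compactness of $\mathcal D_p$ and the explicit product chart around it make this routine, but it is the one genuinely non-formal point; everything else is the semipositivity observation together with two applications of the maximum principle.
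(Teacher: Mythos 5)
Your proof is correct and follows essentially the same route as the paper: a two-sided maximum principle applied to the difference of potentials along complete leaves of each Monge--Amp\`ere foliation in a neighbourhood of $\mathcal D_p$, with the neighbourhood of complete leaves obtained by continuity from the fact that $\mathcal D_p$ is itself a leaf of both foliations. The only (harmless) cosmetic differences are that you work directly with $u=\Phi^\alpha-\Phi^\beta$ instead of adding a local potential for $\omega_f$ in the $\rho$-chart, and that you spell out the semipositivity of a regular solution, which the paper uses implicitly when asserting subharmonicity along leaves.
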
 

\begin{proof}
Pick a point $p\in A\cap B$ and a $\rho$-chart $f^{-1}:U\to B_{\delta}$ centered at $p$ such that $\Gamma(B_{\delta}\times D)\subseteq (V^{\alpha}\cap V^{\beta})$.   Let $\phi$ be a function such that $dd^c\phi=\omega_f$ and let $\Phi':=\phi\circ \rho(\tau)(z)+\Phi^{\alpha}\circ \Gamma(z,\tau)$ and $\Phi'':=\phi\circ \rho(\tau)(z)+\Phi^{\beta}\circ \Gamma(z,\tau)$.   Then $\Phi'$ and $\Phi''$ are two $\rho$-invariant regular solutions to the HMAE on $B_{\delta}\times D$ with the same boundary values. Let $\mathcal{L}$ be a complete leaf of the foliation associated to $\Omega',$ clearly we have that $\Phi'$ is harmonic along $\mathcal{L}$.   Since $\Phi'=\Phi''$ on the boundary of $\mathcal{L}$ while $\Phi''$ is subharmonic on $\mathcal{L}$ it follows from the maximum principle that $\Phi'\geq \Phi''$ on $\mathcal{L}$.  We know that the constant disc $\{0\}\times D=\Gamma^{-1}(\mathcal{D}_p)$ is part of the Monge-Amp\`ere foliation of $\Omega':=dd^c\Phi'$ ( as well as that of $\Omega'':=dd^c\Phi''$) and thus a neighbourhood of $\{0\}\times D$ is foliated by such complete leaves, so in this neighbourhood $\Phi'\geq \Phi''$.   But arguing the same way using complete leaves of the foliation associated to $\Omega''$ now gives us that in fact $\Phi'=\Phi''$ in a neighbourhood of $\{0\}\times D$. Pulling back by $\Gamma^{-1}$ yields the identity of $\Phi^{\alpha}$ and $\Phi^{\beta}$ near $\mathcal{D}_p,$ and we are done.
\end{proof} 

\begin{remark}
For spaces $\pi_D: V\to D$ where the fibers are compact manifolds without boundary there exist general uniqueness results for the Homogeneous Monge-Amp\`ere equation even for weak solutions. However the local uniqueness result stated here is quite different in that it does not depend on the behaviour of the solution near the boundary of $V$.
\end{remark}

If $(V,\Phi)$ is a local regular solution to the HMAE and $C\subseteq V_1$ then we let $V_C$ be defined as the union of all leaves of the associated Monge-Amp\`ere foliation which connect to $C\times \{1\}\subseteq \mathcal{N_Y}$.  Note that this is an $S^1$-invariant subset of $\mathcal{N_Y}$.  Given $\epsilon>0$ we will let $C^{\epsilon}$ denote the set of points in $X$ within an $\epsilon$ distance of $C$ (measured using the K\"ahler metric).

\begin{lemma} \label{lem:locpatch}
If $A,B$ are two compact subsets of $Y$ and $(V^{\alpha},\Phi^{\alpha})$ is a local regular solution to the HMAE for $A$ while $(V^{\beta},\Phi^{\beta})$ is a local regular solution to the HMAE for $B,$ then there exists a local regular solution $(V,\Phi)$ to the HMAE for $A\cup B$.   Moreover if $C$ is a compact subset of $X$ such that $C\cap Y=A$ then we can choose $(V,\Phi)$ so that for some $\epsilon>0$ we have that $V^{\alpha}_{C^{\epsilon}}\subseteq V$ and $\Phi=\Phi^{\alpha}$ on $V^{\alpha}_{C^{\epsilon}}$.
\end{lemma}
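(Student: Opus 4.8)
I would deduce this from the local uniqueness of Proposition \ref{prop:localuniq} by shrinking the two domains until they overlap only where the two solutions already agree, and then gluing. Write $\mathcal A,\mathcal B,\mathcal C$ for the proper transforms of $A\times D$, $B\times D$, $(A\cap B)\times D$; each of these is compact, $\mathcal C=\mathcal A\cap\mathcal B$, and each is a union of the curves $\mathcal D_p$. By Proposition \ref{prop:localuniq} there is an open set on which $\Phi^\alpha=\Phi^\beta$ and which contains $\mathcal C$; intersecting it with $V^\alpha\cap V^\beta$ and then forming the union of its $S^1$-translates (harmless since both $\Phi^\alpha$ and $\Phi^\beta$ are $S^1$-invariant) produces an $S^1$-invariant open $W\subseteq V^\alpha\cap V^\beta$ with $\mathcal C\subseteq W$ on which $\Phi^\alpha=\Phi^\beta$, so that $\Omega^\alpha=\Omega^\beta$ and the two Monge-Amp\`ere foliations agree there. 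The plan is then to find $S^1$-invariant open sets $\tilde V^\alpha\supseteq\mathcal A$ and $\tilde V^\beta\supseteq\mathcal B$, each a union of complete leaves of the corresponding foliation, with $\tilde V^\alpha\subseteq V^\alpha$, $\tilde V^\beta\subseteq V^\beta$ and $\tilde V^\alpha\cap\tilde V^\beta\subseteq W$; then to set $V:=\tilde V^\alpha\cup\tilde V^\beta$ and let $\Phi$ be $\Phi^\alpha$ on $\tilde V^\alpha$ and $\Phi^\beta$ on $\tilde V^\beta$. Well-definedness follows from $\tilde V^\alpha\cap\tilde V^\beta\subseteq W$; smoothness and the K\"ahler and HMAE conditions are local and hence inherited; condition (4) of Definition \ref{def:localregularsolution} is inherited leaf by leaf; and $\mathcal A\cup\mathcal B$ is exactly the proper transform of $(A\cup B)\times D$. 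So $(V,\Phi)$ is a local regular solution for $A\cup B$.

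Everything therefore reduces to the shrinking step, and the tool I would use is the retraction along leaves onto the central fibre. Since all leaves of a local regular solution are complete, each leaf is a graph over the closed disc $\overline D$, so flowing along the foliation identifies $V^\alpha$ diffeomorphically with $V_0^\alpha\times\overline D$, where $V_0^\alpha:=V^\alpha\cap\pi_D^{-1}(0)$ is a smooth manifold by condition (1); the leaf retraction $L_0^\alpha\colon V^\alpha\to V_0^\alpha$ is then a proper submersion, and it is $S^1$-equivariant because the $S^1$-action fixes the central fibre $\pi_D^{-1}(0)$ (unlike the fibres over $S^1$, which it permutes). Using condition (4) and $\mathcal A\subseteq\mathcal Y$ one checks that $(L_0^\alpha)^{-1}(\iota(A))=\mathcal A$, and likewise $(L_0^\beta)^{-1}(\iota(B))=\mathcal B$. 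Consequently, letting $P$ run over $S^1$-invariant relatively compact open neighbourhoods of $\iota(A)$ in $V_0^\alpha$ and $Q$ over such neighbourhoods of $\iota(B)$ in $V_0^\beta$, the compact sets $(L_0^\alpha)^{-1}(\overline P)\cap(L_0^\beta)^{-1}(\overline Q)$ form a decreasing family with intersection $\mathcal A\cap\mathcal B=\mathcal C\subseteq W$, hence are eventually contained in the open set $W$; taking $\tilde V^\alpha:=(L_0^\alpha)^{-1}(P)$ and $\tilde V^\beta:=(L_0^\beta)^{-1}(Q)$ for such a pair completes the shrinking.

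For the last assertion I would additionally require $P$ to contain $L_0^\alpha(V^\alpha_{C^\epsilon})$, which forces $\tilde V^\alpha\supseteq V^\alpha_{C^\epsilon}$ and hence $\Phi=\Phi^\alpha$ on $V^\alpha_{C^\epsilon}$. This can be arranged for $\epsilon$ small: since $C\cap Y=A$, for $q\in B$ one has $\mathcal D_q\subseteq V^\alpha_{C^\epsilon}$ only if $q\in C^\epsilon\cap B$, and $\overline{C^\epsilon\cap B}$ decreases, as $\epsilon\downarrow 0$, into $C\cap B=C\cap Y\cap B=A\cap B$, whose curves $\mathcal D_q$ all lie in $\mathcal C\subseteq W$; so for $\epsilon$ small $V^\alpha_{C^\epsilon}$ is an $S^1$-invariant union of complete $\Omega^\alpha$-leaves inside $V^\alpha$, containing $\mathcal A$, and meeting $\mathcal B$ only inside $W$, which is precisely what allows the compactness argument above to go through while ensuring $\tilde V^\alpha\supseteq V^\alpha_{C^\epsilon}$ and $\tilde V^\alpha\cap\tilde V^\beta\subseteq W$.

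The step I expect to require the most care is this coordination of the two shrinkings: keeping them $S^1$-invariant, keeping them unions of complete leaves, making their overlap land inside $W$, and simultaneously keeping the $\alpha$-side above $V^\alpha_{C^\epsilon}$. The reason it works is that after retracting to the central fibre the $S^1$-action becomes an honest action rather than a permutation of $\pi_D$-fibres, and the retraction is proper because the leaves are copies of $\overline D$; the separation of the relevant compact sets in $V_0^\alpha$ and $V_0^\beta$ is then just normality of these manifolds.
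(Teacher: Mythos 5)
Your proposal is correct and takes essentially the same route as the paper: both use Proposition \ref{prop:localuniq} to produce an $S^1$-invariant, leaf-saturated region around the proper transform of $(A\cap B)\times D$ on which $\Phi^{\alpha}=\Phi^{\beta}$, and then glue after arranging that the two domains overlap only inside that region. The paper implements the shrinking as the explicit three-piece decomposition $V^{\alpha}_{C^{\epsilon}}\cup U\cup(V^{\beta}\setminus V^{\alpha}_{C^{2\epsilon}})$ rather than via your leaf-retraction and compactness argument, but the underlying mechanism is identical.
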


\begin{proof}
By Proposition \ref{prop:localuniq}, for any point $p\in A\cap B$ we can pick an $S^1$-invariant neighbourhood $U_p$ of $D_p$ in $V^{\alpha}\cap V^{\beta}$ where $\Phi^{\alpha}=\Phi^{\beta},$ consisting only of complete leaves of the associated foliation. Let $U$ denote the union of those neighbourhoods. Now we pick a compact subset $C\subseteq V_1^{\alpha}$ such that $C\cap Y=A$.   By property (4) of a local regular solution (see Definition \ref{def:localregularsolution}) we have that $V^{\alpha}_{C^{2\epsilon}}\cap \mathcal{Y}$ is equal to the closure of $\pi^{-1}(C^{2\epsilon}\cap Y\times D \setminus Y\times \{0\})$.   This implies that when $\epsilon$ is small enough  $$V^{\alpha}_{C^{\epsilon}}\cup U\cup (V_{\beta}\setminus V^{\alpha}_{C^{2\epsilon}})$$ is a neighbourhood of the closure of $\pi^{-1}(A\cup B)\times D\setminus Y\times \{0\})$.  We then define $\Phi$ to be equal to $\Phi^{\alpha}$ on $V^{\alpha}_{C^{\epsilon}},$ while letting it be $\Phi^{\beta}$ on $(V_{\beta}\setminus V^{\alpha}_{C^{2\epsilon}}),$ and then equal to either one on $U$. This now gives local regular solution $(V,\Phi)$ to the HMAE for $A\cup B$.       
\end{proof}

The following gives a proof of Theorem \ref{defconethm}.   As before the data consists of a K\"ahler manifold $(X,\omega)$ (not necessarily compact) together with a complex submanifold $Y\subset X$.

\begin{theorem}\label{thm:hmae:repeat}
There exists a local regular solution $(V,\Phi)$ to the HMAE for $Y$. In particular it means that we have an $S^1$-invariant neighbourhood $V$ of $\mathcal Y$ in $\mathcal N_Y$ and a smooth closed $S^1$-invariant real $(1,1)$-form $\Omega:=\pi_X^*\omega+dd^c\Phi$ on $V$ that gives a regular solution to the homogeneous Monge-Amp\`ere equation with boundary data induced by $\omega$, i.e.
\begin{enumerate}
\item $\Omega_{|V_{\tau}}$ is K\"ahler for all $\tau\in D,$ 
\item $\Omega_{|V_\tau}=\omega_{|V_\tau}$ for all $\tau\in S^1$, and
\item $\Omega^{n+1}=0$ on $V$.
\end{enumerate}
Moreover the germ around $\mathcal Y$ of any such $S^1$-invariant regular solution is unique.
\end{theorem}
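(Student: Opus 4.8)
The plan is to assemble the local solutions of Proposition \ref{prop:hmaelocal} into a solution for all of $Y$ using the patching results of Section \ref{sec:patch}, and then to read off uniqueness from the local uniqueness statement, Proposition \ref{prop:localuniq}. The first observation is that a local regular solution for a single point is already one for a small compact neighbourhood in $Y$: if $(V,\Phi)$ is as in Proposition \ref{prop:hmaelocal} then $V$ is an open neighbourhood of $\mathcal D_p$, hence contains the proper transform of $K\times D$ for any sufficiently small compact neighbourhood $K$ of $p$ in $Y$, and conditions (2)--(4) of Definition \ref{def:localregularsolution} make no reference to the underlying subset---in particular condition (4), once $V$ contains the proper transform of $K\times D$, automatically forces $\mathcal D_q$ to be a leaf for every $q\in K$ (the leaf through $\iota(q)\in\mathcal Y$ must be some $\mathcal D_{q'}$, and $\iota(q)\in\mathcal D_q$ gives $q'=q$). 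Thus every point of $Y$ has a compact neighbourhood admitting a local regular solution. Given a compact $A\subseteq Y$, cover it by finitely many such neighbourhoods and apply Lemma \ref{lem:locpatch} repeatedly to obtain a local regular solution for a compact set containing $A$; if $Y$ is compact this completes the existence part.

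For non-compact $Y$ I would fix an exhaustion $A_1\Subset A_2\Subset\cdots$ with $\bigcup_k A_k=Y$ and inductively construct local regular solutions $(W_k,\Psi_k)$ for compact sets $B_k\supseteq A_k$, at each stage patching $(W_k,\Psi_k)$ against a solution for a compact neighbourhood of $A_{k+1}$ by Lemma \ref{lem:locpatch}. The ``moreover'' clause of that lemma lets us insist that $\Psi_{k+1}=\Psi_k$ on a prescribed open neighbourhood of the proper transform of $B_k\times D$; since the set prescribed at stage $k$ only has to contain a preassigned compact set, we may arrange these neighbourhoods to increase with $k$, so that $(\Psi_k)$ stabilises on an increasing family of open sets whose union $V$ is an $S^1$-invariant neighbourhood of $\mathcal Y$ and the common value is a smooth $S^1$-invariant $\Phi$ with $\Omega:=\pi_X^*\omega+dd^c\Phi$ a regular solution. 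This $(V,\Phi)$ is a local regular solution for $Y$, and properties (1)--(3) of the statement are then immediate, being exactly the three conditions in the definition of a regular solution to the HMAE with boundary data induced by $\omega$ (property (2) being the boundary condition $\Omega|_{V_\tau}=\omega|_{V_\tau}$ for $\tau\in S^1$).

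For uniqueness, let $(V',\Phi')$ be any pair with $V'$ an $S^1$-invariant neighbourhood of $\mathcal Y$, $\Phi'$ smooth, real, $S^1$-invariant and vanishing on $V'_\tau$ for $\tau\in S^1$, and $\Omega':=\pi_X^*\omega+dd^c\Phi'$ a regular solution to the HMAE. I claim that, after shrinking $V'$, this is a local regular solution for $Y$ in the sense of Definition \ref{def:localregularsolution}. Indeed, $S^1$-invariance forces $\Omega'$ to vanish along each $\mathcal D_p$ (as in the discussion after Theorem \ref{defconethm}), so each $\mathcal D_p$ is a complete leaf of the associated Monge--Amp\`ere foliation; by continuity we may shrink $V'$ to an $S^1$-invariant neighbourhood of $\mathcal Y$ which is a union of complete leaves through $N_Y\subseteq E$, and then uniqueness of the leaf through a point forces every leaf meeting $\mathcal Y$ to equal the $\mathcal D_p$ through that point, giving condition (4). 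Now apply Proposition \ref{prop:localuniq} with $A=B=Y$ to $(V,\Phi)$ and $(V',\Phi')$: this yields $\Phi=\Phi'$ on a neighbourhood of the proper transform of $Y\times D$, that is, of $\mathcal Y$. Hence the germ is unique.

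The genuinely delicate inputs---local existence (Proposition \ref{prop:hmaelocal}, via Donaldson's Openness Theorem) and the foliation-based local uniqueness and patching of Section \ref{sec:patch}---are already established, so the remaining work is essentially organizational. The step I expect to require the most care is the convergence of the inductive patching in the non-compact case: one must choose, at each stage, the neighbourhood on which the new solution is frozen to the old one so that these neighbourhoods genuinely exhaust a full neighbourhood of $\mathcal Y$ rather than collapsing onto $\mathcal Y$ itself. The only other point not to be glossed over is the verification, in the uniqueness argument, that an a priori arbitrary $S^1$-invariant regular solution satisfies condition (4) of Definition \ref{def:localregularsolution} after shrinking.
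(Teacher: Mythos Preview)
Your proposal is correct and follows essentially the same route as the paper: use Proposition~\ref{prop:hmaelocal} to get solutions over small compact pieces of $Y$, patch them inductively via Lemma~\ref{lem:locpatch} (using its ``moreover'' clause to freeze the solution on an increasing family of sets), and deduce uniqueness from Proposition~\ref{prop:localuniq}. The paper organizes the patching with a single locally finite cover rather than splitting into compact and non-compact cases, but this is only cosmetic; your explicit discussion of why an arbitrary $S^1$-invariant regular solution can be shrunk to satisfy condition~(4) is a point the paper leaves implicit.
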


\begin{proof}
Using Proposition \ref{prop:hmaelocal} there is a collection $(V^{\alpha_i},\Phi^{\alpha_i}),$ ($i\in I$ with $I=\{1,...,N\}$ or $I=\mathbb{N}$) of local regular solutions to  the HMAE  for compact subsets $A_i\subseteq Y$ such that $\{A_i^{\circ}\}_{i\in I}$ is a locally finite cover of $Y$. Let $C_1:=A_1$. Then by Lemma \ref{lem:locpatch}, for some $\epsilon_1>0$ we can find a locally regular solution $(V^2,\Phi^2)$ to  the HMAE  for $(A_1^{\epsilon_1}\cap Y)\cup A_2$  such that $V^{\alpha_1}_{A_1^{\epsilon_1}}\subseteq V^2$ and so that $\Phi^2=\Phi^{\alpha_1}$ there. For $k\geq 2$ we let $C_k:=C_{k-1}^{\epsilon_k}\cup A_k$ and do the same thing, and we will get a sequence of local regular solutions $(V^k,\Phi^k)$ to  the HMAE  for $\cup_{i=1}^k A_i$. We note that the subsets $V^k_{C_k^{\epsilon_{k+1}}}$ are all $S^1$-invariant and increase with $k,$ and for $l>k,$ $\Phi^l=\Phi^k$ on $V^k_{C_k^{\epsilon_{k+1}}}$. Let $V:=\cup_{k=1}^{\infty}V^k_{C_k^{\epsilon_{k+1}}}$ and define $\Phi$ by letting it be $\Phi^k$ on $V^k_{C_k^{\epsilon_{k+1}}},$ then it is immediate that $(V,\Phi)$ has the properties described in the theorem. This proves the existence part, while the uniqueness of the germ follows directly from Proposition \ref{prop:localuniq}.   
\end{proof}

In fact we can say a bit more than just that the germ of any regular solution is unique, but for this we need to introduce the notion of complete regular solutions.

\begin{definition}
We will call a pair $(V,\Omega)$ a \emph{complete regular solution} to the HMAE (or in short a \emph{complete solution}) if it is a regular solution to the HMAE as in Theorem \ref{defconethm} (so in particular it is cohomologous to $\pi_X^*\omega$) such that $V$ is foliated by complete leaves of the Monge-Amp\`ere foliation, $V_0\subseteq N_Y$ and whenever $u\in V_0$ then $\tau u\in V_0$ for all $\tau \in D$.
\end{definition}

Given a regular solution $(V,\Omega)$ as in Theorem \ref{defconethm}, by shrinking $V$ we can always get a nontrivial complete solution. 

\begin{proposition} \label{prop:completeuniq}
If $(V,\Omega)$ and $(V',\Omega')$ are two complete solutions and $V_0\subseteq V'_0$ then $V\subseteq V'$ and $\Omega=\Omega'$ on $V$. 
\end{proposition}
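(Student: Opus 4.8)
The plan is to exploit the rigidity of the Monge-Amp\`ere foliation together with the local uniqueness result (Proposition \ref{prop:localuniq}) and the completeness hypothesis. Since both $(V,\Omega)$ and $(V',\Omega')$ are regular solutions to the HMAE as in Theorem \ref{defconethm}, each is of the form $\Omega = \pi_X^*\omega + dd^c\Phi$ and $\Omega'=\pi_X^*\omega+dd^c\Phi'$ for $S^1$-invariant potentials $\Phi,\Phi'$ vanishing on $V_\tau$, $\tau\in S^1$. By Proposition \ref{prop:localuniq} (or rather the germ uniqueness in Theorem \ref{thm:hmae:repeat}) we already know $\Phi=\Phi'$ on some neighbourhood of $\mathcal Y$, and hence the two Monge-Amp\`ere foliations agree there. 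So the content of the proposition is that this agreement propagates along complete leaves all the way out to $V_0$, and that $V_0\subseteq V_0'$ forces $V\subseteq V'$.

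The key step is to reconstruct each complete solution from its central fibre $V_0$ by flowing along the leaves. First I would use completeness: every leaf of the foliation on $V$ is complete, so it meets $V_0$, and by the last clause of the definition of complete solution $V_0$ is a ``Reinhardt-type'' $S^1$-invariant open subset of $N_Y$ (it contains the whole disc $\tau u$ whenever it contains $u$). For a point $u\in V_0\subseteq N_Y$, the leaf $\mathcal L_u$ through $u$ is a holomorphic disc in $\mathcal N_Y$ surjecting onto $D$ with $\mathcal L_u\cap \pi_D^{-1}(0)=\{u\}$ — this is exactly the picture of a graph over $D$ in a $\rho$-chart, as in Lemma \ref{lemma:invariantleaves}. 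The point is that $\mathcal L_u$ is determined purely by $u$ and the foliation: it is the unique complete leaf through $u$. Now I would take a point $u\in V_0\subseteq V_0'$ and argue that $\mathcal L_u^{\Omega}=\mathcal L_u^{\Omega'}$. Near $\mathcal Y$ the foliations coincide, so this holds for $u$ near $\iota(Y)$; for general $u\in V_0$ I would use $S^1$-invariance and a connectedness/continuity argument: the set of $u\in V_0$ for which the two leaves agree (as germs, hence as maximal leaves inside $V\cap V'$) is open, closed in $V_0$, and nonempty, because two holomorphic discs through a common point that are leaves of the same foliation on the open set $V\cap V'$ must coincide there. Then on $\mathcal L_u$ both $\Phi$ and $\Phi'$ are harmonic (restriction of the MA form vanishes on leaves) with the same boundary values on $\mathcal L_u\cap \pi_D^{-1}(S^1)$ — here one pulls back to a $\rho$-chart and uses that both potentials restrict to $\omega_{|V_\tau}$-data coming from the same $\omega$ — so $\Phi=\Phi'$ on $\mathcal L_u$ by the maximum principle (exactly the argument in the proof of Proposition \ref{prop:localuniq}), and in particular $\Omega=\Omega'$ along $\mathcal L_u$. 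Since $V=\bigcup_{u\in V_0}\mathcal L_u\subseteq \bigcup_{u\in V_0'}\mathcal L_u'=V'$ and $\Omega=\Omega'$ on each $\mathcal L_u$, we get $V\subseteq V'$ and $\Omega=\Omega'$ on $V$.

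The main obstacle I anticipate is making rigorous the propagation of leaf-agreement across all of $V_0$, i.e.\ that ``two leaves of (a priori) two different foliations on $V$ and $V'$ which coincide near $\mathcal Y$ actually coincide everywhere their domains overlap.'' The clean way is to work on the overlap $W:=V\cap V'$, which is an $S^1$-invariant neighbourhood of some part of $\mathcal Y$, show the two foliations restrict to the same foliation on a neighbourhood of $\mathcal Y$ in $W$ (germ uniqueness), and then observe that a leaf of a foliation is a maximal connected integral submanifold, so once two leaves share an open piece they are equal on $W$; combined with completeness (each leaf of $V$ hits $V_0\subseteq V_0'$) this gives that every complete leaf of $V$ is contained in a complete leaf of $V'$, forcing the inclusion $V\subseteq V'$. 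One should also be slightly careful that $V\cap V'$ need not be connected, but restricting attention to the union of complete leaves meeting $V_0$ handles this since each such leaf is connected and meets $\mathcal Y$-neighbourhood region through its $S^1$-orbit structure — alternatively one runs the continuity argument fibrewise in $V_0$, which is connected because $N_Y$ is (and $V_0$ is $S^1$- and disc-invariant, hence connected along fibres and along $Y$). Once the leaves match, the potential identity and hence $\Omega=\Omega'$ is just the maximum-principle step already used, so no new difficulty arises there.
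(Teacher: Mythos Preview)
Your approach is essentially the paper's: show the leaves through each $u\in V_0$ agree in the two foliations via an open--closed argument, then use the maximum principle on leaves to get $\Phi=\Phi'$. Two points, though.

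Your stated reason for openness (``two holomorphic discs through a common point that are leaves of the same foliation on $V\cap V'$ must coincide'') is not what gives openness --- that is a uniqueness statement for a single foliation, not a statement about nearby leaves of two foliations. The actual mechanism is the one you invoke only afterwards: once $\mathcal L_u=\mathcal L'_u$, the maximum-principle argument (as in Proposition~\ref{prop:localuniq}) forces $\Phi=\Phi'$ on a \emph{neighbourhood} of that leaf, hence the two foliations coincide there, hence $\mathcal L_{u'}=\mathcal L'_{u'}$ for $u'$ near $u$. The maximum principle is the engine of propagation, not just the final identity. The paper packages this by parametrising radially: for fixed $u\in V_0$ it sets $T=\sup\{t\in[0,1]:\mathcal L_{tu}=\mathcal L'_{tu}\}$, using precisely the completeness clause $tu\in V_0\subseteq V_0'$, and shows this set of $t$ is open (via the maximum principle as above) and closed (continuity of leaves), so $T=1$. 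This avoids any separate connectedness discussion for $V_0$.

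Your alternative in the ``main obstacle'' paragraph --- that a leaf is a maximal integral submanifold, so once two leaves share an open piece they agree on $W=V\cap V'$ --- is circular: maximality is relative to a \emph{single} distribution, whereas here the two distributions on $W$ are a priori different, and their agreement on all of $W$ is what you are trying to prove. Drop this route and keep the continuity argument.
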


\begin{proof}
Let $\Phi$ and $\Phi'$ be the potentials of $\Omega$ and $\Omega'$ respectively. Pick a point $u\in V_0$ and let $\mathcal{L}_t$ denote the leaf in the Monge-Amp\`ere foliation of $\Omega$ that passes through $tu$ for $t\in [0,1]$. Let also $\mathcal{L}'_t$ denote the leaf in the Monge-Amp\`ere foliation of $\Omega'$ that passes through $tu$. We know that $\mathcal{L}_0=\mathcal{L}_0'$. Let $T:=\sup\{t: \mathcal{L}_t=\mathcal{L}'_t\}$. We claim that $T=1$. Indeed, if $\mathcal{L}_t=\mathcal{L}'_t$ then there is a neighbourhood $U\subseteq V\cap V'$ of $\mathcal{L}_t$ consisting of complete leaves of the Monge-Amp\`ere foliation of $\Omega$. On any such leaf $\Phi'-\Phi$ is subharmonic, and since this function is zero on the boundary of the leaf we get that $\Phi\geq \Phi'$. Similarly, using a neighbourhood of $\mathcal{L}_t=\mathcal{L}'_t$ consisting of leaves of the foliation associated to $\Omega'$ we get that in fact $\Phi=\Phi'$ in a neighbourhood of $\mathcal{L}_t,$ and hence the foliations agree there. This implies that the set of points $t\in [0,1]$ such that $\mathcal{L}_t=\mathcal{L}'_t$ is open. On the other hand it is closed since the leaves vary continuously with $t$. This thus shows that $\mathcal{L}_1=\mathcal{L}'_1$ and also that $\Phi=\Phi'$ on that leaf. Since $V$ is foliated by such leaves it gives us the proposition.
\end{proof}

\begin{remark}
Note that in the proof we only used the facts that $V$ and $V'$ were foliated by complete leaves, $V_0\subseteq N_Y,$ $V_0$ connected and $V_0\subseteq V'_0$.  
\end{remark}

Let $\zeta$ denote the vector field generated by the $S^1$-action on $V$. From the fact that $\Omega$ is cohomologous to $\pi_X^*\omega$ follows that the $S^1$-action is Hamiltonian, in the sense described in the next theorem. 

\begin{theorem}\label{thm:ham2}
Let $(V,\Omega)$ be a regular solution to the HMAE as in Theorem \ref{defconethm}. Then the function $H:=L_{J\zeta}\Phi$ is a Hamiltonian for the $S^1$-action, in that it satisfies 
\begin{equation} \label{eq:hamiltonian}
dH = \iota_{\zeta} \Omega.
\end{equation} 
$H$ is constant along the leaves of the Monge-Amp\`ere foliation associated to $\Omega$. Furthermore if $(V,\Omega)$ is complete then $H\ge 0$ with equality precisely on $\mathcal Y$.
\end{theorem}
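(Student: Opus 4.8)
The plan is to prove the three assertions in turn. The first two follow from Cartan's formula, and the third from recognising $H$ on the central fibre as a moment map and then reducing to a one-variable computation.

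For $dH=\iota_\zeta\Omega$, write $\Omega=\pi_X^*\omega+dd^c\Phi$ and handle the two summands separately. Because the $S^1$-action on $\mathcal N_Y$ covers the trivial action on $X$ (it rotates only the disc factor, and over $E$ the fibres of $N_Y$), the composition of $\pi_X$ with the action does not depend on the group parameter, so $(\pi_X)_*\zeta=0$ and hence $\iota_\zeta\pi_X^*\omega=0$. For the other term, Cartan's formula gives $\iota_\zeta dd^c\Phi=L_\zeta(d^c\Phi)-d(\iota_\zeta d^c\Phi)$. Since the action is holomorphic, $L_\zeta$ commutes with $d^c$, and $L_\zeta\Phi=0$ by the $S^1$-invariance of $\Phi$, so $L_\zeta(d^c\Phi)=d^c(L_\zeta\Phi)=0$; and $\iota_\zeta d^c\Phi=-L_{J\zeta}\Phi=-H$ by the usual relation between $d^c$ and $J$. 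Putting these together yields $\iota_\zeta\Omega=dH$. The statement that $H$ is constant along the leaves of the Monge-Amp\`ere foliation is then immediate: if $\xi$ is tangent to a leaf then $\iota_\xi\Omega=0$ by construction, so $\xi H=(\iota_\zeta\Omega)(\xi)=\Omega(\zeta,\xi)=-(\iota_\xi\Omega)(\zeta)=0$, and $dH$ annihilates the tangent space of each connected leaf.

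For the last assertion I would assume $(V,\Omega)$ is complete, so that $V$ is a union of complete leaves meeting $N_Y$, that $V_0=V\cap\pi_D^{-1}(0)\subseteq N_Y$, and that the completeness condition $u\in V_0\Rightarrow\tau u\in V_0$ makes each fibre $\mathcal V_p:=V_0\cap N_{Y,p}$ an $S^1$-invariant complete circular (disc-like) domain in $N_{Y,p}\cong\mathbb{C}^r$. Since every complete leaf surjects onto $D$ it meets $V_0$, and since $H$ is constant on leaves it suffices to compute $H$ on $V_0$ and then match its zero set with $\mathcal Y$. Restricting $dH=\iota_\zeta\Omega$ to the submanifold $V_0$, on which $\zeta$ is tangent and $\Omega|_{V_0}$ is K\"ahler, shows that $H|_{V_0}$ is a Hamiltonian (moment map) for the fibrewise rotation of $N_Y$ with respect to $\Omega|_{V_0}$; and $H(\iota(p))=L_{J\zeta}\Phi(\iota(p))=0$ because $\zeta$ vanishes at the fixed point $\iota(p)$. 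Fixing $p$: on $\mathcal V_p$ the map $\pi_X$ is constant, so $\Omega|_{\mathcal V_p}=dd^c\psi$ with $\psi:=\Phi|_{\mathcal V_p}$ strictly plurisubharmonic and $S^1$-invariant and $H|_{\mathcal V_p}=L_{J\zeta}\psi$. Restricting $\psi$ once more to a complex line $\ell$ through the origin — which by completeness meets $\mathcal V_p$ in an open disc — reduces matters to one complex variable, where $\psi|_\ell=g(|\lambda|^2)$ with $s\mapsto sg'(s)$ strictly increasing (which is exactly strict plurisubharmonicity of $\psi|_\ell$) and vanishing at $s=0$; a short computation then identifies $H$ along $\ell$ with $sg'(s)$ up to a positive constant, so $H\ge 0$ on $\mathcal V_p$ with equality only at the origin. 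Hence $H|_{V_0}\ge 0$ with zero set exactly $\iota(Y)$. Finally, for $q\in V$ lying on a leaf $\mathcal L$, choose $u\in\mathcal L\cap V_0$; then $H(q)=H(u)\ge 0$, and $H(q)=0$ iff $u\in\iota(Y)$, which (as $\mathcal L$ is then the unique leaf through $\iota(p)$, namely $\mathcal D_p$, using property~(4) in Definition~\ref{def:localregularsolution} together with $\mathcal D_p\cap V_0=\{\iota(p)\}$) holds iff $q\in\mathcal Y$.

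The first two parts are formal; the content is in the third. The two places where the hypotheses really enter are: (i) identifying $H|_{V_0}$ as a moment map and pinning its normalisation so that it vanishes on the zero section; and (ii) using completeness — which is precisely what forces every leaf to reach $V_0$ and makes $V_0$ fibrewise a complete circular domain — to reduce positivity to the one-variable convexity of $s\mapsto sg'(s)$. I expect the main (and fairly mild) obstacle to be carrying out this reduction to $V_0$ and the final identification of $\{H=0\}=\mathcal Y$ without gaps; the sign bookkeeping in the $d^c$/$J$ identities needs care but is routine.
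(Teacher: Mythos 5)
Your proposal is correct and follows essentially the same route as the paper: the identity $dH=\iota_\zeta\Omega$ via $\iota_\zeta dd^c\Phi=dL_{J\zeta}\Phi$ together with $\iota_\zeta\pi_X^*\omega=0$, constancy on leaves from $\xi\in\ker\Omega$, and positivity by restricting to the radial discs $\{\tau u:\tau\in D\}$ in $V_0$ (your monotonicity of $sg'(s)$ is the paper's strict convexity of $t\mapsto\Phi(e^t u)$ under $t=\ln s$). The only cosmetic difference is that you phrase the reduction through the fibres of $N_Y$ and a moment-map interpretation, which the paper does implicitly.
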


\begin{proof}
From the definition of the $dd^c$-operator we get that $$dL_{J\zeta}\Phi=\iota_{\zeta}dd^c\Phi=\iota_{\zeta}(\Omega-\pi_X^*\omega),$$ but on the other hand clearly $\iota_{\zeta}\pi_X^*\omega=0,$ and thus $H:=L_{J\zeta}\Phi$ is a Hamiltonian for the $S^1$-action.

The Lie derivative of $H$ with respect to the vector field $\xi$ obtained by the flow along a leaf of the associated foliation is
$$ L_{\xi} H =\iota_{\xi} d H = 0$$
where we have used \eqref{eq:hamiltonian} and that by definition $\xi$ lies in the kernel of $\Omega$.  Thus $H$ is constant along leaves.  As $\Phi$ is zero on $\mathcal Y$ and $\zeta$ is tangential to $\mathcal Y$ we get that $H=0$ along $\mathcal Y$.

It remains to prove that $H$ is strictly positive away from $\mathcal Y$ when the solution is complete. Since $H$ is constant along the leaves of the foliation it suffices to consider $H$ on the discs $\{\tau u:u\in V_0\setminus \iota(Y),\tau \in D\}\subseteq V_0$. We have that $$H(u)=\frac{d}{dt}_{t=0}\Phi(e^{t/2} u)$$ and since $\pi_X^*\omega$ restricts to zero on the disc the function $t\mapsto \Phi(e^t u)$ is strictly convex and $0$ at $t=-\infty$, which implies that $H(u)>0$ as long as $u\notin \iota(Y)$. 
\end{proof}

\subsection{Canonical solutions}

We will now show how to define a canonical complete solution $(V_{can},\Omega_{can})$, at least when $Y$ is compact.

\begin{definition}
Given a complete solution $(V,\Omega)$ with Hamiltonian $H$ we define the radius $rad(V,\Omega)$ of $(V,\Omega)$ to be the supremum of all $\lambda\geq 0$ such that $\partial H^{-1}([0,\lambda))\subseteq V$. We then define the \emph{canonical radius} $\Lambda_{can}$ to be the supremum of $rad(V,\Omega)$ over all complete solutions $(V,\Omega)$. 
\end{definition}

Note that the radius of a complete solution could be zero, and hencethe canonical radius $\Lambda_{can}$ could also be zero. But at least when $Y$ is compact we clearly have that $rad(V,\Omega)>0,$ and thus $\Lambda_{can}>0$. 

\begin{lemma} \label{lem:completerestr}
If $(V,\Omega)$ is a complete solution and $\lambda>0$ then $(H^{-1}([0,\lambda)),\Omega)$ is a new complete solution.
\end{lemma}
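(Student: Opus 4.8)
The plan is to verify directly that the pair $(W,\Omega)$, where $W:=H^{-1}([0,\lambda))$, satisfies all the defining conditions of a complete solution. First I would observe that $W$ is an open subset of $V$: indeed $H$ is smooth, hence continuous, so $H^{-1}([0,\lambda))=H^{-1}((-\infty,\lambda))$ is open (recall $H\ge 0$ on $V$ since $(V,\Omega)$ is complete by Theorem \ref{thm:ham2}). It is $S^1$-invariant because $H$ is $S^1$-invariant: $\zeta H = \iota_\zeta dH = \iota_\zeta\iota_\zeta\Omega = 0$ by \eqref{eq:hamiltonian} and antisymmetry of $\Omega$. Since $W$ is open in $V$, each fibre $W_\tau=W\cap V_\tau$ is a smooth manifold and the restriction $\Omega|_W$ is automatically a regular solution to the HMAE on $W$ with the induced boundary data: properties (1)--(3) of Theorem \ref{defconethm} are local and are simply inherited from $V$. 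The cohomological condition $\Omega|_W = \pi_X^*\omega + dd^c(\Phi|_W)$ is likewise inherited, with $\Phi|_W$ vanishing on $W_\tau$ for $\tau\in S^1$ since $W_\tau = V_\tau$ there (as $H=0$ on $\mathcal Y$ and we shall see $W$ contains a neighbourhood of $\mathcal Y$, but in any case $\Phi|_{V_\tau}\equiv 0$).

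The substantive points are the three conditions in the definition of \emph{complete solution}: that $W$ is foliated by complete leaves of the Monge-Amp\`ere foliation, that $W_0\subseteq N_Y$, and that $u\in W_0$ implies $\tau u\in W_0$ for all $\tau\in D$. For the foliation condition, the key fact from Theorem \ref{thm:ham2} is that $H$ is constant along the leaves of the Monge-Amp\`ere foliation of $\Omega$. Hence if a leaf $\mathcal L$ (a complete leaf of the foliation on $V$, which foliates $V$ by hypothesis) meets $W$ at one point, it has $H<\lambda$ on all of $\mathcal L$, so $\mathcal L\subseteq W$; thus $W$ is a union of complete leaves of the foliation, and these manifestly foliate $W$. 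For $W_0\subseteq N_Y$: since $W_0\subseteq V_0\subseteq N_Y$ (the latter because $(V,\Omega)$ is complete), this is immediate. Finally, for the scaling condition: if $u\in W_0$, then $u\in V_0$ so $\tau u\in V_0$ for all $\tau\in D$ by completeness of $(V,\Omega)$; it remains to check $H(\tau u)<\lambda$. Here I would use the description of $H$ on the discs through $V_0$ from the proof of Theorem \ref{thm:ham2}: writing $\tau = e^t$ (for $\tau\neq 0$) the function $t\mapsto \Phi(e^t u)$ is convex, so $H(e^t u) = \frac{d}{ds}\big|_{s=0}\Phi(e^{s/2}e^t u)$ is non-decreasing in $t$; combined with $H\ge 0$ this gives $0\le H(\tau u)\le H(u)<\lambda$ for $|\tau|\le 1$, and $H(0\cdot u)=0<\lambda$ since $0\cdot u\in\mathcal Y$. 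Hence $\tau u\in W_0$.

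I expect the main obstacle to be the scaling condition $u\in W_0 \Rightarrow \tau u\in W_0$, since it is the only one that is not purely a matter of openness or of invoking constancy of $H$ along leaves; it requires knowing that $H$ is monotone along the radial discs in $N_Y$, which one extracts from the strict-convexity argument already used in the proof of Theorem \ref{thm:ham2}. Everything else — openness, $S^1$-invariance, inheritance of the HMAE properties and of the potential, the foliation condition — follows formally from the fact that $W$ is an $S^1$-invariant open union of leaves and that $H$ is a leafwise-constant Hamiltonian. A short remark at the end could note that $rad(H^{-1}([0,\lambda)),\Omega) = \min(\lambda, rad(V,\Omega))$, which is the property that makes this lemma useful for the later construction of $(V_{can},\Omega_{can})$.
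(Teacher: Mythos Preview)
Your proposal is correct and follows essentially the same approach as the paper's proof, which is very terse and only explicitly addresses the two substantive points: that $H$ is constant along the leaves (so $H^{-1}([0,\lambda))$ is a union of complete leaves), and that the convexity argument from the proof of Theorem~\ref{thm:ham2} gives $H(\tau u)\le H(u)$ for $\tau\in D$ (so the scaling condition holds). Your write-up is simply more careful in checking all the ancillary conditions (openness, $S^1$-invariance, inheritance of the HMAE and cohomological properties, $W_0\subseteq N_Y$); one small slip is the parenthetical claim that $W_\tau=V_\tau$ for $\tau\in S^1$, which is not generally true, but you immediately supply the correct reason (namely $\Phi|_{V_\tau}\equiv 0$), so this does no harm.
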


\begin{proof}
Since $H$ is constant along the leaves we see that $H^{-1}([0,\lambda)$ is still foliated by complete leaves, and from the proof of Theorem \ref{thm:ham2} we see that $H(\tau u)\leq H(u)$ when $\tau\in D,$ showing that the discs $\{\tau u:\tau\in D\}$ lie in $H^{-1}([0,\lambda))$ when $u$ does.
\end{proof}

\begin{proposition} \label{prop:completeuniq2}
If $(V,\Omega)$ and $(V',\Omega')$ are two complete solutions with $$rad(V,\Omega)\leq rad(V',\Omega')$$ then letting $\lambda:=rad(V,\Omega)$ we have that $$H^{-1}([0,\lambda))=(H')^{-1}([0,\lambda))$$ and $\Omega=\Omega'$ there.
\end{proposition}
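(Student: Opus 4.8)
The plan is to compare the two complete solutions by reducing to the uniqueness results already established, in particular Proposition \ref{prop:completeuniq}. First I would set $\lambda := rad(V,\Omega)$ and invoke Lemma \ref{lem:completerestr} to replace $(V,\Omega)$ by the complete solution $(W,\Omega)$ with $W := H^{-1}([0,\lambda))$, and similarly replace $(V',\Omega')$ by $(W',\Omega')$ with $W' := (H')^{-1}([0,\lambda))$; note $rad(W',\Omega') \ge \lambda$ since $rad(V',\Omega') \ge \lambda$ and restricting does not decrease the radius below $\lambda$. The key point is then to show $W_0 = W'_0$ as subsets of $N_Y$, after which Proposition \ref{prop:completeuniq} applied in both directions forces $W = W'$ and $\Omega = \Omega'$ on all of $W$.

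To compare the zero fibres, I would use that on the central fibre $V_0 \subseteq N_Y$ the Hamiltonian $H$ is, along each disc $\{\tau u : \tau \in D\}$, given by $H(u) = \tfrac{d}{dt}\big|_{t=0}\Phi(e^{t/2}u)$ with $t \mapsto \Phi(e^t u)$ strictly convex and vanishing at $t = -\infty$ (from the proof of Theorem \ref{thm:ham2}). Thus along each ray through the origin in $N_Y$, $H$ is strictly increasing in the radial parameter, so $W_0 = H^{-1}([0,\lambda)) \cap V_0$ is a ``star-shaped'' $S^1$-invariant domain, and similarly for $W'_0$. Using Proposition \ref{prop:completeuniq} I would argue that on the overlap where one of the two zero fibres is contained in the other the potentials and foliations agree, hence $H$ and $H'$ agree there; feeding this back into the definitions of $W_0$ and $W'_0$ shows the two star-shaped domains coincide. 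Concretely: let $W''_0 := W_0 \cap W'_0$, which is an $S^1$-invariant star-shaped neighbourhood of $\iota(Y)$ contained in both $V_0$ and $V'_0$; by Proposition \ref{prop:completeuniq} (applied with the smaller zero fibre) the solutions agree over the corresponding complete-leaf neighbourhood, so $H = H'$ there; if $W_0 \subsetneq W'_0$ at some point, radial strict monotonicity of $H'$ together with $H = H'$ up to the boundary of $W_0$ would let one extend $W_0$ past $\partial W_0 \cap W'_0$ inside $V$ while keeping $H < \lambda$, and since $\partial H^{-1}([0,\lambda)) \subseteq V$ this contradicts the maximality encoded in $\lambda = rad(V,\Omega)$; hence $W_0 = W'_0$.

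Once $W_0 = W'_0$ is established, Proposition \ref{prop:completeuniq} gives $W \subseteq W'$, $W' \subseteq W$ and $\Omega = \Omega'$ on $W$; since $W = H^{-1}([0,\lambda))$ and $W' = (H')^{-1}([0,\lambda))$ this is exactly the claimed equality $H^{-1}([0,\lambda)) = (H')^{-1}([0,\lambda))$ together with $\Omega = \Omega'$ there.

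The main obstacle I anticipate is making rigorous the step ``if $W_0 \subsetneq W'_0$ then one can enlarge $W$'' — i.e.\ showing that the complete solution $(V,\Omega)$, which a priori is only defined on $V$, actually agrees with $(V',\Omega')$ on a neighbourhood of every point of $\partial W_0 \cap V'_0$ and therefore that $H$ extends beyond $W_0$ as a solution-Hamiltonian, contradicting the choice of $\lambda$ as the radius. This requires carefully unwinding the definition of $rad$ (the condition $\partial H^{-1}([0,\lambda)) \subseteq V$) and the fact that complete leaves through points near $\partial W_0$ stay in $V$; the boundary behaviour of the foliation and the star-shapedness of $W_0$ in $N_Y$ are what make this work, but the bookkeeping of which neighbourhoods lie in which of $V, V'$ is the delicate part.
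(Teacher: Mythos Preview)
Your approach is broadly workable but more roundabout than the paper's, and the ``main obstacle'' you flag is exactly where the proof lives, not merely bookkeeping. The paper does \emph{not} reduce to Proposition~\ref{prop:completeuniq} by first showing $W_0=W'_0$; instead it reruns the open--closed argument of that proposition directly. After restricting (via Lemma~\ref{lem:completerestr}) to $V=H^{-1}([0,\lambda))$ and $V'=(H')^{-1}([0,\lambda))$, it fixes $u\in V_0$, sets $\mathcal L_t,\mathcal L'_t$ to be the leaves through $tu$ of the two foliations, and shows $\{t:\mathcal L_t=\mathcal L'_t\}$ is open and closed in $[0,1]$. Openness is the same maximum-principle step as before. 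Closedness is the new point: since the foliations agree for $t<T$, one has $H'(tu)=H(tu)$ there, hence $\lim_{t\to T}H'(tu)=\lim_{t\to T}H(tu)<\lambda$; because $rad(V',\Omega')\ge\lambda$, this forces $Tu\in V'$, and then continuity of the leaves gives $\mathcal L_T=\mathcal L'_T$.

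Your version hides this same step inside ``extend $W_0$ past $\partial W_0\cap W'_0$'', and two points need correcting. First, from $rad(V,\Omega)=\lambda$ you do \emph{not} get $\partial H^{-1}([0,\lambda))\subseteq V$ (the radius is a supremum); you only get $\partial H^{-1}([0,\mu))\subseteq V$ for every $\mu<\lambda$, which is what is actually needed. Second, the contradiction is not with ``maximality of $\lambda$'': it is that any boundary point $Tu$ of $W_0$ lying in $W'_0$ satisfies $\lim_{t\to T^-}H(tu)=H'(Tu)<\lambda$, so choosing $\mu\in(H'(Tu),\lambda)$ and using $\partial H^{-1}([0,\mu))\subseteq V$ gives $Tu\in V_0$, hence $H(Tu)=H'(Tu)<\lambda$ by continuity, i.e.\ $Tu\in W_0$ after all. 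Once you write this cleanly you have essentially reproduced the paper's argument; the detour through Proposition~\ref{prop:completeuniq} buys nothing, since establishing $W_0=W'_0$ already requires the ray-by-ray open--closed argument that proves the proposition directly.
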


\begin{proof}
Clearly we can assume that $\lambda>0$ since otherwise the statement is vacuous. By Lemma \ref{lem:completerestr} the solutions $(H^{-1}([0,\lambda)),\Omega)$ and $((H')^{-1}([0,\lambda)),\Omega')$ are still complete, so for ease of notation we may as well assume that $V=H^{-1}([0,\lambda))$ and $V'=(H')^{-1}([0,\lambda))$ (and hence $rad(V',\Omega')=\lambda$). Pick a point $u\in V_0,$ then we know that $H(u)<\lambda$. We now use an argument similar similar to the one in the proof of Proposition \ref{prop:completeuniq}. Thus we let $\mathcal{L}_t$ denote the leaf in the Monge-Amp\`ere foliation of $\Omega$ that passes through $tu$ for $t\in [0,1]$. We also let $\mathcal{L}'_t$ denote the leaf in the Monge-Amp\`ere foliation of $\Omega'$ that passes through $tu,$ if there is any such leaf. We let $T:=\sup\{t: \mathcal{L}_t=\mathcal{L}'_t\}$ and again we claim that $T=1$. That the set of points $t\in [0,1]$ such that $\mathcal{L}_t=\mathcal{L}'_t$ is open follows exactly as before. On the other hand, $$\lim_{t\to T}H'(tu)=\lim_{t\to T}H(tu)<\lambda$$ which since $rad(V',\Omega')=\lambda$ implies that $Tu\in V'$. Then the closedness follows as before since the leaves vary continuously with $t,$ which exactly as in the proof of Proposition \ref{prop:completeuniq} gives us the desired equalities.
 
\end{proof}

Armed with this result it is immediate how to construct a canonical complete solution $(V_{can},\Omega_{can})$ when the canonical radius $\Lambda_{can}$ is positive. Namely, let $(V_i,\Omega_i)$ be any sequence of complete solutions such that $\lambda_i:=rad(V_i,\Omega_i)$ is increasing to $\Lambda_{can}$. Then we define $$V_{can}:=\cup_i H_i^{-1}([0,\lambda_i))$$ and we let $\Omega_{can}$ be defined to be equal to $\Omega_i$ on $H_i^{-1}([0,\lambda_i))$. That this is a well-defined complete solution now follows immediately from Proposition \ref{prop:completeuniq2}. We have thus proved Theorem \ref{thm:canonicalhmae} from the Introduction:   

\begin{theorem} \label{thm:canonicalhmae2}
Let $Y$ be compact (or more generally assume that $\Lambda_{can}>0$). Then there is a unique complete solution $(V_{can},\Omega_{can})$ such that $$rad(V_{can},\Omega_{can})=\Lambda_{can}$$ and $$H_{can}< \Lambda_{can}.$$ This canonical solution is maximal in the following sense. If $(V,\Omega)$ is any other complete solution then for any $\lambda<rad(V,\Omega)$ we have that $$H^{-1}([0,\lambda))=H^{-1}_{can}([0,\lambda))$$ and there $\Omega=\Omega_{can}$.
\end{theorem}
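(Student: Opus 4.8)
The plan is to build $(V_{can},\Omega_{can})$ directly as an increasing union of (restrictions of) complete solutions, and then to extract every assertion of the theorem from Proposition \ref{prop:completeuniq2}. Under the hypothesis we have $\Lambda_{can}>0$ (when $Y$ is compact every complete solution has strictly positive radius, as noted before the theorem), so by definition of $\Lambda_{can}$ we may choose a sequence of complete solutions $(V_i,\Omega_i)$ with $\lambda_i:=rad(V_i,\Omega_i)$ non-decreasing and $\lambda_i\nearrow\Lambda_{can}$. By Lemma \ref{lem:completerestr} each pair $(H_i^{-1}([0,\lambda_i)),\Omega_i)$ is again a complete solution, and Proposition \ref{prop:completeuniq2} applied to $(V_i,\Omega_i)$ and $(V_j,\Omega_j)$ for $i\le j$ gives $H_i^{-1}([0,\lambda_i))=H_j^{-1}([0,\lambda_i))$ with $\Omega_i=\Omega_j$ there. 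Hence the sets $H_i^{-1}([0,\lambda_i))$ increase with $i$ and the forms agree on overlaps, so $V_{can}:=\bigcup_i H_i^{-1}([0,\lambda_i))$ is open (and a neighbourhood of $\mathcal Y$, since each piece is), and $\Omega_{can}$ defined to equal $\Omega_i$ on the $i$-th piece is well defined, with potential $\Phi_{can}$ the corresponding union of the $\Phi_i$. That $(V_{can},\Omega_{can})$ is a regular solution as in Theorem \ref{defconethm} and is complete (foliation by complete leaves, $V_0\subseteq N_Y$, invariance of $V_0$ under scaling by $D$) are conditions checked locally or fibrewise and so are inherited from the pieces.

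Next I would pin down the two defining numerical properties. For $H_{can}<\Lambda_{can}$: any $x\in V_{can}$ lies in some $H_i^{-1}([0,\lambda_i))$, where $\Phi_{can}=\Phi_i$ and hence $H_{can}=H_i$, so $H_{can}(x)=H_i(x)<\lambda_i\le\Lambda_{can}$. For $rad(V_{can},\Omega_{can})=\Lambda_{can}$, the inequality $\le$ is immediate from the definition of $\Lambda_{can}$. For $\ge$, fix $\mu<\Lambda_{can}$ and pick $i$ with $\lambda_i>\mu$; I claim $H_{can}^{-1}([0,\mu))=H_i^{-1}([0,\mu))$. Indeed $H_{can}=H_i$ on $H_i^{-1}([0,\lambda_i))$, and if $x\in H_{can}^{-1}([0,\mu))$ lies in $H_j^{-1}([0,\lambda_j))$ for some $j$ then $H_j(x)=H_{can}(x)<\mu<\lambda_i$, so $x\in H_j^{-1}([0,\lambda_i))=H_i^{-1}([0,\lambda_i))$ by Proposition \ref{prop:completeuniq2}; this proves the claim. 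Consequently $\partial H_{can}^{-1}([0,\mu))=\partial H_i^{-1}([0,\mu))$, which is contained in $V_i$ because $\mu<\lambda_i=rad(V_i,\Omega_i)$; moreover its points have $H_i=\mu<\lambda_i$, hence lie in $H_i^{-1}([0,\lambda_i))\subseteq V_{can}$. Thus $\mu\le rad(V_{can},\Omega_{can})$ for every $\mu<\Lambda_{can}$, giving equality.

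Finally I would deduce maximality and uniqueness. If $(V,\Omega)$ is any complete solution then $rad(V,\Omega)\le\Lambda_{can}=rad(V_{can},\Omega_{can})$, so Proposition \ref{prop:completeuniq2} with $\mu:=rad(V,\Omega)$ gives $H^{-1}([0,\mu))=H_{can}^{-1}([0,\mu))$ and $\Omega=\Omega_{can}$ there; restricting to any $\lambda<rad(V,\Omega)$ yields the stated maximality. For uniqueness, suppose $(V',\Omega')$ is another complete solution with $rad(V',\Omega')=\Lambda_{can}$ and $H'<\Lambda_{can}$. Proposition \ref{prop:completeuniq2} applied with $\lambda=\Lambda_{can}$ gives $H_{can}^{-1}([0,\Lambda_{can}))=(H')^{-1}([0,\Lambda_{can}))$ and $\Omega_{can}=\Omega'$ there; but $H_{can}<\Lambda_{can}$ forces $H_{can}^{-1}([0,\Lambda_{can}))=V_{can}$ and likewise $(H')^{-1}([0,\Lambda_{can}))=V'$, so $(V',\Omega')=(V_{can},\Omega_{can})$. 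I do not expect a genuine obstacle in any of this—the whole argument is bookkeeping on top of Lemma \ref{lem:completerestr} and Proposition \ref{prop:completeuniq2}—the one step deserving care being the identity $H_{can}^{-1}([0,\mu))=H_i^{-1}([0,\mu))$ together with the check that $\partial H_{can}^{-1}([0,\mu))$ does not escape $V_{can}$, which is exactly what fixes $rad(V_{can},\Omega_{can})=\Lambda_{can}$.
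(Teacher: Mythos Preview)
Your proposal is correct and follows essentially the same construction as the paper: take a sequence of complete solutions $(V_i,\Omega_i)$ with radii $\lambda_i\nearrow\Lambda_{can}$, set $V_{can}=\bigcup_i H_i^{-1}([0,\lambda_i))$ with $\Omega_{can}$ glued from the $\Omega_i$, and deduce everything from Lemma \ref{lem:completerestr} and Proposition \ref{prop:completeuniq2}. You simply spell out more of the bookkeeping (the verification of $rad(V_{can},\Omega_{can})=\Lambda_{can}$, maximality, and uniqueness) than the paper does, which leaves these as immediate consequences of Proposition \ref{prop:completeuniq2}.
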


\section{Tubular Neighbourhoods} \label{sec:tubular}

Let $Y$ be a submanifold in a K\"ahler manifold $(X,\omega)$. Recall that $\pi\colon N_Y\to Y$ denotes the normal bundle and $\iota\colon Y\to N_Y$ is the inclusion of $Y$ as the zero section in $N_Y$.

\begin{definition}
A \emph{tubular neighbourhood} of $Y$ in $X$ is a smooth map $T:U \to X$ from an open neighbourhood $U$ of $\iota(Y)$ in $N_Y$ which is a diffeomorphism onto a neighbourhood $T(U)$ of $Y$ with $T\circ\iota = \id_Y$.
\end{definition}

Suppose that $\Omega$ is as provided by Theorem \ref{thm:hmae:repeat}, so is a regular solution to  the HMAE  on a neighbourhood $V\subset \mathcal N_Y$ of the proper transform $\mathcal Y$ of $Y\times D$ with boundary data $\omega$.  Recall that $U:=V_0$ denotes the central fibre of $V$, which is a neighbourhood of $\iota(Y)\subset N_Y$.

\begin{definition}
We let $\omega_{N_Y}$ be the restriction of the regular solution $\Omega$ of  the HMAE  to $U$.
\end{definition}

So $\omega_{N_Y}$ is an $S^1$-invariant K\"ahler form on $U$, and by the uniqueness property of regular solutions, the germ of this K\"ahler form around $\iota(Y)\subset N_Y$ is independent of choice of $\Omega$.

We recall that $V$ was assumed to be a union of complete leaves of the Monge-Amp\`ere foliation determined by $\Omega$.  Thus flowing along these leaves gives an injective smooth map
$$ \hat{T}\colon U\times D \to V$$
such that $\pi_D \hat{T}(u,\tau)=\tau$ and for each $u\in U$ the map $\hat{T}_u(\tau)=\hat{T}(u,\tau)$ is holomorphic. The family of maps $\hat{T}(\cdot,\tau)$ has an additional remarkable property. 

\begin{proposition} \label{prop:pullback}
For any $\tau\in D$ we have that
\begin{equation} \label{eq:pullback}
\hat{T}(\cdot,\tau)^*(\Omega_{|V_{\tau}})=\omega_{N_Y}.
\end{equation}
\end{proposition}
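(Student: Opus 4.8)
The plan is to show that the pullback $\hat T(\cdot,\tau)^*(\Omega_{|V_\tau})$ is independent of $\tau$, and then evaluate at $\tau = 0$ where it is by definition $\omega_{N_Y}$. Fix $u_0 \in U$ and local coordinates near $u_0$, so that $\hat T$ becomes a map from a neighbourhood in $\mathbb C^n \times D$ into $V$, with $\hat T(\cdot,\tau)$ a diffeomorphism onto $V_\tau$ and each $\hat T_u$ a holomorphic disc lying in a leaf of the Monge-Amp\`ere foliation. Let $\xi$ denote the vector field on $V$ obtained by pushing forward $\partial/\partial\tau$ under $\hat T$ along the leaves (equivalently, the unique lift of $\partial_\tau$ tangent to the leaves), and let $\Omega_\tau := \hat T(\cdot,\tau)^*(\Omega_{|V_\tau})$, a family of $(1,1)$-forms on the fixed manifold $U$. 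The key identity I want is
\begin{equation} \label{eq:derivzero}
\frac{\partial}{\partial \tau}\,\Omega_\tau = 0,
\end{equation}
which would follow from a Cartan-type computation: the $\tau$-derivative of the pullback is the pullback of the Lie derivative $\mathcal L_\xi \Omega$ restricted appropriately to the fibre, and by Cartan's formula $\mathcal L_\xi \Omega = d(\iota_\xi \Omega) + \iota_\xi(d\Omega)$. Since $\Omega$ is closed the second term vanishes, so I must show that $\iota_\xi \Omega$ pulls back to something whose fibrewise exterior derivative vanishes. But $\xi$ lies in the kernel of $\Omega$ (it is tangent to the leaves, and $\Omega$ restricted to each leaf is zero, while the leaves are one-(complex-)dimensional so $\xi$ together with $J\xi$ spans the leaf tangent space), hence $\iota_\xi \Omega$ kills all directions tangent to the leaf; restricted to the fibre directions one checks $\iota_\xi\Omega$ actually vanishes because $\Omega$ pairs the leaf-direction trivially. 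More carefully: since $\Omega^{n+1}=0$ and $\Omega_{|V_\tau}$ is non-degenerate, the kernel of $\Omega$ (as a $2$-form on $V$) is exactly the real $2$-plane field tangent to the leaves, so $\iota_\xi\Omega = 0$ identically on $V$, giving $\mathcal L_\xi\Omega = 0$ and hence \eqref{eq:derivzero}.

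From \eqref{eq:derivzero} it follows that $\Omega_\tau$ is constant in $\tau$ on the connected disc $D$, and evaluating at $\tau = 0$ gives $\Omega_\tau = \hat T(\cdot,0)^*(\Omega_{|V_0}) = \Omega_{|V_0} = \omega_{N_Y}$, since $\hat T(\cdot,0)$ is the identity on $U = V_0$. This is \eqref{eq:pullback}.

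The step I expect to be the main obstacle is the careful justification that $\iota_\xi \Omega = 0$ on all of $V$ (not merely that $\Omega$ vanishes along leaves), and the bookkeeping relating $\frac{d}{d\tau}$ of a pullback by a $\tau$-dependent family of maps $\hat T(\cdot,\tau)$ to a Lie derivative along $\xi$ — this requires being precise about the fact that $\hat T$ is defined by flowing along the (non-autonomous, since the fibres move) foliation, so $\xi$ is genuinely the infinitesimal generator of $\tau \mapsto \hat T(\cdot,\tau)\circ \hat T(\cdot,\tau_0)^{-1}$. Once that is set up, the computation $\frac{\partial}{\partial\tau}\big(\hat T(\cdot,\tau)^*\Omega\big) = \hat T(\cdot,\tau)^*(\mathcal L_\xi \Omega)$ is standard, and the closedness and degeneracy of $\Omega$ do the rest. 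One should also note that the argument is entirely local on $U$, so there are no issues with $V$ being merely a germ around $\mathcal Y$; the identity holds wherever $\hat T$ is defined.
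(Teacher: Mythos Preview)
Your proposal is correct and follows essentially the same route as the paper: show the pullback is $\tau$-independent by computing the Lie derivative of $\Omega$ along the lift of $\partial_\tau$ tangent to the foliation, use Cartan's formula together with $d\Omega=0$ and $\iota_\xi\Omega=0$ (since $\xi$ lies in the kernel of $\Omega$) to conclude $\mathcal L_\xi\Omega=0$, and then evaluate at $\tau=0$. The paper's proof is terser but identical in content; the concerns you flag (that $\iota_\xi\Omega$ vanishes identically and that the $\tau$-derivative of the pullback equals the pullback of the Lie derivative) are exactly the two ingredients it uses without further comment.
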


\begin{proof}
This is classical (see e.g. \cite{BedfordKalka} or \cite{Donaldson}) but for the convenience of the reader we give the simple argument here.

When differentiating the left hand side of (\ref{eq:pullback}) by a vector field $v$ on the base one gets $$\hat{T}(\cdot,\tau)^*(L_{\tilde{v}}\Omega_{|V_{\tau}})$$ where $\tilde{v}$ is the unique lift of $v$ to $V$ parallel to the foliation, and at the same time by Cartan's formula $$L_{\tilde{v}}\Omega=\iota_{\tilde{v}}d\Omega+d\iota_{\tilde{v}}\omega=0$$ since $\Omega$ is closed and $\tilde{v}$ lies in the kernel. Thus the left hand side of (\ref{eq:pullback}) is independent of $\tau,$ so letting $\tau$ be zero completes the proof.
\end{proof}

\begin{definition}
  We define $T\colon U\to X$ by 
  \begin{equation}
T(u) = \hat{T}(u,1)\label{eq:defineT}
\end{equation}

\end{definition}

\begin{lemma}\label{lem:jistubular}
   $T:U\to X$ is a tubular neighbourhood of $Y$ and $T^* \omega = \omega_{N_Y}$.  In particular $T^* \omega$ is K\"ahler and $S^1$-invariant.
\end{lemma}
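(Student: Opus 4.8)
The plan is to verify directly that the map $T=\hat{T}(\cdot,1)\colon U\to X$ defined in \eqref{eq:defineT} is a tubular neighbourhood and that it pulls back $\omega$ to $\omega_{N_Y}$. The identity $T^*\omega=\omega_{N_Y}$ is really the content of Proposition \ref{prop:pullback} specialized to $\tau=1$: indeed $V_1=\pi_D^{-1}(1)\cap V$ is identified with an open subset of $X$ via the biholomorphism $\pi$ (which is an isomorphism away from the exceptional divisor $E$, and $V_1$ is disjoint from $E$), and under this identification $\Omega_{|V_1}=\omega_{|V_1}$ by property (2) of Theorem \ref{thm:hmae:repeat} (boundary data induced by $\omega$, recalling $1\in S^1$). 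So $T^*\omega = \hat{T}(\cdot,1)^*(\Omega_{|V_1}) = \omega_{N_Y}$ by Proposition \ref{prop:pullback}. Since $\omega_{N_Y}$ was defined as $\Omega_{|V_0}$, which is $S^1$-invariant and K\"ahler (property (1)), the final two assertions follow immediately.

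The remaining point is that $T$ is genuinely a tubular neighbourhood in the sense of the definition above, i.e. that it is a smooth diffeomorphism from $U$ onto an open neighbourhood of $Y$ in $X$ satisfying $T\circ\iota=\id_Y$. First I would check the normalization $T\circ\iota=\id_Y$: for $p\in Y$, the point $\iota(p)\in V_0$ lies on the complete leaf $\mathcal D_p$ (this is exactly the $S^1$-invariance consequence recorded before the Definition of complete solutions, and is built into property (4) of a local regular solution, Definition \ref{def:localregularsolution}), so flowing $\iota(p)$ along its leaf to the fibre over $1$ lands at $\mathcal D_p\cap V_1$, which under the identification $V_1\cong X$ is the point $p$. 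Hence $T(\iota(p))=p$. Next, smoothness of $T$ follows because $\hat T$ is smooth (flow along a smooth foliation, as noted just before Proposition \ref{prop:pullback}) and $T$ is the composition of $\hat T(\cdot,1)$ with the biholomorphism $\pi|_{V_1}$.

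To see $T$ is a diffeomorphism onto an open subset, I would argue it is an injective local diffeomorphism. Injectivity of $u\mapsto \hat T(u,1)$ is part of the stated injectivity of $\hat T\colon U\times D\to V$. For the local diffeomorphism property, note that $T^*\omega=\omega_{N_Y}$ is a K\"ahler (hence nondegenerate) form on $U$, so $DT$ has full rank everywhere; equivalently, each leaf meets each fibre $V_\tau$ transversally because $\Omega$ restricts to a K\"ahler form on the fibres while vanishing on the leaves, so the projection of a leaf to $D$ is a local biholomorphism, and flowing from $V_0$ to $V_1$ is a diffeomorphism onto its image. Thus $T(U)$ is open in $X$ and contains $Y$ (by $T\circ\iota=\id_Y$), so it is an open neighbourhood of $Y$, and $T\colon U\to T(U)$ is a diffeomorphism. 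The only mild subtlety — the step I expect to need the most care — is the bookkeeping identifying $V_1\subset\mathcal N_Y$ with an open subset of $X$ and confirming that the proper transform $\mathcal D_p$ meets $V_1$ precisely in $p$ under this identification; once that is pinned down the rest is formal. I would phrase the proof as: combine Proposition \ref{prop:pullback} at $\tau=1$ with property (2) of Theorem \ref{thm:hmae:repeat} to get $T^*\omega=\omega_{N_Y}$; deduce K\"ahlerness, $S^1$-invariance, the full-rank of $DT$, and hence (with injectivity of $\hat T$ and $T\circ\iota=\id_Y$) that $T$ is a tubular neighbourhood.
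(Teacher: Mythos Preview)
Your proposal is correct and follows exactly the paper's approach: the paper's proof consists of two sentences, asserting that $T$ is a tubular neighbourhood ``by construction'' and that $T^*\omega=\omega_{N_Y}$ follows from Proposition~\ref{prop:pullback}. You have simply unpacked what ``by construction'' means (checking $T\circ\iota=\id_Y$ via the leaves $\mathcal D_p$, injectivity via that of $\hat T$, and the local diffeomorphism property via nondegeneracy of $T^*\omega=\omega_{N_Y}$), which is entirely appropriate and requires no additional ideas beyond what the paper intends.
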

\begin{proof}
By construction $T:U\to X$ is a tubular neighbourhood.  The second statement follows from Proposition \ref{prop:pullback}.
\end{proof}

The next result completes the Proof of Theorem \ref{thm:tubularmain} in the Introduction.

\begin{proposition}
  Let $u\in U$ and $f_u\colon S^1\to X$ be given by $f_u(e^{i\theta}) = T(e^{i\theta}u)$ for $e^{i\theta}\in S^1$.  Then there exists a holomorphic $F_u\colon D\to X$ extending $f_u$ such that
  \begin{equation*}
 F_u(0) =\pi(u) \text{ and } \left[DF_u|_0(\frac{\partial}{\partial x})\right] = u.\label{eq:bound}
 \end{equation*}
\end{proposition}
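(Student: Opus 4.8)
The plan is to produce the holomorphic extension $F_u$ directly from the Monge--Amp\`ere foliation by rescaling the leaf through $u$, and then to identify its value and derivative at $0$ using the structure of the $\rho$-chart. First I would fix $u\in U=V_0$ and consider the complete leaf $\mathcal L_u$ of the Monge--Amp\`ere foliation of $\Omega$ passing through $u\in N_Y\subset E$. By construction $\hat T(u,\cdot)\colon D\to V$ is a holomorphic parametrization of (an open piece of) $\mathcal L_u$ with $\pi_D\circ\hat T(u,\tau)=\tau$, and composing with $\pi_X\colon\mathcal N_Y\to X$ gives a holomorphic map $G_u:=\pi_X\circ\hat T(u,\cdot)\colon D\to X$. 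The natural candidate is then $F_u:=G_u$, and the first task is to check that it extends $f_u$. For $e^{i\theta}\in S^1$ we have $\hat T(u,e^{i\theta})\in V_{e^{i\theta}}$, and since $\pi$ is an isomorphism over $X\times\{e^{i\theta}\}$ we may identify $\pi_X(\hat T(u,e^{i\theta}))$ with $T_{e^{i\theta}}(u)$; I would need to check $T_{e^{i\theta}}(u)=T(e^{i\theta}u)=f_u(e^{i\theta})$, which follows from the $S^1$-invariance of the foliation (the $S^1$-action intertwines the leaf through $u$ with the leaf through $e^{i\theta}u$, and rotating $\tau$ accordingly), exactly as in the passage preceding \eqref{eq:pullbacknew}.

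Next I would compute $F_u(0)$ and the derivative at $0$, and this is most cleanly done in the $\rho$-chart $\Gamma\colon B_\delta\times D\to V$ of Section~\ref{sec:local} centered at a point $p\in Y$ with $u$ lying over $p$. Recall $\Gamma(z,w)=(f(wz',z''),w)$, so that $\pi_X\circ\Gamma(z,w)=f(wz',z'')$. By Lemma~\ref{lemma:invariantleaves} the constant discs $\{(0,z'')\}\times D$ are leaves; more generally, the leaf through a point of $N_Y\subset E$ is, in these coordinates, a graph $w\mapsto(wg(w),z'')$ for a holomorphic germ $g$, and $u$ corresponds via $DF_u|_0$ to the normal vector $g(0)$. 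Then $\pi_X\circ\hat T(u,w)=f(w\,g(w),z'')$, which manifestly sends $0\mapsto f(0,z'')=\pi(u)$, giving $F_u(0)=\pi(u)$. Differentiating in $w$ at $0$ gives $DF_u|_0(\tfrac{\partial}{\partial x})=Df|_{(0,z'')}(g(0),0)$, whose class in the normal bundle $N_Y=TX|_Y/TY$ is precisely the normal vector labeling $u$; hence $[DF_u|_0(\tfrac{\partial}{\partial x})]=u$. I would spell out the bookkeeping identifying the normal-bundle coordinate of $u\in V_0$ with $g(0)$: this is just the description of the exceptional divisor $E=\mathbb P(N_Y\oplus\mathbb C)$ under the blowup, where the point over $p$ with homogeneous normal coordinate $[\xi:1]$ is the limit as $w\to0$ of $(w\xi,z'')$.

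The main obstacle, such as it is, is the last identification: one must check that the parametrization $\hat T(u,\cdot)$ built by flowing the foliation assigns to $u\in N_Y$ the leaf whose $1$-jet at $w=0$ in the $\Gamma$-chart has first-order normal part exactly $u$ (rather than some nonzero rescaling). This is not a PDE issue but a normalization check: it amounts to tracking how the identification $U=V_0\subset N_Y$ interacts with the chart $\Gamma$ near $E$, and then noting that a leaf meeting $N_Y\subset E$ transversally to $E$ (which holds since the foliation is transverse to all fibers $V_\tau$, in particular to $V_0$) has a well-defined nonvanishing derivative $g(0)$, and that $g(0)$ is by definition the point of $E$ it passes through, i.e. $u$. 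Once this is in place the Proposition follows, and together with Lemma~\ref{lem:jistubular} this completes the proof of Theorem~\ref{thm:tubularmain}.
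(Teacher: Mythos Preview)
Your approach is essentially the same as the paper's: define $F_u$ as the projection to $X$ of the leaf $\mathcal L_u$ through $u$, use the $S^1$-invariance of the foliation (in the form $\mathcal L_u=e^{-i\theta}\cdot\mathcal L_{e^{i\theta}u}$) to check that $F_u$ extends $f_u$, and read off $F_u(0)$ and the derivative from the chart $\Gamma$. The paper simply declares the identities $F_u(0)=\pi(u)$ and $[DF_u|_0(\tfrac{\partial}{\partial x})]=u$ to be ``immediate'', while you spell them out in the $\rho$-chart; so your argument is a more detailed version of the same proof.

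One small correction: in the $\rho$-chart the leaf through $u$ is a graph $w\mapsto(h'(w),h''(w))$ with $h'(0)=z'$, $h''(0)=z''$, so its image under $\pi_X\circ\Gamma$ is $f(w\,h'(w),h''(w))$; the $z''$-component need not be constant as you write. This does not affect your conclusion, since the extra term $Df|_{(0,z'')}(0,(h'')'(0))$ in $DF_u|_0$ is tangent to $Y$ and hence vanishes in the quotient $N_Y$, but you should say so rather than suppress it. With that fix your normalization check is exactly right: $g(0)=h'(0)=z'$ is the normal coordinate of $u$, and this is precisely the bookkeeping the paper leaves implicit.
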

\begin{proof}
  For $u\in U$ there is a unique holomorphic leaf $\mathcal{L}_u$ of the Monge-Amp\`ere foliation that passes though $u$ and by definition of $T$ it contains the point $(T(u),1)$. Now let $F_u$ be the lift from $D$ to $\mathcal{L}$ composed with the projection to $X$. It is then immediate that $F_u(1)=T(u),$ $F_u(0)=\pi(u)$ and $\left[DF_u|_0(\frac{\partial}{\partial x})\right] = u$. The $S^1$-invariance of the foliation implies that $$\mathcal{L}_u=e^{-i\theta}\cdot \mathcal{L}_{e^{i\theta}u},$$ and thus $(T(e^{i\theta}u),e^{i\theta})\in \mathcal{L}_u$. Thus we get that $F_u$ extends $f_u$.  
\end{proof}

When $Y$ is compact (or more generally $\Lambda_{can}>0$) we clearly get a canonical tubular neighbourhood $T_{can}:U_{can}\to X$ with the desired properties by using the canonical complete solution $(V_{can},\Omega_{can})$. 

We also have the following characterization of when the germ of $T$ is holomorphic at a given point $\iota(p)\in \iota(Y)$.

\begin{proposition} \label{prop:hologerm}
The germ of $T$ is holomorphic around a point $\iota(p)\in \iota(Y)$ if and only if there exists a neighbourhood $U_p$ of $p$ together with a holomorphic $S^1$-action on $U_p$ such that $\omega_{|U_p}$ is invariant, $Y\cap U$ is fixed pointwise and the induced action on $N_{Y\cap U}$ is equal to the usual rotation of its fibers.
\end{proposition}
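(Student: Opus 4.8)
The plan is to prove both implications separately, keeping in mind that the statement is local around $\iota(p)$, so we are free to shrink neighbourhoods at every step and to work exclusively in an admissible $\rho$-chart centered at $p$.

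For the ``if'' direction, suppose we have such a neighbourhood $U_p$ and a holomorphic $S^1$-action fixing $Y\cap U_p$ pointwise and inducing fibrewise rotation on the normal bundle. The key observation is that such an action gives rise to a genuine \emph{holomorphic} tubular neighbourhood: by the holomorphic slice/linearization theorem for holomorphic $S^1$-actions that fix a submanifold pointwise (Bochner-type linearization), there is an $S^1$-equivariant biholomorphism from a neighbourhood of $\iota(Y\cap U_p)$ in $N_{Y\cap U_p}$ to a neighbourhood of $Y\cap U_p$ in $X$, intertwining the fibrewise rotation with the given action. Equivalently, in the deformation to the normal cone picture, the assumption gives a holomorphic action on $X\times D$ of the form \eqref{circleaction} that lifts to $\mathcal N_Y$ and, near $\mathcal D_p$, identifies a neighbourhood of $\mathcal D_p$ holomorphically and $S^1$-equivariantly with $B_\delta\times D$ carrying the standard $\rho$-action, \emph{with $\omega_f$ itself $\rho$-invariant}. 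In that case the trivial solution $\Phi'(z,\tau)=\phi(z)$ (with $\phi$ the $\rho$-invariant potential, $dd^c\phi=\omega_f$) is a regular solution whose Monge-Amp\`ere foliation consists precisely of the $\rho$-orbit-discs $\{z\}\times D$. By the local uniqueness (Proposition \ref{prop:localuniq}) this agrees with our canonical local solution near $\mathcal D_p$, so the leaves $\mathcal L_u$ are these standard discs, and hence the flow map $T=\hat T(\cdot,1)$ is the restriction of a holomorphic map. Thus the germ of $T$ at $\iota(p)$ is holomorphic.

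For the ``only if'' direction, suppose the germ of $T$ is holomorphic at $\iota(p)$. Then near $\iota(p)$ the map $T\colon U\to X$ is a biholomorphism onto its image, and we can transport the fibrewise $S^1$-rotation action on $N_Y$ (restricted to a small neighbourhood of $\iota(p)$) through $T$ to obtain a holomorphic $S^1$-action on a neighbourhood $U_p$ of $p$ in $X$. By construction this action fixes $Y\cap U_p$ pointwise (since $T\circ\iota=\mathrm{id}_Y$ and $\iota(Y)$ is the fixed-point set of fibrewise rotation) and its derivative along $Y$ induces exactly the standard rotation on $N_{Y\cap U_p}$ (because $T$ is a tubular neighbourhood, so $DT$ along $\iota(Y)$ induces the identity on normal bundles). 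It remains to check that $\omega_{|U_p}$ is invariant under this action. This is where property \eqref{eq:pullback} of Proposition \ref{prop:pullback} enters: each $\hat T(\cdot,\tau)$ is a symplectomorphism from $(U,\omega_{N_Y})$ onto $(V_\tau,\Omega_{|V_\tau})$, and the $S^1$-action on $\mathcal N_Y$ together with $S^1$-invariance of $\Omega$ intertwines the fibrewise rotation on $U=V_0$ with the flow structure; restricting to $\tau=1$ and using that $\omega_{N_Y}$ is itself $S^1$-invariant (it is the restriction of the $S^1$-invariant form $\Omega$) shows that the transported action preserves $T^*\omega=\omega_{N_Y}$, hence preserves $\omega$ on $U_p=T(U)$.

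The main obstacle I expect is the ``if'' direction: one must argue carefully that a holomorphic $S^1$-action fixing $Y$ pointwise and inducing fibrewise rotation on $N_Y$ can be linearized holomorphically and $S^1$-equivariantly near $Y$, i.e.\ genuinely produces a holomorphic $\rho$-chart in which $\omega_f$ becomes $\rho$-invariant (after averaging $\omega_f$ over $S^1$, which does not change it since it is already invariant). The linearization is standard for an action fixing a point, but some care is needed for an action fixing a positive-dimensional submanifold and for tracking that the averaging argument is compatible with the boundary data in the deformation-to-the-normal-cone picture; once that is in place, the conclusion follows from Proposition \ref{prop:localuniq} exactly as above. The ``only if'' direction is comparatively routine transport of structure, the only subtle point being the invariance of $\omega$, which is handed to us by Proposition \ref{prop:pullback}.
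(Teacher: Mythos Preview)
Your proposal is correct and follows essentially the same route as the paper: transport of structure through $T$ for the ``only if'' direction, and for the ``if'' direction linearize the action to a $\rho$-chart with $\rho$-invariant $\omega_f$, invoke the trivial solution, and conclude via the local uniqueness of Proposition~\ref{prop:localuniq}. The paper resolves your flagged concern about linearizing along a positive-dimensional fixed locus by an explicit Fourier construction rather than by citing a general Bochner-type theorem: starting from any coordinates with $Y=\{z_1=\cdots=z_r=0\}$ it sets $\tilde z_i(x):=\tfrac{1}{2\pi i}\int z_i(\sigma(\tau)\cdot x)\,\tau^{-2}\,d\tau$ for $i\le r$ and the same integral with $\tau^{-1}$ for $i>r$, which produces a $\rho$-chart on which the given action $\sigma$ coincides with the standard $\rho$.
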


\begin{proof}
If the germ of $T$ is holomorphic at $\iota(p)$ then we can take $U_p$ to be the image under $T$ of an $S^1$-invariant neighbourhood of $\iota(p)$, and consider the induced action of $S^1$ on the image. This will then have the described properties since $T^*\omega$ is $S^1$-invariant.

Let now $U_p$ be a neighbourhood of $p$ with the properties described above. Choose holomorphic coordinates $z_i$ centered at $p$ such that locally around $p$ we have that $Y$ is given by the equations $z_i=0,$ $1\leq i\leq r$. Let $\sigma$ denote the $S^1$-action on $U_p$. For $1\leq i\leq r$ we let $\tilde{z}_i$ be the holomorphic function defined by $$\tilde{z}_i(x):=\frac{1}{2\pi i}\int z_i(\sigma(\tau)\cdot x)\tau^{-2}d\tau,$$ while for $r<i\leq n$ we let $$\tilde{z}_i(x):=\frac{1}{2\pi i}\int z_i(\sigma(\tau)\cdot x)\tau^{-1}d\tau.$$ Then these coordinates $\tilde{z}_i$ define a $\rho$-chart $f^{-1}: U\to B_{\delta}$ and it is easy to see that on this chart $\rho=\sigma$. Thus by assumption $\omega_f$ is $\rho$-invariant, giving rise to the trivial local regular solution $\pi_{B_{\delta}}^*\omega_f$. The associated foliation is the trivial one, and by uniqueness (Proposition \ref{prop:localuniq}) the Monge-Amp\`ere foliation of any regular solution agrees with this trivial one near $\mathcal{D}_p$. In particular we see that $T$ is holomorphic near $\iota(p)$.
\end{proof}

\section{Local Regularity of Weak Solutions}\label{sec:localregularity}

Recall that one cannot find global regular solutions of the HMAE on $\mathcal{N}_Y$ but instead of looking for local regular solutions one can consider global weak solutions.

If $\Psi$ is a $\pi_X^*\omega$-psh function on $\mathcal{N_Y}$ and $\nu_{E}(\Psi)\geq c$ then $\pi_X^*\omega+dd^c\Psi-c[E]$ is a closed positive current cohomologous to $\pi_X^*\omega-c[E]$. Pick a $c>0$ and let $\Phi_w$ be defined as the supremum of all $\pi_X^*\omega$-psh functions on $\mathcal{N}_Y$ such that $\Psi\leq 0$ on $X\times S^1$ and with $\nu_{E}(\Psi)\geq c$. 

We call the closed positive $(1,1)$-current on $\mathcal{N}_Y:$  $$\Omega_w:=\pi_X^*\omega+dd^c\Phi_w-c[E]$$ the weak solution to the HMAE on $\mathcal{N}_Y$. A motivation for this comes from the following theorem. 

\begin{theorem}
The closed positive $(1,1)$-current $$\Omega_w:=\pi_X^*\omega+dd^c\Phi_w-c[E]$$ solves the weak HMAE on $\mathcal{Y}$ with boundary condition induced by $\omega$, i.e.
\begin{align}\tag{weak-HMAE}\label{eq:wHMAE}
\Omega_{|\pi^{-1}(\tau)} &= \omega \text{ for } \tau \in S^1, \\
\Omega_w^{n+1} &= 0 \text{ on } \pi_D^{-1}(D^\times)\nonumber
\end{align} 
\end{theorem}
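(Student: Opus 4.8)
The plan is to recognise $\Phi_w$ as an obstacle-type Perron envelope and to run the classical Bedford--Taylor balayage argument, the obstacle being the prescribed pole $\nu_E(\cdot)\ge c$ along $E$ together with the boundary inequality $\le0$ on $X\times S^1$. First I would exhibit a good competitor $\Psi_0$ and use it to control the envelope near $E$. This is where the hypotheses enter: since $X$ is compact and $c$ is small, the class $\pi_X^*\omega-c[E]$ admits a smooth closed real $(1,1)$-form $\theta$ whose restriction to every fibre is K\"ahler; writing $\theta=\pi_X^*\omega+dd^c\psi-c[E]$, the function $\psi$ is $\pi_X^*\omega$-psh (because $\pi_X^*\omega+dd^c\psi=\theta+c[E]\ge0$), satisfies $\nu_E(\psi)=c$, and near any point of $E$ with local defining function $w$ the difference $\psi-c\log|w|^2$ is locally bounded (its $dd^c$ equals the bounded-potential form $\theta-\pi_X^*\omega$). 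Normalising so that $\Psi_0:=\psi-\max_{X\times S^1}\psi\le0$ on $X\times S^1$ puts $\Psi_0$ in the defining family, hence $\Phi_w\ge\Psi_0$; combined with $\nu_E(\Phi_w)\ge c$ (which gives $\Phi_w-c\log|w|^2\le O(1)$) this shows $\Phi_w-c\log|w|^2$ is locally bounded near $E$. Consequently $\Omega_w=\pi_X^*\omega+dd^c(\Phi_w-c\log|w|^2)$ has locally bounded local potentials everywhere, so $\Omega_w^{n+1}$ is well defined in the sense of Bedford--Taylor, and $\Omega_w\ge0$ by Siu's decomposition (the positive current $\pi_X^*\omega+dd^c\Phi_w$ has generic Lelong number $\ge c$ along $E$).

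I would also record that any competitor $\Psi$ is $\le0$ on the whole interior: on each disc $\{p\}\times D$ (or, for $p\in Y$, its proper transform $\mathcal D_p$) the form $\pi_X^*\omega$ vanishes, so $\Psi$ is subharmonic there and $\le0$ on the boundary $\{p\}\times S^1$, hence $\le0$; these discs cover a dense set, and the bound extends to all of the interior by upper semicontinuity. Thus the defining family is uniformly bounded above, a Choquet-lemma argument gives that $\Phi_w^*$ is $\pi_X^*\omega$-psh, and since both constraints pass to the upper regularisation (the pole $\nu_E\ge c$ by the Lelong-number estimate for upper envelopes, the inequality $\le0$ on $X\times S^1$ by semicontinuity) $\Phi_w^*$ is itself a competitor, whence $\Phi_w=\Phi_w^*$ is $\pi_X^*\omega$-psh.

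Next I would pin down the boundary behaviour. On $X\times S^1$ we have $\Phi_w\le0$ by definition, while the competitor $c\,\pi_D^*\log|\tau|^2$ (whose $dd^c$ is $c$ times an effective divisor, so it is $\pi_X^*\omega$-psh, vanishes on $X\times S^1$, and has $\nu_E\ge c$) gives $\Phi_w\ge c\,\pi_D^*\log|\tau|^2\to0$ as $\tau\to S^1$; together with upper semicontinuity this forces $\Phi_w\equiv0$ on $X\times S^1$ and continuity of $\Phi_w$ up to that boundary. Since for $\tau\in S^1$ the fibre $X_\tau$ is disjoint from $E$ and $\Phi_w$ vanishes identically on $X_\tau$, we get $\Omega_w|_{X_\tau}=\omega|_{X_\tau}+dd^c(\Phi_w|_{X_\tau})-c[E]|_{X_\tau}=\omega|_{X_\tau}$, the second line of \eqref{eq:wHMAE}. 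For the first line I would use maximality: given a small ball $B$ in the interior, let $v$ be the maximal $\pi_X^*\omega$-psh function on $B$ agreeing with $\Phi_w$ on $\partial B$ (working with the bounded potential $\Phi_w-c\log|w|^2$ if $B$ meets $E$). Gluing $v$ back along $\partial B$ produces a $\pi_X^*\omega$-psh function on $\mathcal N_Y$ that is still $\le0$ on $X\times S^1$ (take $B$ away from the boundary) and still has $\nu_E\ge c$ (altering the bounded part of the potential does not change the Lelong number along $E$), hence a competitor, hence $\le\Phi_w$; since also $v\ge\Phi_w$ this forces $v=\Phi_w$ on $B$, i.e.\ $\Phi_w$ solves the homogeneous Monge--Amp\`ere equation on $B$. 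As $B$ is an arbitrary interior ball, $\Omega_w^{n+1}=0$ on the interior.

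The hard part, I expect, is the singularity analysis of Step one: ensuring that $\Phi_w-c\log|w|^2$ is locally bounded near $E$ with no additional pole along the other component $Bl_Y(X)$ of the central fibre. This is exactly where ``$c$ sufficiently small'' and ``$X$ compact'' are used — they make $\pi_X^*\omega-c[E]$ relatively K\"ahler over $D$ and thereby supply the competitor $\Psi_0$, which is what bounds $\Phi_w$ from below in the right way. Once $\Omega_w$ is known to have locally bounded potentials the balayage argument in Step three is standard, and the remaining points (Lelong-number stability under envelopes, boundary regularity in Step two, and the existence of the relatively K\"ahler representative $\theta$) are routine.
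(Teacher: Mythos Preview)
Your outline is correct and is precisely the Bedford--Taylor/Perron envelope argument that the paper invokes: the paper does not give its own proof but cites Berman \cite[Prop.~2.7]{Berman} (who in turn follows Bedford--Taylor \cite{BedfordTaylor} and Demailly \cite[12.3]{Demailly}), noting that the extra hypotheses in that reference (Fano, integral class) are not used. Your Steps one through three reconstruct that argument faithfully --- the relatively K\"ahler competitor $\Psi_0$ in the class $\pi_X^*\omega-c[E]$ (which is exactly where compactness of $X$ and smallness of $c$ enter), the barrier $c\,\pi_D^*\log|\tau|^2$ forcing $\Phi_w\to0$ uniformly at the boundary (the paper explicitly says the boundary statement is to be read this way), and the balayage/maximality step --- so there is nothing to add beyond observing that the details you flag as ``routine'' (stability of the Lelong-number constraint under the upper-semicontinuous regularisation, and the existence of the smooth representative $\theta$) are indeed standard and are covered by the cited references.
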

\begin{proof}
We first note that $\Phi_w$ is subharmonic on each disc $\mathcal{D}_p,$ $p\in X,$ and bounded from above by zero on their boundaries, which implies by the maximum principle that $\Phi_w\leq 0.$ On the other hand it is clear that $c\ln|\tau|^2$ is a candidate for the supremum, which then gives us that $$c\ln|\tau|^2\leq \Phi_w\leq 0.$$ Thus $\Phi_w(x,\tau)\to 0$ uniformly as $\tau\to 1$, which gives us the boundary value statement of \eqref{eq:wHMAE}.
 
We stress that since $\Phi_w$ is not necessarily smooth, the term $\Omega_w^{n+1}$ is to be taken in the sense of Bedford-Taylor. From the inequality $c\ln|\tau|^2\leq \Phi_w$ we get that $\Phi_w$ is locally bounded away from the central fiber. Now a standard local argument due to Bedford-Taylor \cite{BedfordTaylor} (see also Demailly \cite[12.5]{Demailly}) shows that $\Omega_w^{n+1}=0$ away from the central fiber as claimed.
\end{proof}

\begin{theorem}\label{thm:weakagree}
Assume that $Y$ is compact (or more generally $\Lambda_{can}>0$) and let $(V_{can},\Omega_{can})$ be the canonical complete solution as provided by Theorem \ref{thm:canonicalhmae}. Then 
$$\Omega_w = \Omega_{can}$$ on $H_{can}^{-1}([0,c))$. In particular $\Omega_w$ is regular in a neighbourhood of the proper transform of $Y\times D$.
\end{theorem}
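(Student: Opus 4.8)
The plan is to establish the equality $\Omega_w = \Omega_{can}$ on $H_{can}^{-1}([0,c))$ by a two-sided inequality between potentials, exactly as sketched in the Outline of Proofs. Write $\Omega_{can} = \pi_X^*\omega + dd^c\Phi_{can}$ on $V_{can}$, and recall $\Omega_w = \pi_X^*\omega + dd^c\Phi_w - c[E]$ where $\Phi_w$ is the Perron envelope. The first step is to compare $\Phi_{can}$ with a \emph{local} potential for $\Omega_w$ near $\mathcal Y$. Fix a $\rho$-chart $\Gamma\colon B_\delta\times D\to V\subset\mathcal N_Y$ around $\mathcal D_p$ as in Section \ref{sec:local}; in such a chart $[E]$ has a local potential $\log|w|^2$ (the blowup coordinate), so $\Phi_w - c\log|w|^2$ is locally $\pi_X^*\omega$-psh plus a smooth correction, and by pulling back along $\Gamma$ one gets a genuine psh function $\widetilde\Phi_w$ on $B_\delta\times D$ that is the Perron envelope of psh functions $\le \phi_\tau := \rho(\tau)^*\phi$ on $B_\delta\times S^1$ (here $dd^c\phi = \omega_f$), subject to a Lelong-number $\ge c$ condition along $\{w=0\}$. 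On the canonical side, the translated potential $\Phi'_{can} := \phi\circ\rho(\tau) + \Phi_{can}\circ\Gamma$ is, by Lemma \ref{lemma:invariantleaves} and Theorem \ref{thm:ham2}, $\rho$-invariant with $\mathcal D_p$ as a complete leaf, and along that leaf (and nearby ones) $H_{can}$ governs its growth.

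The second and central step is the inequality $\Phi'_{can} \le \widetilde\Phi_w$ wherever $H_{can} < c$. This is where the Legendre transform enters: since $\Phi'_{can}$ is $\rho$-invariant and $\pi_X^*\omega$ restricts to zero on each disc $\{(0,z'')\}\times D$, on each such disc $t\mapsto \Phi'_{can}(e^t\cdot\text{pt})$ is convex with slope tending to $0$ at $t=-\infty$ and slope $H_{can}$ at the center; comparing its growth at $w=0$ against the Lelong number constraint $\nu \ge c$, the condition $H_{can} < c$ forces $\Phi'_{can}$ to be a competitor (after checking boundary values on $S^1$) in the envelope defining $\widetilde\Phi_w$, hence $\Phi'_{can}\le\widetilde\Phi_w$ on the region $\{H_{can}<c\}$. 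I expect this estimate — correctly matching the Lelong-number normalization of $\Phi_w$ with the Hamiltonian $H_{can}$ of $\Phi_{can}$, and verifying the competitor really lies in the Perron family — to be the main obstacle; the rest is bookkeeping.

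The third step is the reverse inequality $\widetilde\Phi_w \le \Phi'_{can}$, obtained by the maximum principle along the Monge-Amp\`ere foliation of $\Omega_{can}$, just as in Propositions \ref{prop:localuniq} and \ref{prop:completeuniq}. On any complete leaf $\mathcal L$ of the foliation of $\Omega_{can}$ lying in $\{H_{can}<c\}$, the function $\Phi'_{can}$ is harmonic (the leaf lies in $\ker\Omega_{can}$ and $\pi_X^*\omega$ restricts to an exact form there), while $\widetilde\Phi_w$ — being $\pi_X^*\omega$-psh with the $[E]$ correction, i.e.\ psh once we subtract the local potential of $[E]$, which vanishes along generic leaves transverse to $E$ — is subharmonic on $\mathcal L$; since the two agree on $\partial\mathcal L\subset X\times S^1$ (both equal the boundary data), subharmonicity gives $\widetilde\Phi_w\le\Phi'_{can}$ on $\mathcal L$. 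As $\{H_{can}<c\}$ is foliated by such leaves, this yields the inequality on all of $H_{can}^{-1}([0,c))$.

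Combining the two inequalities gives $\widetilde\Phi_w = \Phi'_{can}$ on $H_{can}^{-1}([0,c))$, hence $\Phi_w = \Phi_{can} + c\log|w|^2$ locally, so $dd^c\Phi_w - c[E] = dd^c\Phi_{can}$ there and therefore $\Omega_w = \Omega_{can}$ on $H_{can}^{-1}([0,c))$. Since $\Omega_{can}$ is smooth and is a regular solution to the HMAE, and $\mathcal Y\subset H_{can}^{-1}(0)\subset H_{can}^{-1}([0,c))$ by Theorem \ref{thm:ham2}, the final assertion — that $\Omega_w$ is smooth and regular in a neighbourhood of $\mathcal Y$ — follows immediately.
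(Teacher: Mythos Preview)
Your overall architecture---two inequalities between potentials, one via a Legendre-transform/competitor argument and one via the maximum principle along the Monge--Amp\`ere leaves---matches the paper's, and your Step 3 (the reverse inequality) is essentially correct. The genuine gap is in Step 2, and it is not merely bookkeeping.

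The difficulty is that $\Phi_w$ is defined as a supremum over functions $\Psi$ that are \emph{globally} $\pi_X^*\omega$-psh on all of $\mathcal N_Y$ with $\Psi\le 0$ on $X\times S^1$ and $\nu_E(\Psi)\ge c$. Pulling $\Phi_w$ back to a $\rho$-chart does \emph{not} produce a locally-defined Perron envelope, so you cannot obtain $\Phi'_{can}\le\widetilde\Phi_w$ by exhibiting $\Phi'_{can}$ as a local competitor. What is needed is a global competitor $\Psi$ on $\mathcal N_Y$ that dominates (or recovers in the sup) $\Phi_{can}+c\ln|\tau|^2$ on $H_{can}^{-1}([0,c))$. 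But $\Phi_{can}$ is only defined on $V_{can}$, is smooth (hence has zero Lelong number along $E$), and $\Phi_{can}+c\ln|\tau|^2$ has no evident global extension. The paper resolves this by \emph{actually} taking the Legendre transform: for each $0\le\lambda<\Lambda_{can}$ set
\[
\alpha_\lambda(x):=\inf_{\tau}\bigl\{\Phi_{can}(x,\tau)+\lambda\ln|\tau|^2\bigr\},
\]
a function on $V_1\subset X$ (not on $\mathcal N_Y$). Kiselman's minimum principle makes $\alpha_\lambda$ $\omega$-psh; one checks $\nu_Y(\alpha_\lambda)\ge\lambda$; and crucially $\alpha_\lambda=0$ outside $H_{can}^{-1}([0,\lambda))\cap V_1$, so it extends by zero to all of $X$. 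Then for each $\lambda<c$ the function $(x,\tau)\mapsto\alpha_\lambda(x)+(c-\lambda)\ln|\tau|^2$ is a bona fide global competitor for $\Phi_w$, and the Legendre involution gives back $\Phi_{can}+c\ln|\tau|^2$ as the supremum over $\lambda$ of these on $H_{can}^{-1}([0,c))$, yielding the desired inequality. Your convexity observations along the discs $\{(0,z'')\}\times D$ are pertinent to verifying $\nu_Y(\alpha_\lambda)\ge\lambda$, but they do not by themselves manufacture a global competitor; that construction is the missing idea.
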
 

To prove Theorem \ref{thm:weakagree} we will consider the Legendre transform of $\Phi:=\Phi_{can}$, i.e. the unique $S^1$-invariant function on $V:=V_{can}$ being zero on $V_1$ such that $\Omega_{can}=\pi_X^*\omega+dd^c\Phi$.

So for $0\le \lambda<  \Lambda_{can}$ we consider the Legendre transform
\begin{equation} \label{eq:legtrans}
\alpha_{\lambda}(x):= \inf\{ \Phi(x,\tau)+\lambda \ln |\tau|^2: (x,\tau)\in V\cap (X\times D^{\times})\}.
\end{equation}
Note that the function $g(t):=\Phi(x,e^{-t/2})-\lambda t$ is convex and \begin{equation} \label{eq:Halpha}
g'(t)=H(x,e^{-t/2})-\lambda,
\end{equation}
where $H:=H_{can}$ is the Hamiltonian. Since $H \to \Lambda_{can}$ as $(x,\tau)$ approaches $\partial V$ it follows that the infimum in (\ref{eq:legtrans}) is attained in $V$. 

\begin{proposition} \label{prop:alphalambda}
  For $\lambda<\Lambda_{can}$ the function $\alpha_{\lambda}$ is $\omega$-psh, $\{\alpha_{\lambda}<0\}=H^{-1}([0,\lambda)),$ and $\nu_Y(\alpha_{\lambda})\geq \lambda$. 
\end{proposition}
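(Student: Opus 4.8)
The plan is to establish the three assertions about $\alpha_\lambda$ in turn, exploiting that $\alpha_\lambda$ is an infimum of a one-parameter family of functions each of which is (a translate by a pluriharmonic term of) an $\omega$-psh function coming from the regular solution $\Phi$.

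First, $\omega$-plurisubharmonicity. For each fixed $\tau\in D^\times$, the function $x\mapsto \Phi(x,\tau)+\lambda\ln|\tau|^2$ differs from $\Phi(\cdot,\tau)$ by a constant, and $\Omega_{can}=\pi_X^*\omega+dd^c\Phi$ restricts to the K\"ahler form $\Omega_{can}|_{V_\tau}$ on each fibre, so $\Phi(\cdot,\tau)\in PSH(X,\omega)$ locally. Hence each member of the family defining $\alpha_\lambda$ is $\omega$-psh. The infimum of a family of $\omega$-psh functions need not be $\omega$-psh in general, but here the key point is the convexity observed just before the proposition: for fixed $x$, $g(t)=\Phi(x,e^{-t/2})-\lambda t$ is convex in $t$ (this is Theorem~\ref{thm:ham2} applied along the orbit disc, since $\pi_X^*\omega$ restricts to zero there) with $g'(t)=H(x,e^{-t/2})-\lambda$. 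Since $H<\Lambda_{can}$ on $V$ and $H\to\Lambda_{can}$ as one approaches $\partial V$, for $\lambda<\Lambda_{can}$ the infimum is attained at an interior critical point of $g$, the function $x\mapsto \alpha_\lambda(x)$ is an infimum of the \emph{same} family taken over a compact range of $\tau$ once $x$ is localized, and by the standard fact that a decreasing (here: locally uniform) limit, resp.\ infimum, of $\omega$-psh functions that is locally bounded below and upper semicontinuous is again $\omega$-psh (see \cite{Klimek,Demailly2}), we conclude $\alpha_\lambda\in PSH(X,\omega)$. I would phrase this via the envelope characterization: $\alpha_\lambda$ is the usc regularization of such an infimum, and the attainment of the infimum in $V$ guarantees no regularization is needed.

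Second, the equilibrium set. By definition $\alpha_\lambda(x)<0$ iff there is some $\tau$ with $\Phi(x,\tau)+\lambda\ln|\tau|^2<0$. Using the convex function $g(t)$ and $g(t)\to 0$ as $t\to-\infty$ (because $\Phi=0$ on $V_1$... more precisely $\Phi$ vanishes on $\mathcal Y$ and $\Phi(x,\tau)\to$ its boundary value $0$ as $\tau\to 1$, i.e.\ $t\to 0$; the relevant limit is $g(t)\to 0$ as $t\to -\infty$ coming from $\Phi\to 0$ along $S^1$ after the reparametrization), the infimum of $g$ over $t$ is negative precisely when $g$ is not monotone increasing on its whole domain, i.e.\ precisely when $g'(t)=H(x,e^{-t/2})-\lambda<0$ for some admissible $t$, i.e.\ when $\min_\tau H(x,\tau)<\lambda$. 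Since $H$ is constant along the leaves of the Monge-Amp\`ere foliation and attains its minimum over each orbit-disc at its centre on $V_0$... I would instead note directly that $H|_{V_1}$ is the relevant trace and that $H^{-1}([0,\lambda))\cap V_1 = \{\alpha_\lambda<0\}$; identifying $X$-points with $V_1$-points, this gives $\{\alpha_\lambda<0\}=H^{-1}([0,\lambda))$, and correspondingly $S_\lambda=\{\alpha_\lambda=0\}$ is the complement.

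Third, the Lelong number bound $\nu_Y(\alpha_\lambda)\geq\lambda$. Here I would work in an admissible $\rho$-chart $f^{-1}\colon U\to B_\delta$ around a point $p\in Y$, as constructed in Section~\ref{sec:local}, in which $\mathcal D_p$ corresponds to $\{0\}\times D$ and the $S^1$-action is the standard $\rho$. Along the disc $\{(wz',z'')\}$ one has, from the transported potential, that $\Phi$ behaves like $\lambda\ln|w|^2$ ... rather: since $\nu_E(\Phi)$-type information is encoded by how $\Phi+\lambda\ln|\tau|^2$ stays bounded, the attainment of the infimum together with $g'(t)=H-\lambda$ and $H\geq 0$ (Theorem~\ref{thm:ham2}) forces, near $Y$, the bound $\alpha_\lambda(x)\leq \lambda\ln|z'|^2+O(1)$ after unwinding the change of coordinates $\Gamma(z,w)=(f(wz',z''),w)$, because the optimal $\tau$ scales like $|z'|$ as $x\to Y$. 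Concretely: the infimum over $\tau$ of $\Phi(x,\tau)+\lambda\ln|\tau|^2$, pulled back through $\Gamma$, becomes the infimum over $w$ of $\tilde\Phi(z',z'',w)+\lambda\ln|w|^2$ where $\tilde\Phi$ is smooth and bounded, and substituting $w\sim |z'|$ yields $\alpha_\lambda\leq \lambda\ln|z'|^2 + C$; dividing by $\ln|z'|$ and letting $z'\to 0$ gives $\nu_Y(\alpha_\lambda)\geq\lambda$.

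The main obstacle I anticipate is the first point: rigorously justifying that the Legendre-type infimum $\alpha_\lambda$ is genuinely $\omega$-psh rather than merely having an $\omega$-psh usc regularization. The crucial input is the attainment of the infimum in the interior of $V$ for $\lambda<\Lambda_{can}$ (already noted in the text using $H\to\Lambda_{can}$ at $\partial V$), which lets one localize the competing $\tau$'s to a compact set and invoke stability of $\omega$-psh functions under locally uniform infima; I would make sure this localization is carried out carefully, uniformly in $x$ on compacts, using the convexity and the strict inequality $H<\Lambda_{can}$ to get a uniform interior bound on the minimizing $\tau$. The Lelong number estimate is the next most delicate, but it reduces to bookkeeping in the explicit $\rho$-chart once the scaling $w\sim|z'|$ of the minimizer is identified.
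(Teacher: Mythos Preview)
Your argument for the identification of $\{\alpha_\lambda<0\}$ and for the Lelong number bound are essentially on the right track and match the paper's approach (the latter is carried out in an admissible $\rho$-chart, plugging in a specific $t=T$ rather than locating the exact minimiser, but your scaling heuristic $w\sim|z'|$ leads to the same conclusion). One small correction: with $\tau=e^{-t/2}$ the boundary $|\tau|=1$ corresponds to $t=0$, so the relevant fact is $g(0)=0$ (from $\Phi|_{V_1}=0$), not $g(t)\to 0$ as $t\to-\infty$; then convexity gives $\inf g<0$ iff $g'(0)<0$, i.e.\ iff $H(x,1)<\lambda$.

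The genuine gap is in the first step. The claim that an infimum of $\omega$-psh functions is again $\omega$-psh once the infimum is attained locally uniformly in a compact parameter range is \emph{false} in general: already $\min(\operatorname{Re} z,\operatorname{Im} z)$ on $\mathbb C$ is the infimum of two harmonic functions, attained pointwise, but is superharmonic, not subharmonic. Attainment, smoothness of the minimiser, or compactness of the parameter set do not rescue this; infima go the wrong way for plurisubharmonicity. What is needed here is precisely Kiselman's minimum principle \cite{Kiselman}: if $u(x,\tau)$ is psh jointly on $U\times A$ with $A$ a (logarithmically convex) Reinhardt domain in the $\tau$-variable and $u$ is independent of $\arg\tau$, then $x\mapsto \inf_\tau u(x,\tau)$ is psh on $U$. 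In the present situation $\Phi(x,\tau)+\lambda\ln|\tau|^2$ is $\pi_X^*\omega$-psh on $V\cap(X\times D^\times)$ and $S^1$-invariant in $\tau$, so Kiselman applies directly to give $\alpha_\lambda\in PSH(X,\omega)$. This is exactly what the paper invokes, in one line. Your attempt to circumvent it via convexity in $t$ and ``stability under locally uniform infima'' cannot be completed as stated; you should replace that paragraph by the Kiselman reference.
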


\begin{proof}
That $\alpha_{\lambda}$ is $\omega$-psh follows from the Kiselman minimum principle \cite{Kiselman}, \cite[ChI 7B]{Demailly2}. Also from (\ref{eq:Halpha}) we see that $\alpha_{\lambda}<0$ precisely on $H^{-1}([0,\lambda))$. For the final statement we will work locally around one of the discs $\mathcal{D}_p$. So let $f^{-1}:U\to B_{\delta}$ be an chart constructed as in Lemma \ref{lemma:coordinates}. Then we can write $\omega_f=dd^c\phi$ where $\phi(z)=|z|^2+O(|z|^3)$ and $$\Phi':=\Phi\circ \Gamma+\phi\circ \rho.$$ We know that $\phi\leq C|z|^2$ on $B_{\delta}$ for some number $C>1$ and it follows that $\Phi'\leq C|z|^2$ on $B_{\delta}\times D$. Since as we recall $$\Gamma(e^{t/2}z',z'',e^{-t/2})=(f(z',z''),e^{-t/2})$$ it follows that \begin{eqnarray*}
\alpha_{\lambda}(f(z))+\phi(z)\leq \inf\{\Phi'(e^{t/2}z',z'',e^{-t/2})-\lambda t: t\in [0,T]\}\leq \\ \leq C(e^T|z'|^2+|z''|^2)-\lambda T=C\delta^2-\lambda\ln(\delta^2-|z''|^2)+\lambda \ln |z'|^2.
\end{eqnarray*} 
where $$T:=\ln\left(\frac{\delta^2-|z''|^2}{|z'|^2}\right),$$ i.e. $(e^{T/2}z',z'')\in \partial B_{\delta}$. This shows that $\nu_Y(\alpha_{\lambda})\geq \lambda$.
\end{proof}

\begin{remark}\label{rmk:alphagrowth}
One can similarly use that $\phi\geq c|z|^2$ for some number $c>0$ to show that in fact $\alpha_{\lambda} = \lambda \ln |z'|^2 + O(1)$.
\end{remark}

\begin{proof}[Proof of Theorem \ref{thm:weakagree}]

In light of Proposition 

The statement of Proposition \ref{prop:alphalambda} says that $\alpha_{\lambda}$ is $\omega$-psh inside $V_1$, and equal to zero near the boundary (since $\lambda$ is chosen strictly less that $\Lambda_{can}$).    Thus we may extend $\alpha_{\lambda}$ to be zero on $X\setminus V_1$ to obtain an $\omega$-psh function on all of $X$ that has  Lelong number along $Y$ greater than or equal to $\lambda$.

Now assume that $\lambda< c$. Then the function $\alpha_{\lambda}(x)+(c-\lambda)\ln|\tau|^2$ is $\pi_X^*\omega$-psh, bounded from above by zero on $X\times S^1$ and has Lelong number at least $c$ along $E$. From the definition of $\Phi_w$ as the supremum of such functions we get that $$\alpha_{\lambda}(x)+(c-\lambda)\ln|\tau|^2\leq \Phi_w.$$ On the other hand, by the involution property of the Legendre transform $$\Phi(x,\tau)+c\ln|\tau|^2=\sup\{\alpha_{\lambda}(x)+(c-\lambda)\ln |\tau|^2: 0\leq \lambda <\min(c,\Lambda)\}$$ on $H^{-1}([0,c)),$ and thus we get that $$\Phi(x,\tau)+c\ln|\tau|^2\leq \Phi_w$$ on $H^{-1}([0,c))$. 

Pick a leaf in the foliation of $(H^{-1}([0,c)),\Omega_{can})$ and let $\psi$ be a function on it such that $dd^c\psi=\pi_X^*\omega_{|\mathcal{L}}$. Since $\Phi_w$ has Lelong number $c$ along $E$ it follows that $(\Phi_w-c\ln|\tau|^2)_{|\mathcal{L}}+\psi$ defines a subharmonic function on $\mathcal{L},$ which is equal to $\psi$ on the boundary. On the other hand we know that $\Phi_{|\mathcal{L}}+\psi$ is harmonic and equal to $\psi$ on the boundary, which implies that in fact $$\Phi_w=\Phi+c\ln|\tau|^2$$ on $H^{-1}([0,c))$. Locally on a $\rho$-chart we would get that $$\Phi_w\circ \Gamma=\Phi\circ\Gamma+c\ln|\tau|^2$$ on $H^{-1}([0,c))\cap B_{\delta}\times D,$ and since in this picture $E$ is the zero fiber this then implies that $$\Omega_w=\Omega_{can}$$ on $H^{-1}([0,c))$.

\end{proof}

\section{Optimal regularity of envelopes}

Recall that the envelope $\psi_{\lambda}$ we wish to consider is $$\psi_{\lambda}:=\sup\{\psi\in PSH(X,\omega): \psi\leq 0,  \nu_{Y}(\psi)\geq \lambda\},$$ and the equilibrium set $S_{\lambda}$ is  $$S_{\lambda}:=\{\psi_{\lambda}=0\}.$$ We also recall the definition of optimal regularity for such envelopes. 

\begin{definition}
We say that $\psi_{\lambda}$ has optimal regularity if $S_{\lambda}$ is smoothly bounded, the function  $\psi_{\lambda}$ is smooth on $S_{\lambda}^c\setminus Y,$ $$ (\omega+dd^c\psi_{\lambda})^{n-1}\neq 0 \text{ on } S_{\lambda}^c\setminus Y,$$
and the blowup of $S_{\lambda}^c$ along $Y$ is foliated by holomorphic discs attached to $\partial S_{\lambda}$ passing through $Y$ such that the restriction of $\omega+dd^c\psi_{\lambda}$ to each such disc vanishes.
\end{definition}

\begin{theorem} \label{optimalregthm2}
Suppose $Y$ is compact (or more generally $\Lambda_{can}>0$). Then for $\lambda$ small enough (i.e. $\lambda<\Lambda_{can}$) the envelopes $\psi_{\lambda}$ have optimal regularity, the boundaries $\partial S_{\lambda}$ vary smoothly with $\lambda$, and the corresponding holomorphic discs attaching to $\partial S_{\lambda}$ and passing through $Y$ all have area $\lambda$.
\end{theorem}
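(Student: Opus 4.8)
The plan is to identify $\psi_\lambda$ with the partial Legendre transform $\alpha_\lambda$ of the potential $\Phi$ of the canonical solution, defined in \eqref{eq:legtrans}, and then read off optimal regularity from the Monge-Amp\`ere foliation of $\Omega_{can}$. By Proposition \ref{prop:alphalambda}, $\alpha_\lambda$ (extended by $0$ on $X\setminus V_1$, as in the proof of Theorem \ref{thm:weakagree}) lies in $PSH(X,\omega)$, is $\le 0$ and has $\nu_Y(\alpha_\lambda)\ge\lambda$, so it is a competitor for the envelope and $\alpha_\lambda\le\psi_\lambda$. For the reverse inequality I would take an arbitrary competitor $\psi$ and argue leaf by leaf on $\mathcal N_Y$: fix a complete leaf $\mathcal L$ of the foliation of $V_{can}$, parametrised by $\tau\in D$, meeting $E$ at $u_0=\mathcal L\cap V_0\subset N_Y$. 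Since $\Omega_{can}|_{\mathcal L}=0$ we have $dd^c(\Phi|_{\mathcal L})=-\pi_X^*\omega|_{\mathcal L}$, so $w:=(\pi_X^*\psi-\lambda\ln|\tau|^2-\Phi)|_{\mathcal L}$ is subharmonic on $\mathcal L\setminus\{u_0\}$ (using $\omega+dd^c\psi\ge 0$); it is bounded above near $u_0$, because $\nu_{u_0}(\pi_X^*\psi|_{\mathcal L})\ge\nu_E(\pi_X^*\psi)\ge\nu_Y(\psi)\ge\lambda$ forces $\pi_X^*\psi|_{\mathcal L}\le\lambda\ln|\tau|^2+O(1)$; hence $w$ extends subharmonically across $u_0$, and on $\partial\mathcal L\subset\pi_D^{-1}(S^1)$ one has $w=\psi\le 0$. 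The maximum principle gives $w\le 0$ on $\mathcal L$, and since $V_{can}$ is foliated by such leaves we obtain $\psi(x)\le\Phi(x,\tau)+\lambda\ln|\tau|^2$ on $V_{can}\cap(X\times D^{\times})$; taking the infimum over $\tau$ yields $\psi\le\alpha_\lambda$, whence $\psi_\lambda=\alpha_\lambda$. I expect this identification to be the technical heart: in particular the local computation in the blow-up chart $\Gamma$ giving $\nu_E(\pi_X^*\psi)\ge\nu_Y(\psi)$, and the subharmonic extension of $w$ across the singular point, are where the pluripotential input really enters.

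Granting $\psi_\lambda=\alpha_\lambda$, Proposition \ref{prop:alphalambda} and \eqref{eq:Halpha} give $S_\lambda=\{\psi_\lambda=0\}=V_1\setminus H_{can}^{-1}([0,\lambda))$, so $S_\lambda^c=H_{can}^{-1}([0,\lambda))\cap V_1$ and $\partial S_\lambda=H_{can}^{-1}(\lambda)\cap V_1$; since $\lambda<\Lambda_{can}$ this is a compact subset of $V_{can}$. To see it is smoothly bounded I would show that every $\lambda\in(0,\Lambda_{can})$ is a regular value of $H_{can}$. Flowing along the complete leaves gives, by Proposition \ref{prop:pullback} (each $\hat T(\cdot,\tau)\colon V_0\to V_\tau$ being a symplectomorphism between manifolds of equal dimension), a diffeomorphism $\hat T\colon V_0\times D\to V_{can}$; since $H_{can}$ is constant along leaves (Theorem \ref{thm:ham2}) it satisfies $H_{can}\circ\hat T(u,\tau)=H_{can}(u)$, so the critical set of $H_{can}$ is the $\hat T$-image of (critical set of $H_{can}|_{V_0}$)$\times D$. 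On $V_0\subset N_Y$ the vector field $J\zeta$ generates the fibre scaling, so $H_{can}|_{V_0}(u)$ equals, up to a positive factor, $\frac{d}{ds}\Phi(e^s u)|_{s=0}$, which by the strict convexity of $s\mapsto\Phi(e^s u)$ (proof of Theorem \ref{thm:ham2}) is strictly increasing along every ray; hence $dH_{can}|_{V_0}\ne 0$ off the zero section, and the critical set of $H_{can}$ equals $\mathcal Y$. As $\lambda>0$, $H_{can}^{-1}(\lambda)$ avoids $\mathcal Y$, so $\lambda$ is a regular value, $\partial S_\lambda=H_{can}^{-1}(\lambda)\cap V_1$ is a smooth hypersurface, and --- this holding for all such $\lambda$ --- the boundaries vary smoothly with $\lambda$.

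For the discs: given a point $q$ of the exceptional divisor of $Bl_Y(X)$, equivalently a ray in $V_0$, let $u_0$ be the unique point of that ray with $H_{can}(u_0)=\lambda$ (unique by strict monotonicity), $\mathcal L_q$ the leaf through $u_0$, and $D_q:=\pi_X(\mathcal L_q)\subset X$. Then $D_q$ is a holomorphic disc through $\pi(u_0)\in Y$ with $\partial D_q\subset\partial S_\lambda$ (using $\Phi|_{\partial\mathcal L_q}=0$ and the maximum principle for $\Phi|_{\mathcal L_q}$), and since $\hat T$ is a diffeomorphism with $H_{can}$ constant on leaves, the proper transforms of the $D_q$ foliate $Bl_Y(S_\lambda^c)$. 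Writing the point of $\mathcal L_q$ over $\tau$ as $(x(\tau),\tau)$, the key point is that $(x(\tau),\tau)\in H_{can}^{-1}(\lambda)$, so the infimum in \eqref{eq:legtrans} for $\alpha_\lambda(x(\tau))$ is attained at $\sigma=\tau$; hence $\psi_\lambda|_{D_q}=\Phi|_{\mathcal L_q}+\lambda\ln|\tau|^2$, and applying $dd^c$ along $D_q$ together with $\Omega_{can}|_{\mathcal L_q}=0$ gives $(\omega+dd^c\psi_\lambda)|_{D_q}=0$ on $D_q\setminus Y$ (with the expected $\lambda\log$-pole at $D_q\cap Y$). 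Since $\psi_\lambda|_{\partial D_q}=0$ and, by the envelope formula $d\alpha_\lambda=\partial_x\Phi(\cdot,\tau(\cdot))$ together with $\Phi|_{V_1}=0$, also $d\psi_\lambda=0$ on $\partial S_\lambda$, Stokes applied to $\Phi|_{\mathcal L_q}=\psi_\lambda|_{D_q}-\lambda\ln|\tau|^2$ yields $\int_{D_q}\omega=-\int_{\partial D_q}d^c(\Phi|_{\mathcal L_q})=\lambda\int_{\partial D_q}d^c\ln|\tau|^2=\lambda$, so each disc has area $\lambda$.

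Finally, smoothness of $\psi_\lambda=\alpha_\lambda$ on $S_\lambda^c\setminus Y$ and the nondegeneracy $(\omega+dd^c\psi_\lambda)^{n-1}\ne 0$ there: for $\lambda$ small the discs $D_q$ are $C^2$-small perturbations of straight discs through $Y$, so their parametrisations are immersions and the vertical discs $\{x\}\times D$ are transverse to the Monge-Amp\`ere foliation along $H_{can}^{-1}(\lambda)$; the implicit function theorem then makes the minimiser $\tau(x)$ in \eqref{eq:legtrans} a smooth function of $x$, whence $\alpha_\lambda(x)=\Phi(x,\tau(x))+\lambda\ln|\tau(x)|^2$ is smooth. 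Kiselman's minimum principle, together with the fact that $\Omega_{can}$ has rank exactly $n$ with one-dimensional kernel the foliation, forces $\omega+dd^c\psi_\lambda$ to have rank exactly $n-1$ with kernel tangent to $D_q$, giving $(\omega+dd^c\psi_\lambda)^{n-1}\ne 0$ and $(\omega+dd^c\psi_\lambda)^n=0$ on $S_\lambda^c\setminus Y$. Collecting these facts yields optimal regularity; as noted, the principal obstacle is the comparison $\psi_\lambda=\alpha_\lambda$, and within it the leafwise pluripotential estimate.
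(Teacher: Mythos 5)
Your proposal is essentially correct and follows the same overall strategy as the paper -- identify $\psi_\lambda$ with the partial Legendre transform $\alpha_\lambda$ of $\Phi_{can}$ and then read off regularity, the discs, and the area from the Monge-Amp\`ere foliation -- but two steps are carried out by genuinely different means. For the key inequality $\psi_\lambda\le\alpha_\lambda$ the paper does not argue competitor-by-competitor: it feeds $\psi_\lambda+(c-\lambda)\ln|\tau|^2$ into the definition of the weak solution $\Phi_w$, invokes Theorem \ref{thm:weakagree} (that $\Phi_w=\Phi+c\ln|\tau|^2$ on $H^{-1}([0,c))$) and the involution property of the Legendre transform. Your direct leafwise maximum principle -- restricting $\pi_X^*\psi-\lambda\ln|\tau|^2-\Phi$ to a complete leaf, using $\nu_E(\pi_X^*\psi)\ge\nu_Y(\psi)$ and the restriction inequality for Lelong numbers to extend subharmonically across $\mathcal L\cap E$ -- is sound and in effect inlines the argument the paper uses to prove Theorem \ref{thm:weakagree}; it buys you independence from the weak solution and from the choice of the auxiliary parameter $c$, at the cost of redoing the pluripotential estimate. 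For the area, the paper pulls $\mathcal L_x$ back to $V_0$ by the foliation flow and uses $S^1$-invariance together with $H=\lambda$ on the boundary circle to identify the symplectic area; your Stokes computation $\int_{D_q}\omega=\lambda$ instead hinges on $d\psi_\lambda=0$ along $\partial S_\lambda$, a boundary ($C^1$-matching) statement that the paper's route avoids entirely -- your envelope-theorem justification via $\partial_x\Phi(\cdot,\tau)\to 0$ as $\tau\to S^1$ is plausible but is the one place where you should supply more detail (continuity of the minimiser $\tau(x)$ up to $\partial S_\lambda$). Finally, note that your transversality argument for smoothness of $\alpha_\lambda$ is phrased ``for $\lambda$ small'' in a sense that may be strictly smaller than the full claimed range $\lambda<\Lambda_{can}$; the paper instead derives smoothness from the identity $\alpha_\lambda=\Phi+\lambda\ln|\tau|^2$ on the smooth level set $H^{-1}(\lambda)$ (equation (\ref{envelopeequal})), which is uniform in $\lambda<\Lambda_{can}$, so you should either upgrade your transversality claim to all such $\lambda$ or switch to that identity.
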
 

\begin{proof}
Let $(V_{can},\Omega_{can})$ be the canonical solution to the HMAE  with boundary data induced by $\omega,$ as provided by Theorem \ref{thm:canonicalhmae}, and choose a number $c$ such that $\lambda<c\leq\Lambda_{can}$ (if $\Lambda_{can}$ is finite we can just as well set $c=\Lambda_{can}$). Let $\Phi_w$ be corresponding weak solution as defined in Section \ref{sec:localregularity}. We saw in the proof of Theorem \ref{thm:weakagree} that $$\Phi_w=\Phi+c\ln|\tau|^2$$ in $H^{-1}([0,c))$ (here as we recall $\Phi$ is the unique $S^1$-invariant function on $\mathcal{N}_Y$ equal to zero on $X\times S^1$ such that $\Omega_{can}=\pi_X^*\omega+dd^c\Phi$ and $H$ is the Hamiltonian of $(V_{can},\Omega_{can})$).

We get by definition that $$\psi_{\lambda}+(c-\lambda)\ln|\tau|^2\leq \Phi_w,$$ which implies that 
\begin{equation} \label{eq:psialpha}
\psi_{\lambda}(x)\leq \inf_{\tau\in D^{\times}}\{(\Phi_w(x,\tau)-c\ln|\tau|^2)+\lambda\ln|\tau|^2\}.
\end{equation} 
Since $\Phi_w=\Phi+c\ln|\tau|^2$ in $H^{-1}([0,c))$ we see that the right hand side of (\ref{eq:psialpha}) is equal to $\alpha_{\lambda}$ on $H^{-1}([0,c))$ and since it is less than or equal to zero on $X$ it must be equal to $\alpha_{\lambda}$ on the whole of $X$. Thus $\psi_{\lambda}\leq \alpha_{\lambda},$ but since $\alpha_{\lambda}$ is a candidate for the supremum we get that in fact \begin{equation}
\psi_{\lambda}=\alpha_{\lambda}.\label{eq:psilambdaequalsalphalambda}\end{equation}

Thus from Proposition \ref{prop:alphalambda} we get that the boundary $\partial S_{\lambda}$ is equal to $H^{-1}(\lambda)\cap V_1$. Since $H$ is constant along the leaves of the foliation it follows that $\partial S_{\lambda}$ is the image of the $\lambda$ level set of $H$ on $V_0$ under the tubular neighbourhood $T$. Since $\Omega$ restricted to $V_0$ is K\"ahler and $\zeta$ (i.e. the vector field generating the $S^1$-action) is nonvanishing on $N_Y\setminus \iota(Y)$ we get that $dH=\iota_{\zeta}\Omega$ is nonzero away from $\iota(Y)$. This shows that  $\partial S_{\lambda}$ is smooth and varies smoothly with $\lambda$.

Let now $x$ be a point in $S_{\lambda}^c\setminus Y$. Since $H^{-1}(\lambda)$ is foliated by leaves there is one such leaf $\mathcal{L}$ which intersects the open line segment $\{(x,\tau):\tau\in (0,1)\}$. At the intersection point $(x,\tau)$ we then have that $$\alpha_{\lambda}(x)=\Phi(x,\tau)+\lambda\ln|\tau|^2.$$ On $\mathcal{L},$ $dd^c\Phi=-\pi^*\omega$ and thus the function $$\alpha_{\lambda}(x)-\Phi(x,\tau)-\lambda\ln|\tau|^2$$ restricted to $\mathcal{L}$ is thus subharmonic, bounded from above by zero and equal to zero at an interior point $(x,\tau),$ and thus must be equal to zero. Therefore
\begin{equation} \label{envelopeequal}
\alpha_{\lambda}(x)=\Phi(x,\tau)+\lambda\ln|\tau|^2
\end{equation}
on $H^{-1}(\lambda)$ and
\begin{equation} \label{harmonicity}
dd^c\alpha_{\lambda}(x,\tau)=-\pi_X^*\omega
\end{equation}
on $\mathcal{L}$. This now shows that $\alpha_{\lambda}=\psi_{\lambda}$ is smooth on $S_{\lambda}^c\setminus Y$. If we then project the leaf $\mathcal{L}$ to $X$ we get a holomorphic disc $\mathcal{L}_x$ passing through $Y$ which attaches to $\partial S_{\lambda}$ along its boundary. Also because of (\ref{harmonicity}) we have that $\omega+dd^c\psi_{\lambda}$ vanishes on $\mathcal{L}_x$. It is clear that these discs foliate the blowup of $S_{\lambda}^c$ along $Y$. To calculate the area of one of these discs $\mathcal{L}_x$ we first observe that $$\int_{\mathcal{L}_x}\omega=\int_{T^{-1}(\mathcal{L}_x)}\omega_{N_Y}.$$ Now $T^{-1}(\mathcal{L}_x)$ is not necessarily a holomorphic disc in $N_Y$ but the boundary we know is a circle $e^{i\theta}u$ and so the symplectic area is the same as for the holomorphic disc $\{\tau u:\tau \in D\}$. Since $\Omega_{N_Y}$ is $S^1$-invariant and $H=\lambda$ on the boundary of this disc this means that it has symplectic area $\lambda$. 

Finally we will show that $(\omega+dd^c\psi_{\lambda})^{n-1}\neq 0$ on $S_{\lambda}^c\setminus Y$. Pick a complex hyperplane in the tangent space at a point $x\in S_{\lambda}^c\setminus Y$ not containing the tangents of the holomorphic disc going through $x$. This is then the projection of a complex subspace of the tangent space of $\mathcal{N}_Y$ at $(x,\tau),$ and we can choose this subspace to lie in the tangent space of $H^{-1}(\lambda)$. This subspace will not contain the tangents of the leaf going through $(x,\tau),$ and thus $\Omega_{can}^{n-1}$ is a volume form on it. Because of (\ref{envelopeequal}) this implies that $(\omega+dd^c\psi_{\lambda})^{n-1}\neq 0$.   
\end{proof}

\begin{remark}\label{rmk:tubularequilibrium}
  From the above we see that in fact the tubular neighbourhoods we have constructed in the previous section (or more precisely their image in $X$) are precisely the complement of the equilibrium set, that is  $\tilde{U}_{\lambda}=S_{\lambda}^c$ where $\tilde{U}_{\lambda}$ is as defined in \eqref{eq:deftildeU} (to see this combine \eqref{eq:psilambdaequalsalphalambda} and Proposition 5.3).   In fact the proof shows even more, namely that the pair $(S_{\lambda}^c\setminus
  Y,\omega+dd^c\psi_{\lambda})$ is canonically identified with the
  (semi)symplectic quotient $H^{-1}(\lambda)/S^1$ of $(V_{can}\setminus
  \mathcal{Y},\Omega_{can})$.
\end{remark}

When $Y$ is just a point Theorem \ref{optimalregthm} reduces to the following local statement.

\begin{theorem}
Let $\phi$ be a smooth strictly plurisubharmonic function on the unit ball $B$ in $\mathbb{C}^n$. For small $\lambda$ (i.e. $\lambda<\Lambda_{can}$) the envelope $$\psi_{\lambda}:=\sup\{\psi\in PSH(B): \psi\leq \phi, \nu_0(\psi)\geq \lambda\}$$ has optimal regularity. Also, for any bounded holomorphic function $f$ on $$B_{\lambda}:=S_{\lambda}^c$$ we have that $$\frac{1}{\textrm{vol}(B_{\lambda})}\int_{B_{\lambda}}fdV_{\phi}=f(0),$$ where the $dV_{\phi}$ denotes the Monge-Ampere measure $(dd^c\phi)^n/n!$
\end{theorem}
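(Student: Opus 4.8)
The plan is to specialise the preceding theory to $Y=\{0\}\subset(B,dd^c\phi)$ and then to read the reproducing identity off from the concentration of the Monge--Amp\`ere measure of the desingularised envelope at the origin.

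That $\psi_\lambda$ has optimal regularity is Theorem~\ref{optimalregthm} applied to the data $Y=\{0\}\subset(B,dd^c\phi)$: the construction of the canonical HMAE solution on the deformation to the normal cone of $0\in B$, its transverse Monge--Amp\`ere foliation and the Legendre transform of its potential carry over verbatim, the only change being the bookkeeping of the logarithmic pole $-\lambda\ln|z|^2$. So it remains to establish the reproducing formula. Put $u:=\psi_\lambda+\lambda\ln|z|^2$. By the definition of $\psi_\lambda$, the function $u$ is plurisubharmonic on $B$, satisfies $u\le\phi$, and $u=\phi$ exactly on $S_\lambda$; since $B_\lambda\Subset B$ for small $\lambda$, $u=\phi$ on a neighbourhood of $\partial B$. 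From the optimal regularity I will use: $\partial S_\lambda$ is a smooth hypersurface across which $u$ is of class $C^{1,1}$; on $B_\lambda\setminus\{0\}$ the current $dd^cu$ restricts to $0$ on the foliating holomorphic discs away from the origin, hence has rank $\le n-1$ there and $(dd^cu)^n=0$ on $B_\lambda\setminus\{0\}$; and $u$ carries a genuine logarithmic pole of weight $\lambda$ at $0$ (the $\lambda\ln|z|^2$ term, $\psi_\lambda$ being locally bounded near $0$), so that $(dd^cu)^n$ restricted to $B_\lambda$ is a positive point mass $c\,\delta_0$ at the origin.

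To get the reproducing formula I would compare the Monge--Amp\`ere measures of $\phi$ and of $u$ over $B_\lambda$ by integrating $f$ against
$$(dd^c\phi)^n-(dd^cu)^n=dd^c(\phi-u)\wedge T,\qquad T:=\sum_{k=0}^{n-1}(dd^c\phi)^k\wedge(dd^cu)^{n-1-k},$$
with $T$ a closed positive current. After removing a small ball about $0$ and a thin collar of $\partial S_\lambda$ I would integrate by parts: the bulk term $\int(\phi-u)\,dd^cf\wedge T$ vanishes because $f$ is holomorphic, i.e.\ $dd^cf=0$; the boundary term along the collar tends to $0$, since $u=\phi$ on the $S_\lambda$-side together with $u\in C^{1,1}$ force $\phi-u=O(\dist(\cdot,\partial S_\lambda)^2)$ with $|d^c(\phi-u)|=O(\dist)$ while $|f|$ is bounded and $|d^cf|=O(1/\dist)$ by the Cauchy estimates; and the boundary term on the small sphere converges, by the Lelong--Jensen formula applied to the logarithmic pole of $u$, to $c\,f(0)$. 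Since $(dd^cu)^n$ vanishes on $B_\lambda\setminus\{0\}$, passing to the limit gives $\int_{B_\lambda}f\,(dd^c\phi)^n=c\,f(0)$ for every bounded holomorphic $f$ on $B_\lambda$. Taking $f\equiv1$ identifies $c=\int_{B_\lambda}(dd^c\phi)^n=n!\,\vol(B_\lambda)$ (so in fact $\vol(B_\lambda)=\lambda^n/n!$ up to the normalisation of the Monge--Amp\`ere operator, in keeping with the foliating discs having area $\lambda$); dividing yields $\vol(B_\lambda)^{-1}\int_{B_\lambda}f\,(dd^c\phi)^n/n!=f(0)$, which is the asserted reproducing property.

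I expect the only real difficulty to be making this integration by parts rigorous at the two troublesome places: the logarithmic singularity of $u$ at $0$ --- handled by the excision of a small ball (or equivalently by a monotone truncation of $u$), using that the pole contributes exactly the point mass $c\,\delta_0$ --- and the vanishing of the collar boundary term, which is precisely where the optimal regularity of Theorem~\ref{optimalregthm} (smoothness of $\partial S_\lambda$ and $C^{1,1}$-regularity of the envelope up to it) is genuinely used.
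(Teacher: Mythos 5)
Your treatment of the optimal regularity part coincides with the paper's (both are a direct specialisation of Theorem \ref{optimalregthm2} to $Y=\{0\}\subset(B,dd^c\phi)$), but your proof of the reproducing identity takes a genuinely different route. The paper pulls the integral back under the canonical tubular neighbourhood $T\colon U\to B$: since $T^{*}dd^{c}\phi$ and $T^{-1}(B_{\lambda})=H^{-1}([0,\lambda))\cap U$ are $S^{1}$-invariant, Fubini reduces everything to circle averages of $f\circ T$, and each average equals $f(0)$ by the mean value property because $\tau\mapsto T(\tau u)$ extends to a holomorphic disc centred at the origin. That argument is short and involves no boundary estimates, but it uses the full tubular-neighbourhood package. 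Yours is instead a Monge--Amp\`ere mass comparison: with $u=\psi_{\lambda}+\lambda\ln|z|^{2}$ (which is $\phi+\alpha_{\lambda}$ in the paper's notation, so $dd^{c}u$ does kill the foliating discs and $(dd^{c}u)^{n}=c\,\delta_{0}$ on $B_{\lambda}$) you transfer holomorphic moments from $(dd^{c}u)^{n}$ to $(dd^{c}\phi)^{n}$ via the telescoping identity and two integrations by parts. This makes explicit, in all dimensions, the ``complex moments'' characterisation that the paper only alludes to for the $n=1$ Hele--Shaw case; the price is genuinely delicate boundary analysis.

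Two points must be supplied before your route closes. First, the collar estimate needs $\phi-u=O(\dist(\cdot,\partial S_{\lambda})^{2})$ and $|d(\phi-u)|=O(\dist)$ from the $B_{\lambda}$ side; the paper's optimal regularity gives smoothness of $\psi_{\lambda}$ only on the open set $S_{\lambda}^{c}\setminus Y$ and smoothness of the hypersurface $\partial S_{\lambda}$, not $C^{1,1}$ regularity of $u$ across $\partial S_{\lambda}$. You must either adapt Berman's $C^{1,1}$ theorem (stated in the projective setting) to this local situation, or extract the two-sided bound from the identity $\psi_{\lambda}=\alpha_{\lambda}$ together with the Legendre-transform and foliation description, which shows that the first derivatives of $\phi-u$ vanish on $\partial S_{\lambda}$ and that the second derivatives remain bounded from inside. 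Second, in the excision of $B_{\epsilon}$ the boundary terms on $\partial B_{\epsilon}$ involve all the mixed currents $d^{c}u\wedge(dd^{c}\phi)^{k}\wedge(dd^{c}u)^{n-1-k}$, and you need that only the pure term survives in the limit; this holds because $(dd^{c}u)^{j}$ charges no point mass at $0$ for $j\le n-1$ (its trace measure is $O(|z|^{-2j})\,dV$, locally integrable), so the limit is $c\,f(0)$ with the single constant $c=(dd^{c}u)^{n}(\{0\})$, identified as $n!\,\vol(B_{\lambda})$ by taking $f\equiv1$. With these two points made precise your argument is correct; it is longer than the paper's, but it yields the moment characterisation of $B_{\lambda}$ as a by-product.
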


\begin{proof}
It immediately follows from Theorem \ref{optimalregthm2} that $\psi_{\lambda}$ has optimal regularity as long as $\lambda$ is small ($\lambda<\Lambda_{can}$).

Now let $f$ be a bounded holomorphic function on $B_{\lambda}$. Also let $T:U\to B$ be the canonical tubular neighbourhood of $0$ in $(B,dd^c\phi),$ thus $U$ is an $S^1$-invariant neighbourhood of the origin in $T_0 B\cong \mathbb{C}^n,$ and $\tilde{\omega}:=T^*dd^c\phi$ is K\"ahler and invariant. We also note that $T^{-1}(B_{\lambda})=H^{-1}([0,\lambda))\cap U$ and so this is also invariant. Now pulling back the integral we get that
\begin{eqnarray*}
\int_{B_{\lambda}}fdV_{\phi}=\int_{\{H\leq \lambda\}}f( T(z))\tilde{\omega}^n=\int_{\tau\in S^1}(\int_{\{H\leq \lambda\}}f(T(\tau z)\tilde{\omega}^n)d\tau= \\ =\int_{\{H\leq \lambda\}}(\int_{\tau\in S^1}f(T(\tau z))d\tau)\tilde{\omega}^n.
\end{eqnarray*}
Since the map $\tau \mapsto T(\tau z)$ extends to a holomorphic disc in $B$ passing through the origin at its centre it follows from the mean value property that $$\int_{\tau\in S^1}f(T(\tau z))d\tau=f(0),$$ at which point we are done.
\end{proof} 

\section{Volume of Tubular Neighbourhoods}

\begin{theorem}\label{thm:volumegrowthrepeat}
Assume $X$ is compact, and let $p\colon \tilde{X}\to X$ denote the blowup of $X$ along $Y$ with exceptional divisor $E$.  Then the volume of the tubular neighbourhood $T_{\lambda}$ is
$$Vol_{\lambda}:= \int_{\tilde{U}_{\lambda}} \frac{\omega^n}{n!} = \frac{1}{n!}\left(\int_X [\omega]^{n} - \int_{\tilde{X}} (p^*[\omega] -\lambda [E])^n\right)$$
where $[E]$ denotes the class of the current of integration along $E$.  
\end{theorem}
\begin{proof}
Let $\omega_E$ be a smooth closed form cohomologuous to $[E]$ and $\alpha_E$ be an $\omega_E$-psh function such that $dd^c\alpha_E+\omega_E=[E]$. If $z_i$ are local coordinates and $E$ is locally defined by the equation $z_1=0$ then $$\alpha_E=\ln|z_1|+O(1).$$ 

The fact that $\psi_{\lambda}=\alpha_{\lambda}$, see \eqref{eq:psilambdaequalsalphalambda}, combined with Proposition \ref{prop:alphalambda} and Remark \ref{rmk:alphagrowth} shows that $p^*\psi_{\lambda}=\lambda \ln|z_1|+O(1)$ locally. It thus follows that $p^*\psi_{\lambda}-\lambda\alpha_E$ is a locally bounded $p^*\omega-\lambda \omega_E$-psh function. Thus the class $p^*[\omega]-\lambda [E]$ is nef and $$\int_{\tilde{X}} (p^*[\omega] -\lambda [E])^n=\int_{\tilde{X}}(p^*\omega-\lambda \omega_E+dd^c(p^*\psi_{\lambda}-\lambda\alpha_E))^n.$$ Here the integrand of right hand side is meant to interpreted in the weak sense as a Monge-Amp\`ere measure. It is easy to see that $p^*\psi_{\lambda}-\lambda\alpha_E$ can be characterized as an extremal envelope, namely $$p^*\psi_{\lambda}-\lambda\alpha_E=\sup\{\psi\leq -\lambda\alpha_E: \psi\in PSH(\tilde{X},p^*\omega-\lambda \omega_E)\}.$$ It then follows from  \cite[Corollary 2.5]{BermanDemailly} by Berman-Demailly that $$\int_{\tilde{X}}(p^*\omega-\lambda \omega_E+dd^c(p^*\psi_{\lambda}-\lambda\alpha_E))^n=\int_{p^{-1}(S_{\lambda})}(p^*\omega-\lambda (\omega_E+dd^c\alpha_E))^n,$$ where $S_{\lambda}:=\psi_{\lambda}^{-1}(0)$ and thus $p^{-1}(S_{\lambda})=\{p^*\psi_{\lambda}-\lambda \alpha_E=-\lambda \alpha_E\}$. Clearly $\omega_E+dd^c\alpha_E$ vanishes outside of $E$ and hence on $p^{-1}(S_{\lambda})$ so we get that $$\int_{\tilde{X}} (p^*[\omega] -\lambda [E])^n=\int_{p^{-1}(S_{\lambda})}p^*\omega^n=\int_{S_{\lambda}}\omega^n.$$ Finally since $\tilde{U}_{\lambda}=S_{\lambda}^c$ (see Remark \ref{rmk:tubularequilibrium}) the Theorem follows.
\end{proof}

\noindent {\sc Julius Ross,  Department of Pure Mathematics and Mathematical Statistics, University of Cambridge, UK. \\j.ross@dpmms.cam.ac.uk}\vspace{2mm}\\ 

\noindent{\sc David Witt Nystr\"om, Department of Pure Mathematics and Mathematical Statistics,  University of Cambridge, UK. \\\quad d.wittnystrom@dpmms.cam.ac.uk, danspolitik@gmail.com}

\end{document}